\newtheorem{theorem}{Theorem}[section]
\newtheorem{lemma}[theorem]{Lemma}
\newtheorem{proposition}[theorem]{Proposition}
\newtheorem{remark}[theorem]{Remark}
\newenvironment{proof}[1][Proof]{\textbf{#1.} }{\hfill\rule{0.5em}{0.5em}}
{\catcode`\@=11\global\let\AddToReset=\@addtoreset
\AddToReset{equation}{section}

\AddToReset{theorem}{section}

\begin{document}
\title{Global estimates for quasilinear parabolic equations on Reifenberg flat domains and its applications to  Riccati type parabolic equations with distributional data}
\author{
 {\bf Quoc-Hung Nguyen\thanks{ E-mail address: quoc-hung.nguyen@epfl.ch;~ Hung.Nguyen-Quoc@lmpt.univ-tours.fr}}\\[0.5mm]
{\small \'Ecole Polytechnique F\'ed\'erale de Lausanne }\\
{\small  SB MATHAA CAMA, Station 8, CH-1015 Lausanne, Switzerland}} 
\date{November 21, 2014}  
\maketitle
\begin{abstract}
In this paper,  we prove global weighted Lorentz and Lorentz-Morrey estimates for gradients of solutions to the quasilinear parabolic equations:
$$u_t-\operatorname{div}(A(x,t,\nabla u))=\operatorname{div}(F),$$
 in a bounded domain $\Omega\times (0,T)\subset\mathbb{R}^{N+1}$, under minimal regularity assumptions on the boundary of domain and on nonlinearity $A$. Then results yields existence of a solution to the  Riccati type parabolic equations:
 $$u_t-\operatorname{div}(A(x,t,\nabla u))=|\nabla u|^q+\operatorname{div}(F)+\mu,$$ 
where $q>1$ and $\mu$ is a bounded Radon measure.\\\\
MSC: primary 35K59; secondary 42B37\\\\
Keywords: quasilinear parabolic equations;   maximal potential;  Reifenberg flat domain;  Morrey-Lorentz spaces; Riccati type equations; capacity.
\end{abstract}   
                       
 \section{Introduction and main results} 
 In this article, we are concerned with the global weighted Lorentz space estimates for gradients of  weak solutions to quasilinear parabolic equations in divergence form:
 \begin{equation}\label{5hh070120148}
                       \left\{
                                       \begin{array}
                                       [c]{l}%
                                       {u_{t}}-\operatorname{div}(A(x,t,\nabla u))=\operatorname{
                                       div}(F)~~\text{in }\Omega_T,\\ 
                         u=0~~~~~~~\text{on}~~
                                                                                              \partial_p(\Omega \times (0,T)),
                                                                                                 \\                          
                                       \end{array}
                                       \right.  
                                       \end{equation}   where  $\Omega_T:=\Omega\times (0,T)$ is a bounded open subset of $\mathbb{R}^{N+1}$, $N\geq2$, $ \partial_p(\Omega \times (0,T))=(\partial\Omega\times(0,T))\cup (\Omega\times\{t=0\})$,   $F\in L^1(\Omega_T,\mathbb{R}^N)$ is a given vector field and  the nonlinearity  $A:\mathbb{R}^N\times\mathbb{R}\times \mathbb{R}^N\to \mathbb{R}^N$ is a Carath\'eodory vector valued function, i.e. $A$ is measurable in $(x,t)$ and continuous with respect to $\nabla u$ for a.e. $(x,t)$.\\
                                       We suppose in this paper that $A$ satisfies 
                                       \begin{align}
                                       \label{5hhconda}&|A(x,t,\zeta)|\le \Lambda_1 |\zeta|,
                                       \end{align}
                                       and 
                                       \begin{align}
                                       \label{5hhcondb}
                                       \left \langle A(x,t,\zeta)-A(x,t,\xi) ,\zeta-\xi\right\rangle\geq \Lambda_2 |\zeta-\xi|^2,
                                       \end{align}
                                          for every $(\xi,\zeta)\in \mathbb{R}^N\times \mathbb{R}^N$ and a.e. $(x,t)\in \mathbb{R}^N\times \mathbb{R}$, where  $\Lambda_1$ and $\Lambda_2$ are positive constants. In addtion, we also assume that the derivatives of $A$ with respect to $\zeta$ are bounded, that is, 
                                          \begin{align}\label{181120141}
                                          |A_\zeta(x,t,\zeta)|\leq \Lambda_1, 
                                          \end{align}      for any $\zeta\in \mathbb{R}^N$ and $(x,t)\in \mathbb{R}^N$. We remark that the condition \eqref{181120141} is needed in order to ensure that the reference problems \eqref{5hheq4} and \eqref{5hh1610134} in the next section have $C^{0,1}$ regularity solutions (see \cite{55Li1,55Li2}), which will be used in the sequel. \\          
Throughout the paper, we assume that $A$ satisfies \eqref{5hhconda} and \eqref{5hhcondb}, \eqref{181120141}. Besides, we always denote  $T_0=\text{diam}(\Omega)+T^{1/2}$ and $Q_\rho(x,t)=B_\rho(x)\times (t-\rho^2,t)$ $\tilde{Q}_\rho(x,t)=B_\rho(x)\times (t-\rho^2/2,t+\rho^2/2)$ for $(x,t)\in\mathbb{R}^{N+1}$ and $\rho>0$.\\
A weak solution $u$ of \eqref{5hh070120148} is understood in the standard weak (distributional) sense, that is $u\in L^1(0,T,W_0^{1,1}(\Omega))$ is a weak solution of \eqref{5hh070120148} if 
\begin{align*}
 -\int_{\Omega_T}u \varphi_tdxdt+\int_{\Omega_T}A(x,t,\nabla u)\nabla \varphi dxdt= -\int_{\Omega_T}F\nabla\varphi dxdt
 \end{align*}
 for all $\varphi \in C_c^1([0,T)\times \Omega)$.                               
The existence and uniqueness of weak solutions in $L^2(0,T,H_0^1(\Omega))$ to problem \eqref{5hh070120148} with $F\in L^2(\Omega_T,\mathbb{R}^N)$ is given at the beginning of the next section.\medskip                                            \\          
      For our purpose, we need a condition on $\Omega$ which is expressed in the following way. We say that $\Omega$ is a $(\delta,R_0)-$Reifenberg flat domain for $\delta\in (0,1)$ and $R_0>0$ if for every $x\in\partial \Omega$ and every $r\in(0,R_0]$, there exists a system of coordinates $\{z_1,z_2,...,z_n\}$, which may depend on $r$ and $x$, so that in this coordinate system $x=0$ and that 
         \begin{equation}
         B_r(0)\cap \{z_n>\delta r\}\subset B_r(0)\cap \Omega\subset B_r(0)\cap\{z_n>-\delta r\}.
         \end{equation}
         
 We notice that this class of flat domains is rather wide since it includes $C^1$ domains, Lipschitz domains with sufficiently small Lipschitz constants and even fractal domains. Besides, it has many important roles in the theory of minimal surfaces and free boundary problems. This class appeared first in a work of Reifenberg (see \cite{55Re}) in the context of Plateau problem. Its properties can be found in \cite{55KeTo1,55KeTo2,55To}.

     We also require  that the nonlinearity $A$  satisfies a smallness condition of BMO type in the $x$-variable in the sense that $A(x,t,\zeta)$ satisfies a $(\delta,R_0)$-BMO condition for some $\delta, R_0>0$ with exponent $p>0$ if 
                                   \begin{equation*}
                                   [A]^{R_0}_p:=\mathop {\sup }\limits_{(y,s)\in \mathbb{R}^N\times\mathbb{R},0<r\leq R_0}\left(\fint_{Q_r(y,s)}\left(\Theta(A,B_r(y))(x,t)\right)^pdxdt\right)^{\frac{1}{p}} \leq \delta,
                                   \end{equation*}         
 where 
                                   \begin{equation*}
                                   \Theta(A,B_r(y))(x,t):=\mathop {\sup }\limits_{\zeta\in\mathbb{R}^N\backslash\{0\}}\frac{|A(x,t,\zeta)-\overline{A}_{B_r(y)}(t,\zeta)|}{|\zeta|},
                                   \end{equation*}
                                    and $\overline{A}_{B_r(y)}(t,\zeta)$ is denoted the average of $A(t,.,\zeta)$ over the ball $B_r(y)$, i.e,
                                   \begin{equation*}
                                   \overline{A}_{B_r(y)}(t,\zeta):=\fint_{B_r(y)}A(x,t,\zeta)dx=\frac{1}{|B_r(y)|}\int_{B_r(y)}A(x,t,\zeta)dx.
                                   \end{equation*}                                                              
                                                                      
                                      The above condition appeared in our previous paper \cite{55QH2}. It is easy to  see that the $(\delta,R_0)-$BMO  is satisfied when $A$ is continuous or has small jump discontinuities with respect to $x$.
    We recall that a positive function $w\in L^1_{\text{loc}}(\mathbb{R}^{N+1})$ is called an $\mathbf{A}_{p}$ weight, $1\leq p<\infty$ if there holds
    \begin{align*}
    [w]_{\mathbf{A}_{p}}:= \mathop {\sup }\limits_{\tilde{Q}_\rho(x,t)\subset\mathbb{
    R}^{N+1}}\left(\fint_{\tilde{Q}_\rho(x,t)}w(y,s)dyds
    \right)\left(\fint_{\tilde{Q}_\rho(x,t)}w(y,s)^{-\frac{1}{p-1}}dyds
        \right)^{p-1}<\infty~~\text{when }~p>1,
    \end{align*} 
    \begin{align*}
        [w]_{\mathbf{A}_{1}}:= \mathop {\sup }\limits_{\tilde{Q}_\rho(x,t)\subset\mathbb{
        R}^{N+1}}\left(\fint_{\tilde{Q}_\rho(x,t)}w(y,s)dyds
        \right)\mathop {\operatorname{ess}\sup }\limits_{(y,s)\in \tilde{Q}_\rho(x,t)}\frac{1}{w(y,s)}<\infty~~\text{when }~p=1.
        \end{align*}
     The $[w]_{\mathbf{A}_p}$ is called the $\mathbf{A}_{p}$ constant of $w$. \\                                  
   A positive function $w\in L^1_{\text{loc}}(\mathbb{R}^{N+1})$ is called an $\mathbf{A}_{\infty}$ weight if there are two positive constants $C$ and $\nu$ such that
                                $$w(E)\le C \left(\frac{|E|}{|Q|}\right)^\nu w(Q),
                                $$
                                 for all cylinder $Q=\tilde{Q}_\rho(x,t)$ and all measurable subsets $E$ of $Q$. The pair $(C,\nu) $ is called the $\mathbf{A}_\infty$ constant of $w$ and is denoted by $[w]_{\mathbf{A}_\infty}$.  
  It is well known that this class is the union of $\mathbf{A}_p$ for all $p\in (1,\infty)$, see \cite{55Gra}.\\  
   If $w$ is a weight function belonging to $w\in \mathbf{A}_{\infty}$ and $E\subset \mathbb{R}^{N+1}$ a Borel set, $0<q<\infty$, $0<s\leq\infty$, the weighted Lorentz space $L^{q,s}_w(E)$  is the set of measurable functions $g$ on $E$ such that 
        \begin{equation*}
     ||g||_{L^{q,s}_w(E)}:=\left\{ \begin{array}{l}
              \left(q\int_{0}^{\infty}\left(\rho^qw\left(\{(x,t)\in E:|g(x,t)|>\rho\}\right)\right)^{\frac{s}{q}}\frac{d\rho}{\rho}\right)^{1/s}<\infty~\text{ if }~s<\infty, \\ 
               \sup_{\rho>0}\rho \left( w\left(\{(x,t)\in E:|g(x,t)|>\rho\}\right)\right)^{1/q}<\infty~~\text{ if }~s=\infty. \\ 
               \end{array} \right.
        \end{equation*}              
               Here we write $w(O)=\int_{O}w(x,t)dxdt$ for a measurable set $O\subset \mathbb{R}^{N+1}$.  Obviously,  
               $
                       ||g||_{L^{q,q}_w(E)}=||g||_{L^q_w(E)}
                      $,
                      thus $L^{q,q}_w(E)=L^{q}_w(E)$.               As usual, when $w \equiv 1$  we  write simply $L^{q,s}(E)$ instead of $L^{q,s}_w(E)$.                                                     
               In this paper, $\mathcal{M}$ denotes the parabolic Hardy-Littlewood maximal function defined for each locally integrable function  $f$ in $\mathbb{R}^{N+1}$ by
                     \begin{equation*}
                     \mathcal{M}(f)(x,t)=\sup_{\rho>0}\fint_{\tilde{Q}_\rho(x,t)}|f(y,s)|dyds~~\forall (x,t)\in\mathbb{R}^{N+1}.
                     \end{equation*}
            If $p>1$ and $w\in \mathbf{A}_p$ we verify that $\mathcal{M}$ is operator from $L^1(\mathbb{R}^{N+1})$ into $L^{1,\infty}(\mathbb{R}^{N+1})$ and $L^{p,s}_w(\mathbb{R}^{N+1})$  into itself for $0<s\leq \infty$, see  \cite{55Stein2,55Stein3,55Tur}. 
         \\ We would like to mention that the use
            of the Hardy-Littlewood maximal function in non-linear degenerate problems was started in the elliptic setting by T.  Iwaniec in his fundamental paper \cite{Iwa}. \\
                        We now state the main result of the paper.                                      
                \begin{theorem} \label{101120143}Let $F\in L^2(\Omega_T,\mathbb{R}^N)$. There exists a unique weak solution $u\in  L^2(0,T,H_0^1(\Omega))$ of \eqref{5hh070120148}. For any $w\in \mathbf{A}_{\infty}$, $0< q<\infty$, $0<s\leq\infty$ we find  $\delta=\delta(N,\Lambda_1,\Lambda_2,q,s, [w]_{\mathbf{A}_{\infty}})\in (0,1)$ and $s_0=s_0(N,\Lambda_1,\Lambda_2)>0$ such that if $\Omega$ is  $(\delta,R_0)$-Reifenberg flat domain $\Omega$ and $[A]_{s_0}^{R_0}\le \delta$ for some $R_0>0$ then                                       
                  \begin{equation}\label{101120144}
                              ||\mathcal{M}(|\nabla u|^2)||_{L^{q,s}_w(\Omega_T)}\leq C ||\mathcal{M}(|F|^2)||_{L^{q,s}_w(\Omega_T)}.
                                       \end{equation} 
                                        Here $C$ depends only  on $N,\Lambda_1,\Lambda_2,q,s, [w]_{\mathbf{A}_\infty}$ and $T_0/R_0$.                       
                                      \end{theorem}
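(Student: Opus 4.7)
The plan has four stages. First, the existence and uniqueness of a weak solution $u\in L^2(0,T;H_0^1(\Omega))$ follows from the standard theory of monotone parabolic operators: conditions \eqref{5hhconda}--\eqref{5hhcondb} make $\zeta\mapsto A(x,t,\zeta)$ strictly monotone, coercive and of linear growth, so a Galerkin approximation together with Minty's trick yields the unique weak solution and the basic energy estimate $\|\nabla u\|_{L^2(\Omega_T)}^2\leq C\|F\|_{L^2(\Omega_T)}^2$. The bulk of the work lies in the gradient estimate \eqref{101120144}, for which I would adapt the Caffarelli--Peral / Byun--Wang / Mingione level-set strategy to this parabolic weighted-Lorentz setting.

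The heart of the argument is a pair of comparison estimates on small parabolic cylinders. Fix $(x_0,t_0)\in\overline{\Omega_T}$ and $4\rho\leq R_0$. First replace $u$ on $Q_{4\rho}(x_0,t_0)\cap\Omega_T$ by the weak solution $v$ of the same equation with $F\equiv 0$ and matching Cauchy--Dirichlet data; an energy calculation using \eqref{5hhcondb} yields $\fint_{Q_{4\rho}}|\nabla u-\nabla v|^2\,dx\,dt\leq C\fint_{Q_{4\rho}}|F|^2\,dx\,dt$. Then compare $v$ to the solution $h$ of the $x$-frozen problem $h_t-\operatorname{div}(\overline{A}_{B_{2\rho}}(t,\nabla h))=0$ with boundary data $v$; combining the small-BMO hypothesis $[A]_{s_0}^{R_0}\leq\delta$ with a Gehring-type self-improving reverse H\"older inequality produces
\[ \fint_{Q_{2\rho}}|\nabla v-\nabla h|^2\,dx\,dt\leq C\delta^{2\sigma}\fint_{Q_{4\rho}}|\nabla u|^2\,dx\,dt \]
for some $\sigma=\sigma(N,\Lambda_1,\Lambda_2)>0$ and threshold $s_0=s_0(N,\Lambda_1,\Lambda_2)>0$. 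The bounded-derivative condition \eqref{181120141} now triggers the Lipschitz regularity of \cite{55Li1,55Li2}, giving $\|\nabla h\|_{L^\infty(Q_\rho)}^2\leq C\fint_{Q_{2\rho}}|\nabla h|^2\,dx\,dt$; in boundary cylinders, Reifenberg flatness with $\delta$ small permits the same bound up to the (almost flat) boundary after flattening coordinates.

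These comparison estimates drive a Vitali-covering / good-$\lambda$ scheme. One obtains constants $A_0>1$ and $C_*>0$ such that for every $\epsilon\in(0,1)$ the choice $\delta=\delta(\epsilon)$ gives
\[ \bigl|\{\mathcal{M}(|\nabla u|^2)>A_0\lambda\}\cap Q\bigr|\leq C_*\epsilon\,|Q| \]
for every Vitali cylinder $Q\subset\Omega_T$ intersecting $\{\mathcal{M}(|\nabla u|^2)\leq\lambda\}\cap\{\mathcal{M}(|F|^2)\leq\epsilon\lambda\}$. Because $w\in\mathbf{A}_\infty$, this Lebesgue density bound upgrades, with exponent $\nu=\nu([w]_{\mathbf{A}_\infty})$, to
\[ w\bigl(\{\mathcal{M}(|\nabla u|^2)>A_0\lambda\}\cap\Omega_T\bigr)\leq C(C_*\epsilon)^\nu\,w\bigl(\{\mathcal{M}(|\nabla u|^2)>\lambda\}\cap\Omega_T\bigr)+w\bigl(\{\mathcal{M}(|F|^2)>\epsilon\lambda\}\cap\Omega_T\bigr). \]
Inserting this good-$\lambda$ inequality into the $L^{q,s}_w$ quasinorm via a layer-cake / Fubini manipulation yields
\[ \|\mathcal{M}(|\nabla u|^2)\|_{L^{q,s}_w(\Omega_T)}\leq C_1(C_*\epsilon)^{\nu/q}\|\mathcal{M}(|\nabla u|^2)\|_{L^{q,s}_w(\Omega_T)}+C_2(\epsilon)\|\mathcal{M}(|F|^2)\|_{L^{q,s}_w(\Omega_T)}, \]
where $C_1$ depends on $A_0,q,s,[w]_{\mathbf{A}_\infty}$ and $C_2(\epsilon)=O(\epsilon^{-1})$; choosing $\epsilon$, and hence $\delta$, small enough in terms of $q,s,[w]_{\mathbf{A}_\infty}$ so that $C_1(C_*\epsilon)^{\nu/q}\leq 1/2$ permits absorption and establishes \eqref{101120144}.

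The main obstacle I anticipate is the boundary comparison step: one must simultaneously flatten the Reifenberg portion of $\partial\Omega$, freeze the nonlinearity in $x$, and derive a quantitative up-to-the-boundary Lipschitz bound for $h$ on a half-cylinder with explicit control on how the constants degenerate in $\delta$, so that the density estimate survives intact. The interior comparison is comparatively routine once \eqref{181120141} is invoked, and the final Lorentz-norm absorption is pure harmonic analysis; it is the delicate up-to-the-boundary regularity--where Reifenberg flatness, small BMO, and the a priori $C^{0,1}$ theory of \cite{55Li1,55Li2} must all be coupled--that will demand the most technical work.
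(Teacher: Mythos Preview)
Your proposal is correct and follows essentially the same route as the paper: two-step comparison (remove $F$, then freeze in $x$ using Gehring's reverse H\"older), interior/boundary Lipschitz bounds for the frozen problem via \cite{55Li1,55Li2}, a Vitali-based good-$\lambda$ inequality upgraded through the $\mathbf{A}_\infty$ condition, and absorption in the $L^{q,s}_w$ quasinorm. The only notable difference is in the boundary step: rather than flattening coordinates, the paper compares the boundary solution $v$ directly to a solution $V$ of the frozen problem on an exact half-cylinder $Q_\rho^+$ (Lemma~\ref{5hh21101314}, imported from \cite{55QH2}), using Reifenberg flatness to control the discrepancy between $\Omega\cap B_\rho$ and $B_\rho^+$; this avoids the change of variables you anticipate but is otherwise the same mechanism.
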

                                                                                               Since $\mathcal{M}$ is a bounded operator from  $L^{p,s}_w(\mathbb{R}^{N+1})$  into itself for $p>1,0<s\leq \infty$ and $w\in\mathbf{A}_{p}$, thus we obtain the following Theorem.  
        \begin{theorem} \label{101120141} Let $F\in L^2(\Omega_T,\mathbb{R}^N)$ and $s_0$ be in Theorem \ref{101120143}. There exists a unique weak solution $u\in  L^2(0,T,H_0^1(\Omega))$ of \eqref{5hh070120148}. For any $w\in \mathbf{A}_{q/2}$, $2< q<\infty$, $0<s\leq\infty$ we find  $\delta=\delta(N,\Lambda_1,\Lambda_2,q,s, [w]_{\mathbf{A}_{q/2}})\in (0,1)$ and such that if $\Omega$ is  $(\delta,R_0)$-Reifenberg flat domain  and $[A]_{s_0}^{R_0}\le \delta$ for some $R_0>0$ then                          
                                                                                           \begin{equation}\label{101120146}
                                                                                                                                   |||\nabla u|||_{L^{q,s}_w(\Omega_T)}\leq C |||F|||_{L^{q,s}_w(\Omega_T)}.
                                                                                                                                   \end{equation} 
                                                                                                                                    Here $C$ depends only on $N,\Lambda_1,\Lambda_2,q,s, [w]_{\mathbf{A}_{q/2}}$ and $T_0/R_0$.                       
                                                                                                                                  \end{theorem}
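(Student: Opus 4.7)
The statement is essentially a corollary of Theorem~\ref{101120143}: once $\mathcal{M}(|\nabla u|^{2})$ has been dominated by $\mathcal{M}(|F|^{2})$ in every weighted Lorentz space with $\mathbf{A}_\infty$ weight, one can trade $|\nabla u|^{2}$ for $\mathcal{M}(|\nabla u|^{2})$ on the left-hand side pointwise, and trade $\mathcal{M}(|F|^{2})$ for $|F|^{2}$ on the right-hand side by Muckenhoupt's weighted maximal inequality. The plan is to apply Theorem~\ref{101120143} with the exponent pair $(q/2,s/2)$ and then invoke the $L^{q/2,s/2}_w$-boundedness of $\mathcal{M}$, which is available precisely because $w\in\mathbf{A}_{q/2}$ with $q/2>1$.

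First I would record the rescaling identity
$$\|g\|_{L^{q,s}_w(\Omega_T)}^{2}=\|g^{2}\|_{L^{q/2,s/2}_w(\Omega_T)}$$
valid for any non-negative measurable $g$, any $q\in(0,\infty)$ and any $s\in(0,\infty]$. For $s<\infty$ this is the substitution $\rho=\sigma^{1/2}$ inside the layer-cake formula recalled in the excerpt; for $s=\infty$ it follows equally directly from the supremum definition. Applied to $g=|\nabla u|$ and $g=|F|$, estimate \eqref{101120146} is reduced to
$$\||\nabla u|^{2}\|_{L^{q/2,s/2}_w(\Omega_T)}\le C\||F|^{2}\|_{L^{q/2,s/2}_w(\Omega_T)}.$$

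To prove this reduced inequality I would chain three facts. By Lebesgue's differentiation theorem, $|\nabla u|^{2}\le \mathcal{M}(|\nabla u|^{2})$ almost everywhere, so the left-hand side is controlled by $\|\mathcal{M}(|\nabla u|^{2})\|_{L^{q/2,s/2}_w(\Omega_T)}$. Next, since $q>2$ implies $q/2>1$ and every $\mathbf{A}_{q/2}$ weight lies in $\mathbf{A}_\infty$ with $[w]_{\mathbf{A}_\infty}$ controlled by $[w]_{\mathbf{A}_{q/2}}$, Theorem~\ref{101120143} applied with $(q,s)$ replaced by $(q/2,s/2)$ bounds this in turn by $C\|\mathcal{M}(|F|^{2})\|_{L^{q/2,s/2}_w(\Omega_T)}$, provided $\delta$ is chosen small enough in terms of $N,\Lambda_1,\Lambda_2,q,s,[w]_{\mathbf{A}_{q/2}}$. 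Finally, since $w\in\mathbf{A}_{q/2}$ with $q/2>1$, the Muckenhoupt-type boundedness of $\mathcal{M}$ on $L^{q/2,s/2}_w(\mathbb{R}^{N+1})$ (recalled in the excerpt) controls this last quantity by $C\||F|^{2}\|_{L^{q/2,s/2}_w(\Omega_T)}$. Taking a square root recovers \eqref{101120146}.

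Given Theorem~\ref{101120143}, there is no substantive obstacle; the argument is a routine concatenation of three well-known facts. The only housekeeping is to verify that the threshold $\delta$ produced by Theorem~\ref{101120143} (which is a priori quantified in terms of $[w]_{\mathbf{A}_\infty}$) and the operator norm of $\mathcal{M}$ on $L^{q/2,s/2}_w$ can both be expressed in terms of $[w]_{\mathbf{A}_{q/2}}$ alone, so that the threshold $\delta$ promised by Theorem~\ref{101120141} depends only on the listed data $N,\Lambda_1,\Lambda_2,q,s,[w]_{\mathbf{A}_{q/2}}$.
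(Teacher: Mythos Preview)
Your proposal is correct and follows exactly the route the paper takes: the paper states Theorem~\ref{101120141} as an immediate consequence of Theorem~\ref{101120143} together with the boundedness of $\mathcal{M}$ on $L^{p,s}_w$ for $w\in\mathbf{A}_p$, $p>1$, and your write-up simply spells out the rescaling $\|g\|_{L^{q,s}_w}^2=\|g^2\|_{L^{q/2,s/2}_w}$ and the pointwise bound $|\nabla u|^2\le\mathcal{M}(|\nabla u|^2)$ that make this deduction work.
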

 We remark that the global gradient estimates of solutions of \eqref{5hh070120148} obtained in Theorem \ref{101120141} extend results in \cite{55BW1,55BW4,55BOR} to more general nonlinear structure and in the setting of weighted Lorentz spaces. Notice that Theorem \ref{101120143} and  \ref{101120141} in the quasilinear elliptic framework are obtained in \cite{55MePh3}. \\
In the linear case, we obtain global estimates for gradients of weak solutions to problem  
\begin{equation}\label{161120143}
                       \left\{
                                       \begin{array}
                                       [c]{l}%
                                       {u_{t}}-\operatorname{div}(A(x,t,\nabla u))=\operatorname{div}(F)+\mu~~\text{in }\Omega_T,\\ 
                         u=0~~~~~~~\text{on}~~
                                                                                              \partial\Omega \times (0,T),\\
                                                                                              u(0)=\sigma~~~\text{in }~\Omega,
                                                                                                 \\                          
                                       \end{array}
                                       \right.  
                                       \end{equation}    
       where       $F\in L^1(\Omega_T,\mathbb{R}^N)$, $\mu\in\mathfrak{M}_b(\Omega_T)$ the set of bounded Radon measure in $\Omega_T$, $\sigma\in\mathfrak{M}_b(\Omega)$ the set of bounded Radon measure in $\Omega$.                                                                                                          
  \begin{theorem} \label{161120141}  Suppose that $A$ is linear. Let $F\in L^1(\Omega_T,\mathbb{R}^N), \mu\in\mathfrak{M}_b(\Omega_T), \sigma\in\mathfrak{M}_b(\Omega)$, set $\omega=|\mu|+|\sigma|\otimes\delta_{\{t=0\}}$. Let $s_0$ be as in Theorem \ref{101120143}.
   \begin{description}
   \item[a.] For any $q>2, 0<s\leq\infty$, $w\in \mathbf{A}_{q/2}$ and $\mathcal{M}_1[\omega], |F|\in L^{q,s}_w(\Omega_T)$ we find a $\delta=\delta(N,\Lambda_1,\Lambda_2,q,s, [w]_{\mathbf{A}_{q/2}})\in (0,1)$ such that if $\Omega$ is  a $(\delta,R_0)$-Reifenberg flat domain  and $[A]_{s_0}^{R_0}\le \delta$ for some $R_0>0$ there exists a unique weak solution $u\in  L^2(0,T,H_0^{1}(\Omega))$ of \eqref{161120143} and there holds                     
                                                                                                                                      \begin{equation}\label{161120144}
                                                                                                                                      |||\nabla u|||_{L^{q,s}_w(\Omega_T)}\leq C ||\mathcal{M}_1[\omega]||_{L^{q,s}_w(\Omega_T)}+ C |||F|||_{L^{q,s}_w(\Omega_T)},
                                                                                                                                      \end{equation} 
                                                                                                                                       where $C$ depends  only on $N,\Lambda_1,\Lambda_2,q,s, [w]_{\mathbf{A}_{q/2}}$ and $T_0/R_0$.
      \item[b.] For any $\varepsilon\in (0,1), \frac{2(\varepsilon+1)}{\varepsilon+2}<q\leq 2, 0<s\leq \infty$, $w^{2+\varepsilon}\in \mathbf{A}_{1}$ and $\mathcal{M}_1[\omega], |F|\in L^{q,s}_w(\Omega_T)$ we find a $\delta=\delta(N,\Lambda_1,\Lambda_2,q,s,\varepsilon, [w^{2+\varepsilon}]_{\mathbf{A}_{1}})\in (0,1)$ such that if $\Omega$ is a $(\delta,R_0)$-flat domain  and $[A]_{s_0}^{R_0}\le \delta$ for some $R_0>0$ there exists a unique weak solution $u\in  L^{\frac{2(\varepsilon+1)}{\varepsilon+2}}(0,T,W_0^{1,\frac{2(\varepsilon+1)}{\varepsilon+2}}(\Omega))$ of \eqref{161120143} and there holds                            
                                                                                                                                         \begin{equation}\label{161120142}
                                                                                                                                         |||\nabla u|||_{L^{q,s}_w(\Omega_T)}\leq C ||\mathcal{M}_1[\omega]||_{L^{q,s}_w(\Omega_T)}+ C |||F|||_{L^{q,s}_w(\Omega_T)},
                                                                                                                                         \end{equation} 
                                                                                                                          where $C$ depends only  on $N,\Lambda_1,\Lambda_2,q,s,\varepsilon, [w^{2+\varepsilon}]_{\mathbf{A}_{1}}$ and $T_0/R_0$.
                       
   \end{description}                  
                                                                                                                                  \end{theorem}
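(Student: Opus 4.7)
The plan is to exploit the linearity of $A$ to split the solution according to the data, apply Theorem~\ref{101120141} to the $F$-part, and control the contribution of the measure datum $\omega=|\mu|+|\sigma|\otimes\delta_{\{t=0\}}$ through the parabolic maximal potential $\mathcal{M}_1[\omega]$ via a pointwise gradient--potential estimate. By linearity write $u=u_1+u_2$, where $u_1$ solves \eqref{161120143} with $\mu=\sigma=0$ and $u_2$ solves it with $F\equiv 0$. For $u_1$, Theorem~\ref{101120141} directly yields $\||\nabla u_1|\|_{L^{q,s}_w(\Omega_T)}\le C\||F|\|_{L^{q,s}_w(\Omega_T)}$ in case (a). In case (b) the exponent $q$ lies below the natural duality threshold, so one must re-run the covering/comparison scheme behind Theorem~\ref{101120143} in the $W^{1,\frac{2(\varepsilon+1)}{\varepsilon+2}}$ setting; the hypothesis $w^{2+\varepsilon}\in\mathbf{A}_1$ supplies the reverse-H\"older and extrapolation input needed to transfer the good-$\lambda$ estimate down to this sub-natural scale.

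For $u_2$, regularize $\omega$ by a parabolic mollification $\omega_n\in C_c^\infty(\mathbb{R}^{N+1})$, chosen so that $\omega_n\to\omega$ narrowly, $\|\omega_n\|_{L^1}\le\|\omega\|$, and $\mathcal{M}_1[\omega_n]\le c\,\mathcal{M}_1[\omega]$ pointwise. The smoothed problem then admits an energy solution $u_{2,n}\in L^2(0,T;H_0^1(\Omega))$ given by the Green's-function representation for the linear operator $\partial_t-\operatorname{div}(A\nabla\cdot)$ on $\Omega_T$. Combining this representation with Aronson--Nash type Gaussian bounds for $\nabla_x G(x,t;y,s)$, adapted to $(\delta,R_0)$-Reifenberg flat domains and $(\delta,R_0)$-BMO coefficients, produces the crucial pointwise estimate
\[
|\nabla u_{2,n}(x,t)|\le C\,\mathcal{M}_1[\omega_n](x,t)\qquad\text{for a.e.\ }(x,t)\in\Omega_T,
\]
which, after taking $L^{q,s}_w$-norms, gives a bound by $C\|\mathcal{M}_1[\omega]\|_{L^{q,s}_w(\Omega_T)}$ uniform in $n$. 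A compactness argument---energy compactness in case (a), and a SOLA/renormalized-solutions framework in case (b)---then extracts $u_2$ as a limit of the $u_{2,n}$, linearity of $A$ supplies uniqueness of this limit, and the estimate passes to the limit by lower semicontinuity of the Lorentz (quasi)norm. Adding the two pieces yields~\eqref{161120144} and~\eqref{161120142}.

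The main obstacle is the pointwise gradient--potential bound of the second paragraph: classical Aronson/Nash estimates control $G$ itself, while the bound on $\nabla_x G$ that encodes $\mathcal{M}_1$ requires both the $(\delta,R_0)$-BMO smallness of $A$ in the $x$-variable and the geometric Reifenberg flatness of $\partial\Omega$, used jointly to freeze coefficients and flatten the boundary on each small cylinder and compare with a reference problem whose Green's function does satisfy a classical Gaussian gradient bound. A secondary but non-trivial difficulty lies in case (b): the exponent $\tfrac{2(\varepsilon+1)}{\varepsilon+2}<2$ rules out direct energy estimates, so one must define $\nabla u_2$ and identify it as the limit of $\nabla u_{2,n}$ through the SOLA framework, where the linearity of $A$ is essential both for the initial splitting and for uniqueness of the renormalized solution.
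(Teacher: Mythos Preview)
Your overall architecture (split $u=u_1+u_2$ by linearity, treat the $F$-part via Theorem~\ref{101120141}, treat the measure part via $\mathcal{M}_1[\omega]$) matches the paper, but two of the load-bearing steps you propose do not go through as stated.

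\medskip

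\textbf{The pointwise bound $|\nabla u_{2,n}|\le C\,\mathcal{M}_1[\omega_n]$ is not available under the hypotheses.} You want to obtain it from Gaussian-type bounds on $\nabla_x G$. Aronson--Nash estimates control $G$ itself under merely bounded measurable coefficients, but a pointwise gradient bound of the form $|\nabla_x G(x,t;y,s)|\lesssim (t-s)^{-(N+1)/2}\exp(-c|x-y|^2/(t-s))$ is a Lipschitz statement for solutions and \emph{fails} when $A$ is only $(\delta,R_0)$-BMO in $x$: the freeze-and-compare mechanism you invoke is precisely the machinery of Section~2, and what it delivers (Theorems~\ref{5hh24092} and~\ref{5hh16101310}) are $L^2$-level comparison estimates with an error proportional to $[A]_{s_0}^{R_0}$, not $L^\infty$ gradient control of the original solution. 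Specializing your claimed pointwise estimate to $\omega=\delta_{(y,s)}$ would give exactly the Lipschitz Green's-function bound that small BMO does not provide. The paper does not attempt a pointwise estimate here; it quotes a weighted-Lorentz \emph{norm} estimate for the measure problem (the analogue of Theorem~\ref{101120143} with $\mathcal{M}(|F|^2)$ replaced by $(\mathcal{M}_1[\omega])^2$), obtained in \cite{55QH2} by a good-$\lambda$ argument at the level of distribution functions. That is the ingredient you should be reaching for instead of a pointwise inequality.

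\medskip

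\textbf{Case (b) for the $F$-part needs duality, not a direct rerun of the good-$\lambda$ scheme.} You propose to ``re-run the covering/comparison scheme in the $W^{1,p}$ setting'' with $p=\tfrac{2(\varepsilon+1)}{\varepsilon+2}<2$. But every comparison estimate in Section~2 is $L^2$-based (they come from testing the equation with $u-w$), and there is no a priori $L^p$ energy inequality for $p<2$ to start the Vitali argument. The paper's route is different and uses linearity in an essential second way: one passes to the \emph{adjoint} problem $-\varphi_t-\operatorname{div}(A^{*}\nabla\varphi)=\operatorname{div}(G)$, applies Theorem~\ref{101120141} at the \emph{large} exponent $p'>2$ with the dual weight $\overline{w}=w^{-1/(p-1)}$ (this is exactly where $w^{2+\varepsilon}\in\mathbf{A}_1$ enters, since it forces $\overline{w}\in\mathbf{A}_{p'/2}$ with $[\overline{w}]_{\mathbf{A}_{p'/2}}\le[w^{2+\varepsilon}]_{\mathbf{A}_1}^{1/\varepsilon}$), and then dualizes via $\int\nabla u\cdot G=\int\nabla\varphi\cdot F$ to obtain $\|\nabla u\|_{L^p_w}\le C\|F\|_{L^p_w}$. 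A second application of part~(a) at $q=3$ gives an $L^3_w$ bound, and real interpolation for the sublinear map $F\mapsto|\nabla u|$ then covers the whole range $p<q\le 2$ in $L^{q,s}_w$. Your proposal misses this duality--interpolation step, which is the actual mechanism for descending below the energy exponent.
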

                                                                                                                                  In above Theorem,  $\mathcal{M}_{1}$ denotes the first order
                                                                                                                                                                 fractional Maximal parabolic  potential  on $\mathbb{R}^{N+1}$ of a positive Radon measure in $\mathbb{R}^{N+1}$ by                 
\begin{equation*}
\mathcal{M}_{1}[\mu](x,t)=\sup_{0<\rho<R}\frac{\mu(\tilde{Q}_\rho(x,t))}{\rho^{N+1}}~~~\forall (x,t)\in \mathbb{R}^{N+1}.\end{equation*}                                                                              
   We can use estimates \eqref{101120146} in Theorem \ref{101120141} and \eqref{161120144}-\eqref{161120142} in Theorem  \ref{161120141} and the following Lemma to get upper bounds for gradients of the solutions in Lorentz-Morrey spaces.
  \begin{lemma}\label{1611201410} Let $0<q<\infty,0<s\leq \infty, \gamma\geq 1$ and $H_1,H_2$ be measurable functions in $\Omega_T$. If 
  \begin{align*}
  ||H_1||_{L^{q,s}_w(\Omega_T)}\leq C(N,q,s,[w^{\gamma}]_{\mathbf{A}_1}) ||H_2||_{L^{q,s}_w(\Omega_T)},
  \end{align*} 
  for any $w^{\gamma}\in \mathbf{A}_1$, then for any $\kappa\in \left(\frac{(N+2)(\gamma-1)}{\gamma},N+2\right],\vartheta\in \left(\frac{N(\gamma-1)}{\gamma}, N\right],$
  \begin{align}\label{1611201411}
    ||H_1||_{L^{q,s;\kappa}_{*}(\Omega_T)}\leq C(N,q,s,\gamma,\kappa) ||H_2||_{L^{q,s;\kappa}_{*}(\Omega_T)},
    \end{align}
    and 
    \begin{align}\label{1611201412}
        ||H_1||_{L^{q,s;\vartheta}_{**}(\Omega_T)}\leq C(N,q,s,\gamma,\vartheta) ||H_2||_{L^{q,s;\vartheta}_{**}(\Omega_T)}.
        \end{align}
  \end{lemma}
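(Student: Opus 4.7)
The plan is to exploit the freedom in the hypothesis by testing it against a carefully chosen family of weights indexed by cylinders (or balls), so that the resulting weighted Lorentz inequality rearranges into the desired Morrey bound. The crucial ingredient is the Coifman--Rochberg principle for the parabolic maximal function: for every $f\in L^1_{\mathrm{loc}}(\mathbb{R}^{N+1})$ with $\mathcal{M}(f)<\infty$ a.e. and every $\beta\in[0,1)$, $[\mathcal{M}(f)]^{\beta}$ lies in $\mathbf{A}_1$ with $[[\mathcal{M}(f)]^\beta]_{\mathbf{A}_1}$ depending only on $N$ and $\beta$.

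For \eqref{1611201411}, fix $(x_0,t_0)\in\mathbb{R}^{N+1}$ and $\rho>0$, and set
\[
w(y,s):=\bigl[\mathcal{M}(\mathbf{1}_{\tilde Q_\rho(x_0,t_0)})(y,s)\bigr]^{\alpha/\gamma},\qquad \alpha\in[0,1).
\]
Then $w^{\gamma}=[\mathcal{M}(\mathbf{1}_{\tilde Q_\rho(x_0,t_0)})]^{\alpha}\in\mathbf{A}_1$ with constant depending only on $N$ and $\alpha$, uniformly in $(x_0,t_0,\rho)$. Standard pointwise bounds for $\mathcal{M}$ give $w\gtrsim 1$ on $\tilde Q_\rho(x_0,t_0)$ and $w(y,s)\simeq 2^{-k(N+2)\alpha/\gamma}$ on the dyadic shells $A_k:=\tilde Q_{2^{k+1}\rho}(x_0,t_0)\setminus\tilde Q_{2^{k}\rho}(x_0,t_0)$, $k\ge 0$. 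Applying the assumed inequality to this weight and restricting the left-hand side to $\tilde Q_\rho(x_0,t_0)\cap\Omega_T$ yields
\[
\|H_1\|_{L^{q,s}(\tilde Q_\rho(x_0,t_0)\cap\Omega_T)}\;\le\;C(N,q,s,\alpha,\gamma)\,\|H_2\|_{L^{q,s}_w(\Omega_T)}.
\]

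Since $w$ is essentially constant on each $A_k$, one has $\|H_2\mathbf{1}_{A_k}\|_{L^{q,s}_w}\simeq 2^{-k(N+2)\alpha/(\gamma q)}\|H_2\|_{L^{q,s}(A_k)}$, and the definition of the Morrey--Lorentz norm gives $\|H_2\|_{L^{q,s}(A_k)}\le(2^{k+1}\rho)^{(N+2-\kappa)/q}\|H_2\|_{L^{q,s;\kappa}_{*}(\Omega_T)}$. Summing in $k$ produces a geometric series with ratio $2^{[(N+2-\kappa)-(N+2)\alpha/\gamma]/q}$, which converges iff $\alpha>\gamma(N+2-\kappa)/(N+2)$; together with $\alpha<1$ this forces $\kappa>(N+2)(\gamma-1)/\gamma$, exactly the admissibility hypothesis. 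Choosing any $\alpha$ in the resulting nonempty window yields $\|H_2\|_{L^{q,s}_w(\Omega_T)}\le C\rho^{(N+2-\kappa)/q}\|H_2\|_{L^{q,s;\kappa}_{*}(\Omega_T)}$, so that multiplying through by $\rho^{(\kappa-(N+2))/q}$ and taking the supremum over $(x_0,t_0,\rho)$ produces \eqref{1611201411}.

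Estimate \eqref{1611201412} is proved by the same scheme with the parabolic cylinder replaced by a purely spatial ball: take $w(y,s):=[\mathcal{M}_{x}(\mathbf{1}_{B_\rho(x_0)})(y)]^{\alpha/\gamma}$, viewed as a time-independent weight on $\mathbb{R}^{N+1}$. The Coifman--Rochberg step is now carried out in ambient dimension $N$ rather than $N+2$, shifting the convergence threshold from $(N+2)(\gamma-1)/\gamma$ to $N(\gamma-1)/\gamma$. The main technical point, and essentially the only obstacle, is checking that a spatially defined weight of this form belongs to the parabolic $\mathbf{A}_1$ class with a constant independent of $(x_0,\rho)$; this follows from a Fubini-type verification together with the Coifman--Rochberg lemma applied in the $x$-variable, after which the dyadic annular decomposition and geometric summation proceed verbatim.
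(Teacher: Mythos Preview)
Your approach is correct and essentially the same as the paper's: rather than invoking Coifman--Rochberg, the paper tests the hypothesis directly against the truncated power weights
\[
w_1(x,t)=\min\bigl\{\rho^{-a},\,\max\{|x-x_0|,\sqrt{2|t-t_0|}\}^{-a}\bigr\},\qquad
w_2(x,t)=\min\bigl\{\rho^{-b},\,|x-x_0|^{-b}\bigr\},
\]
with $a=N+2-\kappa+\kappa_1$ and $b=N-\vartheta+\vartheta_1$, which are pointwise comparable to your weights $[\mathcal{M}(\mathbf{1}_{\tilde Q_\rho})]^{\alpha/\gamma}$ and $[\mathcal{M}_x(\mathbf{1}_{B_\rho})]^{\alpha/\gamma}$ under the identifications $a=(N+2)\alpha/\gamma$ and $b=N\alpha/\gamma$. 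The $\mathbf{A}_1$ verification (your constraint $\alpha<1$ versus the paper's $\kappa_1<\kappa-(N+2)(\gamma-1)/\gamma$), the decay on dyadic shells, and the convergence window for the geometric series then coincide.
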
 
  In \eqref{1611201411}, $L^{q,s;\kappa}_{*}(\Omega_T)$ denotes  Lorentz-Morrey space, is the set of measurable functions $g$ in $\Omega_T$ such that 
              \begin{equation*}                                                              ||g||_{L_{*}^{q,s;\kappa}(\Omega_T)}:=\sup_{0<\rho< T_0, (x,t)\in \Omega_T}\rho^{\frac{\kappa-N-2}{q}}||g||_{L^{q,s}(\tilde{Q}_\rho(x,t)\cap \Omega_T)}<\infty.
                                                                                                                 \end{equation*}
 In \eqref{1611201412}, $L_{**}^{q,s;\vartheta}(\Omega_T)$ is  the Lorentz-Morrey space of measurable functions $g$ in $\Omega_T$ such that 
                                                                     \begin{equation*}
                                                                     ||g||_{L_{**}^{q,s;\vartheta}(\Omega_T)}:=\sup_{0<\rho< \text{diam}(\Omega), x\in \Omega}\rho^{\frac{\vartheta-N}{q}}||g||_{L^{q,s}((B_\rho(x)\cap \Omega)\times (0,T))}<\infty.
                                                                     \end{equation*}
        This Lemma is inspired by \cite[Proof of Theorem 2.3]{55MePh1},  its proof can be found in  \cite[Proof of Theorem 2.21]{55QH2} and  notice that 
        for $(x_0,t_0)\in\Omega_T$ and $0<\rho<T_0$ \begin{align*}
           & w_1(x,t)=\min\{\rho^{-N-2+\kappa-\kappa_1},\max\{|x-x_0|,\sqrt{2|t-t_0|}\}^{-N-2+\kappa-\kappa_1}\},\\&
            w_2(x,t)=\min\{\rho^{-N+\vartheta-\vartheta_1},|x-x_0|^{-N+\vartheta-\vartheta_1}\},
                     \end{align*}
           where $0<\kappa_1<\kappa-\frac{(N+2)(\gamma-1)}{\gamma},$ $ 0<\vartheta_1<\kappa-\frac{N(\gamma-1)}{\gamma}$ and $$[w_1^{\gamma}]_{\mathbf{A}_1}\leq C(N,\kappa_1,\kappa,\gamma),~~[w_2^{\gamma}]_{\mathbf{A}_1}\leq C(N,\vartheta_1,\vartheta,\gamma).$$
   For example, from \eqref{161120144} in Theorem \ref{161120141} and  Lemma \ref{1611201410} we obtain  for $2< q<\infty$, $0<s\leq\infty$ and $0<\kappa\leq N+2, 0<\vartheta\leq N+2$ there hold
   \begin{align}\nonumber
      & |||\nabla u|||_{L_{*}^{q,s;\kappa}(\Omega_T)}\leq C ||\mathcal{M}_1[|\omega|]||_{L_{*}^{q,s;\kappa}(\Omega_T)}+ C |||F|||_{L_{*}^{q,s;\kappa}(\Omega_T)},\\& 
    |||\nabla u|||_{L_{**}^{q,s;\vartheta}(\Omega_T)}\leq C ||\mathcal{M}_1[\omega]||_{L_{**}^{q,s;\vartheta}(\Omega_T)}+C |||F|||_{L_{**}^{q,s;\vartheta}(\Omega_T)}, \label{191120141}
   \end{align}
  and from   \eqref{161120142} in Theorem \ref{161120141} we  also have preceding estimates with $1<q\leq 2$, $0<s\leq\infty$ and $\frac{N+2}{2}<\kappa\leq N+2, \frac{N}{2}<\vartheta\leq N$.  \\
  Furthermore, according to  \cite[Proof of Theorem 2.21]{55QH2} we verify that for $q>1,0<\vartheta<\min\{N,q\} $ and $\varphi\in L^1(0,T,W_0^{1,1}(\Omega))$ there holds
  \begin{equation}\label{161120149}                                 
  \left(\int_0^T|\text{osc}_{B_\rho\cap\overline{\Omega}}\varphi(t)|^qdt\right)^{\frac{1}{q}} \leq C \rho^{1-\frac{\vartheta}{q}} |||\nabla \varphi|||_{L_{**}^{q,q;\vartheta}(\Omega_T)},
  \end{equation}
  for any ball $B_\rho\subset\mathbb{R}^N$, where $C=C(N,q,\vartheta)$. Therefore, \eqref{191120141} implies a global Holder-estimate in space variable and $L^q-$estimate  in time, namely for all ball $B_\rho\subset\mathbb{R}^N$
  \begin{equation*}                                
  \left(\int_0^T|\text{osc}_{B_\rho\cap\overline{\Omega}}u(t)|^qdt\right)^{\frac{1}{q}} \leq C \rho^{1-\frac{\vartheta}{q}} \left( ||\mathcal{M}_1[\omega]||_{L_{**}^{q,q;\vartheta}(\Omega_T)}+ |||F|||_{L_{**}^{q,q;\vartheta}(\Omega_T)}\right),
  \end{equation*}
  with $0<\vartheta<\min\{N,q\}.$     

We would like to refer to  \cite{Mi2,55Mi0} as the first papers which have been used the first order factional maximal operators in order to obtain the  Lorentz-Morrey
estimates for gradients of solutions to nonlinear elliptic equations with measure or $L^1$ data.

Finally, we use Theorem \ref{161120141} to prove the  existence of solutions of the Riccati type parabolic equations
    \begin{equation}\label{111120143}
                      \left\{
                      \begin{array}
                      [c]{l}%
                      {u_{t}}-\operatorname{div}(A(x,t,\nabla u))=|\nabla u|^q+\operatorname{div}(F)+\mu~~~\text{in }\Omega_T,\\
                      {u}=0\qquad\text{on }\partial\Omega\times (0,T),\\
                      u(0)=\sigma ~~\text{ in }~ \Omega,
                      \end{array}
                      \right.  
                      \end{equation}                        
 where $q>1$ and  $F\in L^q(\Omega_T,\mathbb{R}^N)$, $\mu\in\mathfrak{M}_b(\Omega_T)$, $\sigma\in\mathfrak{M}_b(\Omega)$. \\
    \begin{theorem}\label{5hh260320144} Suppose that $A$ is linear. Let $q>1$, $F\in L^q(\Omega_T,\mathbb{R}^N)$ and $\mu\in\mathfrak{M}_b(\Omega_T), \sigma\in\mathfrak{M}_b(\Omega)$,  set $\omega=|\mu|+|\sigma|\otimes\delta_{\{t=0\}}$. There exist $C_1=C_1(N,\Lambda_1,\Lambda_2,q,T_0)$, $\delta=\delta(N,\Lambda_1,\Lambda_2,q)\in (0,1)$ and  $s_0=s_0(N,\Lambda_1,\Lambda_2)>0$ such that if $\Omega$ is a $(\delta,R_0)$-Reifenberg flat domain and $[A]_{s_0}^{R_0}\le \delta$ for some $R_0>0$ and 
                            \begin{align}\label{171120141}
                            \omega(K)\leq C_1 \operatorname{Cap}_{\mathcal{G}_1,q'}(K),
                            \end{align}
                            and 
                            \begin{align}\label{171120142}
                           \int_{K}H_qdxdt\leq  C_1^q \operatorname{Cap}_{\mathcal{G}_1,q'}(K),
                            \end{align}    for any compact set $K\subset \mathbb{R}^N$ where $H_q=\left(\mathcal{M}(|F|^2)\right)^{q/2}\chi_{\Omega_T}$ if $q\geq \frac{N+2}{N}$ and $H_q=|F|^q\chi_{\Omega_T}$ if $q< \frac{N+2}{N}$ , then problem \eqref{111120143} 
                         has a  weak solution $u\in L^q(0,T, W_0^{1,q}(\Omega))$ satisfying 
                                                  \begin{align*}
                                        \int_{K\cap \Omega_T}|\nabla u|^qdxdt\leq  C_2 \operatorname{Cap}_{\mathcal{G}_1,q'}(K),     
                                                  \end{align*}
                                                  for any compact set $K\subset \mathbb{R}^N$, here $C_2=C_2(N,\Lambda_1,\Lambda_2,q,T_0/R_0,T_0, C_1)>0$.   
                                                   \end{theorem}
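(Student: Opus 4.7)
The natural approach is a Schauder fixed-point argument. Given $v\in L^q(0,T;W_0^{1,q}(\Omega))$, let $S(v)=u$ denote the unique weak solution (guaranteed by Theorem~\ref{161120141}(b) with $\varepsilon$ chosen so that $\tfrac{2(\varepsilon+1)}{\varepsilon+2}<q$) of the linear problem obtained from \eqref{111120143} by freezing the nonlinearity:
\begin{equation*}
u_t-\operatorname{div}(A(x,t,\nabla u))=|\nabla v|^q\chi_{\Omega_T}+\operatorname{div}(F)+\mu,\quad u(0)=\sigma,\quad u|_{\partial\Omega\times(0,T)}=0.
\end{equation*}
A fixed point of $S$ is then a weak solution of \eqref{111120143}. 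I would take as candidate invariant set
\begin{equation*}
E=\Bigl\{v\in L^q(0,T;W_0^{1,q}(\Omega)):\int_K|\nabla v|^q\,dxdt\leq C_2\operatorname{Cap}_{\mathcal{G}_1,q'}(K)\text{ for every compact }K\Bigr\},
\end{equation*}
a closed convex subset of $L^q(0,T;W_0^{1,q}(\Omega))$, with $C_2$ to be fixed in terms of $C_1$.

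The central step is to prove $S(E)\subset E$. By the Maz'ya-Verbitsky-Phuc duality for the Bessel capacity $\operatorname{Cap}_{\mathcal{G}_1,q'}$, the hypothesis \eqref{171120141} is equivalent to an inequality of the form $\int_K(\mathcal{M}_1[\omega])^q\,dxdt\leq c_0C_1^q\operatorname{Cap}_{\mathcal{G}_1,q'}(K)$, and the defining inequality of $E$ is likewise equivalent to the corresponding capacitary bound for $\mathcal{M}_1[|\nabla v|^q\chi_{\Omega_T}]$ with $C_2$ in place of $C_1^q$. Combining Theorem~\ref{161120141} (linear weighted-Lorentz estimates) with the Lorentz-Morrey transfer of Lemma~\ref{1611201410}, applied with the weight $w_2(x,t)=\min\{\rho^{-N+\vartheta-\vartheta_1},|x-x_0|^{-N+\vartheta-\vartheta_1}\}$ tuned to the capacity, yields
\begin{equation*}
\int_K|\nabla u|^q\,dxdt\leq c_0\bigl(C_1^q+C_2\bigr)\operatorname{Cap}_{\mathcal{G}_1,q'}(K),
\end{equation*}
where the two contributions come respectively from the data $(\omega,F)$ through \eqref{171120141}-\eqref{171120142} and from the nonlinear source $|\nabla v|^q$. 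Setting $C_2=2c_0C_1^q$ and choosing $C_1$ small enough (which is precisely the role of the smallness parameter in the statement) closes the inequality as $\int_K|\nabla u|^q\leq C_2\operatorname{Cap}_{\mathcal{G}_1,q'}(K)$. The dichotomy in the definition of $H_q$ at $q=(N+2)/N$ corresponds to whether one invokes the maximal-function formulation of Theorem~\ref{101120143} in the range $q\geq(N+2)/N$ (so that $H_q=(\mathcal{M}(|F|^2))^{q/2}$ is the natural majorant of $|\nabla u|^q$) or the direct gradient estimate of Theorem~\ref{161120141}(b) in the complementary range.

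Continuity of $S$ on $E$ in the $L^q(0,T;W_0^{1,q}(\Omega))$ topology follows by applying the same linear estimate to the difference of two solutions, with dominated convergence in the source term provided by the equi-integrability furnished by the capacitary bound. Precompactness of $S(E)$ follows from Aubin-Lions: the equation controls $u_t$ in a negative-order space while the spatial gradients are uniformly bounded in a Lorentz-Morrey space on $E$. Schauder's fixed-point theorem then produces $u=S(u)\in E$, which is the desired weak solution of \eqref{111120143} satisfying the stated capacitary estimate with $C_2$ explicitly determined by $C_1$ and the structural data.

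The main obstacle is precisely the self-mapping step: transferring the capacitary data hypotheses through the linear solution operator with a constant that permits closure of the inequality. This requires threading the precise Lorentz-Morrey estimates of Lemma~\ref{1611201410} through the Bessel-capacity/fractional-maximal-potential duality, and relies crucially on the smallness of $C_1$ to absorb the nonlinear feedback from $|\nabla v|^q$; it is also what forces the split in the definition of $H_q$.
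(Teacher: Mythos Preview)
Your approach via Schauder's fixed point is different from the paper's, which instead runs a Picard iteration $u_0\equiv 0$, $u_{n+1}=S(u_n)$ and shows directly that the sequence is Cauchy in $L^q(0,T;W_0^{1,q}(\Omega))$. The paper's key observation is that, because $A$ is \emph{linear}, the difference $u_{n+1}-u_n$ solves the homogeneous-data problem with right-hand side $|\nabla u_n|^q-|\nabla u_{n-1}|^q$, so the capacitary estimate of Theorem~\ref{1711201410} applied to this difference yields a contraction
\[
\bigl[|\nabla u_{n+1}-\nabla u_n|^q\chi_{\Omega_T}\bigr]_{\operatorname{Cap}_{\mathcal{G}_1,q'}}\le \tfrac12\,\bigl[|\nabla u_{n}-\nabla u_{n-1}|^q\chi_{\Omega_T}\bigr]_{\operatorname{Cap}_{\mathcal{G}_1,q'}}
\]
once the data are small enough. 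This entirely bypasses any compactness argument.

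Two points in your outline need correction. First, your self-mapping inequality is miswritten: treating $|\nabla v|^q$ as additional measure data, Theorem~\ref{1711201410} gives
\[
\bigl[|\nabla u|^q\chi_{\Omega_T}\bigr]_{\operatorname{Cap}_{\mathcal{G}_1,q'}}\le c\,[H_q]_{\operatorname{Cap}_{\mathcal{G}_1,q'}}+c\,\bigl[\omega+|\nabla v|^q\chi_{\Omega_T}\bigr]^{\,q}_{\operatorname{Cap}_{\mathcal{G}_1,q'}}\le c\,C_1^q+c\,(C_1+C_2)^q,
\]
which is \emph{nonlinear} in $C_2$; your displayed bound $c_0(C_1^q+C_2)$ is linear in $C_2$ and, as written, cannot be closed simply by taking $C_1$ small (since $c_0$ is a fixed structural constant). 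With the correct power the closure does work: choose $C_2$ comparable to $C_1^q$ and use $q>1$ with $C_1$ small. Second, the Aubin--Lions step is more delicate than you indicate: since $\mu\in\mathfrak{M}_b(\Omega_T)$ and $\sigma\in\mathfrak{M}_b(\Omega)$, the time derivative $u_t$ contains a genuine measure part that does not sit in any $L^r(0,T;X)$ space, so the standard Aubin--Lions lemma does not apply to $S(E)$ without further decomposition of $u$.

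Note finally that your continuity argument --- ``apply the same linear estimate to the difference of two solutions'' --- is precisely the paper's contraction estimate; once you have it, the iteration converges directly and the Schauder machinery (and hence the problematic compactness) becomes unnecessary. Exploiting the linearity of $A$ in this way is exactly what the paper does and is the cleaner route here.
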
                    
In this Theorem, capacity $\operatorname{Cap}_{\mathcal{G}_1,q'}$ denotes the  $(\mathcal{G}_1,q')$-capacity where $\mathcal{G}_1$ is the  Bessel parabolic kernel of first order (see \cite{55Bag})
\begin{equation*}
     \mathcal{G}_1(x,t)=\left((4\pi)^{N/2}\Gamma(1/2)\right)^{-1}                            \frac{\chi_{(0,\infty)}(t)}{t^{(N+1)/2}}\exp\left(-t-\frac{|x|^2}{4t}\right)~~\text{for}~~(x,t)~~\text{in}~~\mathbb{R}^{N+1}.
                                     \end{equation*}  
It is defined by 
    \begin{align*}               
   \operatorname{Cap}_{\mathcal{G}_1,q'}(E)=\inf\left\{\int_{\mathbb{R}^{N+1}}|f|^{q'}dxdt: f\in L^{q'}_+(\mathbb{R}^{N+1}), \mathcal{G}_1*f\geq \chi_E\right\},
                \end{align*} 
   for any Borel set $E\subset\mathbb{R}^{N+1}$, where $\chi_E$ is the characteristic function on $E$.                
      Note that if $1<q<\frac{N+2}{N+1}$,  the capacity $ \operatorname{Cap}_{\mathcal{G}_1,q'}$ of a singleton is positive thus \eqref{171120141} and \eqref{171120142} hold for some constant $C_1>0$ provided $\mu\in\mathfrak{M}_b(\Omega_T), u_0\in\mathfrak{M}_b(\Omega)$ and $|F|\in L^q(\Omega_T)$.\\
      We remark that in case $F\equiv 0$ the existence of solutions to \eqref{111120143} has been obtained in our paper \cite{55QH2}.
      \begin{remark} The  inequality \eqref{171120141} is equivalent to 
      \begin{align}\label{171120143}
                            & |\mu|(K)\leq C \operatorname{Cap}_{\mathcal{G}_1,q'}(K),~~\sigma\equiv 0 \text{ when }~q\geq 2,\\&
  |\mu|(K)\leq C \operatorname{Cap}_{\mathcal{G}_1,q'}(K),~~|\sigma|(O)\leq \operatorname{Cap}_{\mathbf{G}_{\frac{2-q}{q}},q'}(O) \text{ when }~1<q< 2,                     
                             \end{align}
                             for any compact sets $K\subset \mathbb{R}^{N+1}, O\subset\mathbb{R}^N,$ where $\mathbf{G}_{\frac{2-q}{q}}$ is  the  Bessel  kernel of order $\frac{2-q}{q}$ and  capacity $\operatorname{Cap}_{\mathbf{G}_{\frac{2-q}{q}},q'}$ of $O$ is defined by                             
                                 \begin{align*}               
                              \operatorname{Cap}_{\mathbf{G}_{\frac{2-q}{q}},q'}(O)=\inf\left\{\int_{\mathbb{R}^{N}}|f|^{q'}dx: f\in L^{q'}_+(\mathbb{R}^{N}), \mathbf{G}_{\frac{2-q}{q}}*f\geq \chi_O\right\},
                                             \end{align*}
    see \cite[Remark 4.34]{55QH2}.   Moreover, if $q>2$,   the inequality \eqref{171120142} is equivalent to \begin{align*}
                           \int_{K\cap \Omega_T} |F|^q dxdt\leq  C \operatorname{Cap}_{\mathcal{G}_1,q'}(K),
                                                     \end{align*} 
                               for any compact set $K\subset \mathbb{R}^N$, see Lemma \ref{171120146}. 
      \end{remark}

    \section{Interior estimates and boundary estimates for parabolic equations}
    In this section we present various local interior and boundary estimates for weak solution $u$ of \eqref{5hh070120148}. They will be used for our global estimates later. First we recall basic existence and uniqueness result of problem \eqref{5hh070120148}.
    \begin{proposition}\label{111120144} If  $F\in L^2(\Omega_T,\mathbb{R}^N)$, there exists a unique weak solution $u\in L^2(0,T;H^1_0(\Omega))$ of \eqref{5hh070120148} and the following global estimate holds: 
    \begin{align}\label{111120145}
    \int_{\Omega_T}|\nabla u|^2dxdt\leq \Lambda_2^{-1/2} \int_{\Omega_T}|F|^2dxdt.
    \end{align}
    \end{proposition}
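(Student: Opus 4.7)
The plan is to establish existence, uniqueness, and the energy estimate \eqref{111120145} by the standard monotone-operator machinery, adapted to the parabolic weak formulation used here.

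For existence, I would run a Galerkin approximation. Let $\{w_k\}_{k\ge 1}$ be the Dirichlet eigenfunctions of $-\Delta$ on $\Omega$, and seek finite-dimensional approximations $u_m(x,t)=\sum_{k=1}^m c_k^m(t)\,w_k(x)$ satisfying
$$\int_\Omega (u_m)_t\, w_k\,dx+\int_\Omega A(x,t,\nabla u_m)\cdot\nabla w_k\,dx=-\int_\Omega F\cdot\nabla w_k\,dx,\qquad c_k^m(0)=0,$$
for $k=1,\dots,m$. Carath\'eodory's ODE theorem gives local existence of the coefficients $c_k^m$ from the continuity of $A$ in $\zeta$ and the Lipschitz bound \eqref{181120141}, while the a priori bound derived below prevents finite-time blow-up and yields solutions on $[0,T]$. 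Since $A(x,t,\cdot)$ is monotone by \eqref{5hhcondb} with linear growth by \eqref{5hhconda}, a standard Minty--Browder argument allows passage $m\to\infty$ and identifies the weak limit of $A(x,t,\nabla u_m)$ with $A(x,t,\nabla u)$, producing a weak solution $u\in L^2(0,T;H_0^1(\Omega))$ in the sense of the excerpt.

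For the a priori estimate (and for \eqref{111120145}), I would multiply the $k$-th Galerkin equation by $c_k^m(t)$, sum over $k\le m$, and integrate on $(0,T)$, obtaining
$$\tfrac12\int_\Omega u_m(T)^2\,dx+\int_{\Omega_T}A(x,t,\nabla u_m)\cdot\nabla u_m\,dxdt=-\int_{\Omega_T}F\cdot\nabla u_m\,dxdt,$$
using $u_m(0)=0$. Note that \eqref{5hhconda} with $\zeta=0$ forces $A(x,t,0)=0$, so setting $\xi=0$ in \eqref{5hhcondb} yields the coercivity $\langle A(x,t,\nabla u_m),\nabla u_m\rangle\ge\Lambda_2|\nabla u_m|^2$. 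Cauchy--Schwarz and Young's inequality on the right, followed by absorption of the gradient term into the left, give a bound of the form $\int_{\Omega_T}|\nabla u_m|^2\,dxdt\le C(\Lambda_2)\int_{\Omega_T}|F|^2\,dxdt$ uniform in $m$; the desired estimate \eqref{111120145} then descends to $u$ by weak lower semicontinuity of the $L^2$-norm of the gradient under the weak limit $\nabla u_m\rightharpoonup\nabla u$.

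For uniqueness, the difference $v=u^{(1)}-u^{(2)}$ of two weak solutions vanishes on $\partial_p(\Omega_T)$ and satisfies, in the distributional sense,
$$-\int_{\Omega_T}v\,\varphi_t\,dxdt+\int_{\Omega_T}\bigl(A(x,t,\nabla u^{(1)})-A(x,t,\nabla u^{(2)})\bigr)\cdot\nabla\varphi\,dxdt=0.$$
Testing against $v$ itself and invoking \eqref{5hhcondb} gives $\tfrac12\|v(T)\|_{L^2(\Omega)}^2+\Lambda_2\|\nabla v\|_{L^2(\Omega_T)}^2\le 0$, so $v\equiv 0$. The only delicate point in both the energy identity and the uniqueness argument is the use of $u$ (respectively $v$) as a test function, since the weak formulation in the excerpt only admits $\varphi\in C_c^1([0,T)\times\Omega)$. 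I would handle this either by carrying out the energy identity at the Galerkin level (where $u_m$ is $C^1$ in $t$ and lies in a fixed finite-dimensional spatial subspace) and then passing to the limit, or by Steklov time-averaging of $v$ combined with spatial mollification to manufacture admissible approximations and then letting the regularization parameters tend to zero. This justification is the main technical step, but it is standard for parabolic problems of this type.
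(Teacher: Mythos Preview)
Your proposal is correct and follows essentially the same approach as the paper: existence and uniqueness via the standard monotone-operator theory for parabolic equations (the paper simply invokes this as ``the Lax--Milgram Theorem, version for parabolic framework''), and the energy estimate \eqref{111120145} obtained by testing the equation with $u$ itself. Your write-up is in fact considerably more careful than the paper's one-line sketch---you unpack the existence proof via Galerkin/Minty--Browder and justify the use of $u$ as a test function through Steklov averaging or the Galerkin level, points the paper leaves implicit.
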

    The existence and uniqueness of a weak solution of problem \eqref{5hh070120148} with $F\in L^2(\Omega_T,\mathbb{R}^N)$ is obtained from the Lax-Milgram Theorem, version for parabolic  framework.
    Using $u$ as a test function in \eqref{5hh070120148}, we get \eqref{111120145}. Moreover, due to the embedding 
    \begin{align*}
    \{\varphi:\varphi\in L^2(0,T;H_0^1(\Omega)),\varphi_t\in L^2(0,T;H^{-1}(\Omega))\}\subset C(0,T;L^2(\Omega)),
    \end{align*} 
    thus, the unique weak solution $u$ of \eqref{5hh070120148} belongs to $C(0,T;L^2(\Omega)).$ We can see that $u$ is also the unique weak solution of \eqref{5hh070120148} in $\Omega\times (-\infty,T)$ where $F\in L^2(\Omega_T,\mathbb{R}^N)$ and $F=0, u=0$ in $\Omega\times (-\infty,0)$.\medskip\\
   For some technical reasons, throughout this section, we always assume that  $u\in C(-\infty, T;L^2(\Omega))\cap L^2(-\infty,T;H^1_0(\Omega))$ is a weak solution to equation \eqref{5hh070120148} in $\Omega\times (-\infty,T)$ with  $F\in L^2(\Omega_T,\mathbb{R}^N)$, $F=0$ in $\Omega\times (-\infty,0)$. \\
      \subsection{Interior Estimates}
       For each ball $B_{2R}=B_{2R}(x_0)\subset\subset\Omega$ and $t_0\in (0,T)$, one considers the unique solution 
      \begin{equation*}
      w\in C(t_0-4R^2,t_0;L^2(B_{2R}))\cap L^2(t_0-4R^2,t_0;H^1(B_{2R}))
      \end{equation*}
      to the following equation 
      \begin{equation}
       \label{111120146}\left\{ \begin{array}{l}
         {w_t} - \operatorname{div}\left( {A(x,t,\nabla w)} \right) = 0 \;in\;Q_{2R}, \\ 
         w = u\quad \quad on~~\partial_{p}Q_{2R}, \\ 
         \end{array} \right.
       \end{equation}
       where $Q_{2R}=B_{2R}  \times (t_0-4R^2,t_0)$ and $\partial_{p}Q_{2R}= \left( {\partial B_{2R}  \times (t_0-4R^2,t_0)} \right) \cup \left( {B_{2R}  \times \left\{ {t = t_0-4R^2} \right\}} \right) $.\\
       The following a variant of Gehring's lemma was proved in \cite{55Nau,55DuzaMing}. 
       \begin{lemma} \label{111120147} Let $w$ be in \eqref{111120146}.
      There exist  constants $\theta_1>2$ and $C$ depending only on $N,\Lambda_1,\Lambda_2$ such that the following estimate      
       \begin{equation}\label{111120148}
       \left(\fint_{Q_{\rho/2}(y,s)}|\nabla w|^{\theta_1} dxdt\right)^{\frac{1}{\theta_1}}\leq C\fint_{Q_{\rho}(y,s)}|\nabla w| dxdt,
       \end{equation}holds 
               for all  $Q_{\rho}(y,s)\subset Q_{2R}$. 
       \end{lemma}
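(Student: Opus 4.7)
The plan is to prove this higher-integrability estimate by the standard Gehring/Giaquinta--Modica scheme, adapted to the parabolic setting. The target is a self-improving reverse Hölder inequality for $|\nabla w|$, which via the Gehring lemma will yield the existence of some $\theta_1 > 2$ so that (2.3) holds.

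The first step I would carry out is a parabolic Caccioppoli inequality on every subcylinder $Q_{\rho}(y,s) \subset Q_{2R}$. Testing the homogeneous equation for $w$ against $\eta^2(w-k)$, with $\eta$ a standard cut-off in $Q_{\rho}(y,s)$ supported in $Q_{\rho}(y,s)$ and equal to $1$ on $Q_{\rho/2}(y,s)$, and exploiting the ellipticity \eqref{5hhcondb} and the growth bound \eqref{5hhconda} on $A$, I would obtain
\begin{equation*}
\sup_{t} \int_{B_{\rho/2}(y)} |w-k|^2\,dx + \int_{Q_{\rho/2}(y,s)} |\nabla w|^2\,dx\,dt \le \frac{C}{\rho^2} \int_{Q_{\rho}(y,s)} |w-k|^2\,dx\,dt,
\end{equation*}
valid for every constant $k \in \mathbb{R}$; the constant $C$ depending only on $\Lambda_1$ and $\Lambda_2$. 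This is the parabolic analogue of the classical elliptic Caccioppoli estimate and controls energy on a smaller cylinder by $L^2$-oscillation on a larger one.

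Next I would convert the right-hand side into a sub-quadratic integral of $\nabla w$ via a parabolic Sobolev--Poincaré inequality. Choosing $k$ to be a suitable slice-average and combining with the $L^\infty$ control on the time slices just produced, one obtains, for some exponent $\sigma \in (0,1)$ depending only on $N$, the reverse Hölder inequality
\begin{equation*}
\left( \fint_{Q_{\rho/2}(y,s)} |\nabla w|^2 \,dx\,dt \right)^{\!1/2} \le C \left( \fint_{Q_{\rho}(y,s)} |\nabla w|^{2\sigma} \,dx\,dt \right)^{\!1/(2\sigma)}.
\end{equation*}
This is the standard intrinsic scaling trick that produces a sub-unital exponent on the right, which is precisely the hypothesis required to invoke the Gehring-type self-improvement lemma.

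The third step is then to apply the parabolic Gehring lemma (see \cite{55Nau,55DuzaMing}) to $g = |\nabla w|^{2\sigma}$. This upgrades the inequality above to integrability of $|\nabla w|^{\theta_1}$ for some $\theta_1 = \theta_1(N,\Lambda_1,\Lambda_2) > 2$, with the stated estimate \eqref{111120148}, where the $L^1$ norm on the right-hand side arises after one more application of Hölder's inequality between the Gehring exponent and $L^{2\sigma}$ (using the reverse Hölder inequality at the level of cubes of comparable size). The main technical obstacle is the passage to a genuinely sub-quadratic right-hand side in the Sobolev--Poincaré step — the parabolic scaling forces one to balance the spatial gain (via Sobolev embedding of $H^1_0$ into $L^{2N/(N-2)}$ on slices) against the $L^\infty$-in-time control produced by Caccioppoli; since this balance is classical and the two cited references carry it out in detail, I would simply invoke their result rather than redo this calculation.
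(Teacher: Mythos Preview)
Your proposal is correct and follows exactly the classical Gehring/Giaquinta--Modica route (Caccioppoli $\Rightarrow$ parabolic Sobolev--Poincar\'e $\Rightarrow$ sub-quadratic reverse H\"older $\Rightarrow$ Gehring self-improvement). The paper itself does not give a proof of this lemma at all; it simply states that the result was proved in \cite{55Nau,55DuzaMing}, so your sketch is precisely a reconstruction of the argument those references carry out. One minor point worth making explicit: the passage from the Gehring output (which naturally has an $L^{2\sigma}$-average on the right) down to the $L^1$-average stated in \eqref{111120148} is obtained by iterating the reverse H\"older inequality on a chain of concentric cylinders, which is the standard self-improving property of reverse H\"older classes; you allude to this but it is the only place where a reader might want one more line.
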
 
       The next lemma gives an estimate for $\nabla u-\nabla w$.
       \begin{lemma}\label{111120149}Let $w$ be in \eqref{111120146}. There exists a constant $C=C(N,\Lambda_1,\Lambda_2)>0$ such that
       \begin{align}\label{1111201410}
       \fint_{Q_{2R}}|\nabla u-\nabla w|^2dxdt\leq C\fint_{Q_{2R}}|F|^2dxdt.
       \end{align}
       \end{lemma}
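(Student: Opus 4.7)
The plan is to test the difference equation satisfied by $u-w$ against $u-w$ itself and exploit the monotonicity assumption \eqref{5hhcondb} together with the fact that $u-w$ vanishes on the parabolic boundary $\partial_p Q_{2R}$.

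First, I would subtract the weak formulations for $u$ and $w$: since $u$ satisfies \eqref{5hh070120148} and $w$ satisfies \eqref{111120146}, the difference $v := u-w$ is a weak solution of
\begin{equation*}
v_t - \operatorname{div}\bigl(A(x,t,\nabla u) - A(x,t,\nabla w)\bigr) = \operatorname{div}(F) \quad \text{in } Q_{2R},
\end{equation*}
with $v = 0$ on $\partial_p Q_{2R}$. Formally taking $v$ as a test function yields
\begin{equation*}
\frac{1}{2}\int_{B_{2R}} v(\cdot,t_0)^2\,dx + \int_{Q_{2R}}\bigl\langle A(x,t,\nabla u)-A(x,t,\nabla w),\nabla v\bigr\rangle\,dxdt = -\int_{Q_{2R}}\langle F,\nabla v\rangle\,dxdt,
\end{equation*}
where the boundary terms from the $B_{2R}$ boundary and the initial time $t = t_0 - 4R^2$ vanish because $v = 0$ there.

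Next I would apply the monotonicity inequality \eqref{5hhcondb} to the second term on the left, which gives a lower bound of $\Lambda_2\int_{Q_{2R}}|\nabla v|^2\,dxdt$. The right-hand side is estimated by Cauchy--Schwarz followed by Young's inequality with parameter $\Lambda_2$:
\begin{equation*}
\left|\int_{Q_{2R}}\langle F,\nabla v\rangle\,dxdt\right| \leq \frac{\Lambda_2}{2}\int_{Q_{2R}}|\nabla v|^2\,dxdt + \frac{1}{2\Lambda_2}\int_{Q_{2R}}|F|^2\,dxdt.
\end{equation*}
Absorbing the gradient term into the left, dividing by $|Q_{2R}|$, and setting $C = \Lambda_2^{-2}$ delivers \eqref{1111201410}.

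The only real obstacle is rigorously justifying the use of $v$ as a test function, since $v_t$ is only known to lie in the dual space $L^2(t_0-4R^2,t_0;H^{-1}(B_{2R}))$ and the pointwise-in-time identity $\frac{d}{dt}\|v\|_{L^2}^2 = 2\langle v_t, v\rangle$ requires care. I would handle this by a standard Steklov-average (or mollification in time) argument: replace $v$ by its Steklov average $v_h$, which is admissible as a test function, derive the identity for $v_h$, and pass to the limit $h \to 0$ using the continuity $v \in C([t_0-4R^2,t_0];L^2(B_{2R}))$ coming from the energy class to which $u$ and $w$ both belong. Once this technical step is settled, the rest of the proof is the routine coercivity/Young argument sketched above.
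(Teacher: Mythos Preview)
Your proposal is correct and follows essentially the same route as the paper: test the difference equation by $u-w$, drop the nonnegative term $\tfrac{1}{2}\int_{B_{2R}}(u-w)^2(t_0)\,dx$, invoke the monotonicity \eqref{5hhcondb}, and estimate the right-hand side (the paper cites H\"older rather than Young, but the effect is identical). Your added remark on Steklov averages is a legitimate technical elaboration that the paper simply leaves implicit.
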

       \begin{proof} Using $u-w$ as a test function in \eqref{5hh070120148} and \eqref{111120146} and since \begin{align*}
                    \int_{Q_{2R}}u_t(u-w)dxdt- \int_{Q_{2R}}w_t(u-w)dxdt =\frac{1}{2}\int_{B_{2R}}(u-w)^2(t_0)dx\geq  0,
                    \end{align*}
     we find
     \begin{align*}
     \int_{Q_{2R}} \left \langle A(x,t,\nabla u)-A(x,t,\nabla w),\nabla u-\nabla w\right\rangle dxdt\leq \int_{Q_{2R}} \left \langle F,\nabla u-\nabla w\right\rangle dxdt. 
     \end{align*} 
     Using  \eqref{5hhcondb} and H\"older inequality we derive \eqref{1111201410}. 
       \end{proof}\medskip\\\\
      To continue, we denote by $v$ the unique function
       \begin{equation*}
        v\in C(t_0-R^2,t_0;L^2(B_{R}))\cap L^2(t_0-R^2,t_0;H^1(B_{R}))
        \end{equation*}
        solution of the following equation 
         \begin{equation}\label{5hheq4}
         \left\{ \begin{array}{l}
              {v_t} - \operatorname{div}\left( {\overline{A}_{B_R(x_0)}(t,\nabla v)} \right) = 0 \;in\;Q_{R}, \\ 
              v = w\quad \quad on~~\partial_{p}Q_{R}, \\ 
              \end{array} \right.
         \end{equation}
          where $Q_{R}=B_{R}(x_0)  \times (t_0-R^2,t_0)$ and $\partial_{p}Q_{R}= \left( {\partial B_{R}  \times (t_0-R^2,t_0)} \right) \cup \left( {B_{R}  \times \left\{ {t = t_0-R^2} \right\}} \right) $.
       \begin{lemma}\label{5hh21101319} Let $\theta_1$ be  the constant in Lemma \ref{111120147}. There exist  constants $C_1=C_1(N,\Lambda_1,\Lambda_2)$ and  $C_2=C_2(\Lambda_1,\Lambda_2)$ such that  \begin{eqnarray}
               \fint_{Q_R}|\nabla w-\nabla v|^2dxdt\leq C_1 \left([A]_{s_1}^{R}\right)^2 \fint_{Q_{2R}}|\nabla w|^2dxdt, \label{5hh18094}
               \end{eqnarray}
       with $s_1=\frac{2\theta_1}{\theta_1-2}$ and \begin{equation}\label{5hh18091}
               C_2^{-1} \int_{Q_R}|\nabla v|^2dxdt\leq  \int_{Q_R}|\nabla w|^2dxdt\leq  C_2\int_{Q_R}|\nabla v|^2dxdt.
                \end{equation}
       \end{lemma}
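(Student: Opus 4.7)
\textbf{Proof proposal for Lemma \ref{5hh21101319}.}
The plan is to test the difference equation satisfied by $v-w$ against itself, splitting the flux difference $\overline{A}_{B_R(x_0)}(t,\nabla v) - A(x,t,\nabla w)$ in two complementary ways to obtain the two halves of the statement. Since $v=w$ on $\partial_p Q_R$, the function $v-w$ lies in $L^2(t_0-R^2,t_0;H^1_0(B_R))$ and vanishes at $t=t_0-R^2$, so it is an admissible test function for the equation $(v-w)_t - \operatorname{div}(\overline{A}_{B_R(x_0)}(t,\nabla v) - A(x,t,\nabla w)) = 0$. The parabolic time term contributes $\tfrac{1}{2}\|(v-w)(t_0)\|_{L^2(B_R)}^2 \geq 0$ (justified by the usual Steklov/time-regularization), leaving
\[
\int_{Q_R}\bigl\langle \overline{A}_{B_R(x_0)}(t,\nabla v) - A(x,t,\nabla w),\ \nabla v - \nabla w\bigr\rangle\,dxdt \leq 0.
\]

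To prove \eqref{5hh18094} I insert the decomposition
\[
\overline{A}_{B_R(x_0)}(t,\nabla v) - A(x,t,\nabla w) = \bigl[\overline{A}_{B_R(x_0)}(t,\nabla v) - \overline{A}_{B_R(x_0)}(t,\nabla w)\bigr] + \bigl[\overline{A}_{B_R(x_0)}(t,\nabla w) - A(x,t,\nabla w)\bigr].
\]
The averaged nonlinearity $\overline{A}_{B_R(x_0)}$ inherits \eqref{5hhcondb} by integrating the pointwise inequality over $B_R(x_0)$, so the first bracket yields the coercive term $\geq \Lambda_2|\nabla v-\nabla w|^2$, while the second is pointwise bounded by $\Theta(A,B_R(x_0))(x,t)\,|\nabla w|$ by definition of $\Theta$. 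A Cauchy--Schwarz--plus--Young absorption produces
\[
\int_{Q_R}|\nabla v - \nabla w|^2\,dxdt \leq C(\Lambda_2)\int_{Q_R}\Theta(A,B_R(x_0))^2|\nabla w|^2\,dxdt.
\]
Applying H\"older with exponents $s_1/2$ and $\theta_1/2$ (conjugate precisely because $2/s_1+2/\theta_1 = 1$), using the BMO smallness built into $[A]^R_{s_1}$ for the $\Theta^{s_1}$ factor and Lemma \ref{111120147} to upgrade the $L^{\theta_1}$ average of $|\nabla w|$ over $Q_R$ to an $L^2$ average over $Q_{2R}$, converts this bound into \eqref{5hh18094}.

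For \eqref{5hh18091} the upper inequality is immediate from the previous display together with the trivial pointwise bound $\Theta(A,B_R(x_0))\leq 2\Lambda_1$ (a consequence of \eqref{5hhconda}) and the triangle inequality. The lower inequality requires the \emph{symmetric} splitting
\[
\overline{A}_{B_R(x_0)}(t,\nabla v) - A(x,t,\nabla w) = \bigl[A(x,t,\nabla v) - A(x,t,\nabla w)\bigr] + \bigl[\overline{A}_{B_R(x_0)}(t,\nabla v) - A(x,t,\nabla v)\bigr],
\]
so that now the monotonicity of $A$ itself supplies the coercive term and the error is controlled by $\Theta(A,B_R(x_0))\,|\nabla v|\leq 2\Lambda_1|\nabla v|$. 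The same absorption yields $\int_{Q_R}|\nabla v-\nabla w|^2 \leq C\int_{Q_R}|\nabla v|^2$, and the triangle inequality closes the lower bound.

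The only mildly delicate technical point I anticipate is that Lemma \ref{111120147} is stated for $Q_\rho\subset Q_{2R}$ and so does not directly give the higher integrability of $\nabla w$ at the full scale $\rho=2R$; I handle this by a standard finite covering of $Q_R$ by sub-cylinders $Q_{R/4}(y_i,s_i)$ whose doubles $Q_{R/2}(y_i,s_i)$ sit strictly inside $Q_{2R}$, which affects only the absolute constants.
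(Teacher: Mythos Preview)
Your argument is correct and is the standard one for this type of comparison estimate; the paper itself does not give a proof but refers to \cite[Lemma 7.3]{55QH2}, where essentially the same testing-and-splitting argument is carried out. Two minor remarks: (i) Lemma \ref{111120147} is stated for all $Q_\rho(y,s)\subset Q_{2R}$, so taking $\rho=2R$ with center $(x_0,t_0)$ is already admissible and no covering is needed; (ii) your ``upper/lower'' labeling in the discussion of \eqref{5hh18091} is swapped---the intermediate bound $\int_{Q_R}|\nabla v-\nabla w|^2\le C(\Lambda_1,\Lambda_2)\int_{Q_R}|\nabla w|^2$ (from $\Theta\le 2\Lambda_1$) yields the \emph{left} inequality of \eqref{5hh18091}, while the symmetric splitting gives the \emph{right} one---but both directions are proved correctly and the dependence $C_2=C_2(\Lambda_1,\Lambda_2)$ comes out as claimed.
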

       \begin{proof} The proof can be found in \cite[Lemma 7.3]{55QH2}. 
       \end{proof}
       \begin{theorem}\label{5hh24092}Let $\theta_1$ be  the constant in Lemma \ref{111120147}. 
       There exists a functions $v\in C(t_0-R^2,t_0;L^2(B_{R}))\cap L^2(t_0-R^2,t_0;H^1(B_{R}))\cap L^\infty(t_0-\frac{1}{4}R^2,t_0;W^{1,\infty}(B_{R/2}))$ such that 
       \begin{equation}\label{5hh18092}
       ||\nabla v||^2_{L^\infty(Q_{R/2})}\leq C \fint_{Q_{2R}}|\nabla u|^2 dxdt +C\fint_{Q_{2R}}|F|^2 dxdt,
       \end{equation}
       and 
         \begin{eqnarray}
               \fint_{Q_R}|\nabla u-\nabla v|^2dxdt\leq  C\fint_{Q_{2R}}|F|^2 dxdt+ C\left([A]_{s_1}^{R}\right)^2\left(\fint_{Q_{2R}}|\nabla u|^2dxdt+ \fint_{Q_{2R}}|F|^2 dxdt\right),\label{5hh18093}
               \end{eqnarray}
               where $s_1=\frac{2\theta_1}{\theta_1-2}$ and $C=C(N,\Lambda_1,\Lambda_2)$.
       \end{theorem}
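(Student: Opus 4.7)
The plan is to take $v$ as the unique solution of the reference problem \eqref{5hheq4} and to combine the two comparison estimates already at our disposal, namely Lemma \ref{111120149} controlling $u-w$ and Lemma \ref{5hh21101319} controlling $w-v$, with an interior Lipschitz bound for the $x$--independent equation satisfied by $v$.

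First I would establish the $L^{\infty}$ bound \eqref{5hh18092}. The nonlinearity $\overline{A}_{B_R(x_0)}(t,\zeta)$ depends only on $(t,\zeta)$ and inherits the structural bounds \eqref{5hhconda}, \eqref{5hhcondb} and the derivative bound \eqref{181120141}; the Lieberman regularity theory \cite{55Li1,55Li2}, whose applicability is exactly the reason for imposing \eqref{181120141}, then gives $v \in L^{\infty}(t_0-\tfrac14 R^2,t_0;W^{1,\infty}(B_{R/2}))$ together with the mean-value type estimate
\[
\|\nabla v\|_{L^{\infty}(Q_{R/2})}^{2}\le C\fint_{Q_{R}}|\nabla v|^{2}\,dx\,dt.
\]
The energy comparison \eqref{5hh18091} upgrades the right-hand side to $C\fint_{Q_{R}}|\nabla w|^{2}$, and then Lemma \ref{111120149} (after a triangle inequality $|\nabla w|^{2}\le 2|\nabla u|^{2}+2|\nabla u-\nabla w|^{2}$ and enlarging the cylinder to $Q_{2R}$) gives $\fint_{Q_{R}}|\nabla w|^{2}\le C\fint_{Q_{2R}}|\nabla u|^{2}+C\fint_{Q_{2R}}|F|^{2}$, which yields \eqref{5hh18092}.

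For the oscillation estimate \eqref{5hh18093} I would apply the triangle inequality
\[
|\nabla u-\nabla v|^{2}\le 2|\nabla u-\nabla w|^{2}+2|\nabla w-\nabla v|^{2}
\]
on $Q_{R}$. The first piece is controlled directly by Lemma \ref{111120149}, yielding the term $C\fint_{Q_{2R}}|F|^{2}$. For the second, Lemma \ref{5hh21101319} produces
\[
\fint_{Q_{R}}|\nabla w-\nabla v|^{2}\le C_{1}\bigl([A]_{s_{1}}^{R}\bigr)^{2}\fint_{Q_{2R}}|\nabla w|^{2},
\]
and I would again bound $\fint_{Q_{2R}}|\nabla w|^{2}$ by $C\fint_{Q_{2R}}|\nabla u|^{2}+C\fint_{Q_{2R}}|F|^{2}$ as above, producing the $([A]_{s_{1}}^{R})^{2}$--factor with the desired right-hand side.

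The main technical obstacle is the Lipschitz bound for $v$: it hinges on interior $C^{0,1}$ regularity for parabolic equations of the form $v_{t}-\operatorname{div}(\widetilde{A}(t,\nabla v))=0$ with bounded measurable time-dependent coefficients, which is precisely what hypothesis \eqref{181120141} makes available through \cite{55Li1,55Li2}. All remaining steps are straightforward rearrangements of the comparison estimates already proved in Lemmas \ref{111120149} and \ref{5hh21101319}, together with the energy identity \eqref{5hh18091}.
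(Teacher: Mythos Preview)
Your proposal is correct and follows essentially the same route as the paper: take $v$ to be the solution of \eqref{5hheq4}, invoke interior Lipschitz regularity for the $x$-independent equation together with \eqref{5hh18091} and Lemma \ref{111120149} to obtain \eqref{5hh18092}, and then split $\nabla u-\nabla v$ through $w$ using Lemmas \ref{111120149} and \ref{5hh21101319} for \eqref{5hh18093}. The paper additionally cites the reverse H\"older inequality \eqref{111120148} in the first chain, but your direct appeal to the $L^2\to L^\infty$ Lipschitz bound for $v$ makes that reference unnecessary.
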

       \begin{proof}
       Let $w$ and $v$ be in equations \eqref{111120146} and \eqref{5hheq4}. By standard interior regularity and inequality \eqref{111120148} in Lemma \ref{111120147} and \eqref{5hh18091} in Lemma \ref{5hh21101319} we have 
       \begin{align*}
      ||\nabla v||_{L^\infty(Q_{R/2})}&\leq C \left(\fint_{Q_R}|\nabla v|^2dxdt\right)^{1/2}\\&\leq C \left(\fint_{Q_R}|\nabla w|^2dxdt\right)^{1/2}.
       \end{align*}   
       Thus, we get \eqref{5hh18092}
        from inequality \eqref{1111201410}  in Lemma \ref{111120149}.\\
        On the other hand, applying \eqref{5hh18094} in Lemma \ref{5hh21101319}  yields        
             \begin{eqnarray*}
                        \fint_{Q_R}|\nabla u-\nabla v|^2dxdt\leq \fint_{Q_R}|\nabla u-\nabla w|^2dxdt+ c_4 \left([A]_{s_1}^{R}\right)^2\fint_{Q_{2R}}|\nabla w|^2 dxdt. 
                         \end{eqnarray*}
                Hence, we get \eqref{5hh18093}
                            from \eqref{1111201410}  in Lemma \ref{111120149}. The proof is complete. 
       \end{proof}  
       \subsection{Boundary Estimates}
       In this subsection, we focus on the corresponding estimates near the boundary. \\
      Throughout this subsection, we always assume that $\Omega$ is a $(\delta,R_0)$- Reifenberg flat domain with $0<\delta\leq 1/2$.  In particular, we can see that the complement of $\Omega$ is uniformly 2-thick for some constants $c_0,r_0$, see \cite{55QH2}. Let $x_0\in \partial\Omega$ be a boundary point and  $0<R<R_0/6$ and $t_0\in (0,T)$,  we set $\tilde{\Omega}_{6R}=\tilde{\Omega}_{6R}(x_0,t_0)=\left(\Omega\cap B_{6R}(x_0)\right)\times (t_0-(6R)^2,t_0)$ and $Q_{6R}=Q_{6R}(x_0,t_0)$.
      
       We now consider the unique solution $w$ to the equation
       \begin{equation}
         \label{141120142}\left\{ \begin{array}{l}
           {w_t} - \operatorname{div}\left( {A(x,t,\nabla w)} \right) = 0 \;in\;\tilde{\Omega}_{6R}, \\ 
           w = u\quad \quad on~~\partial_{p}\tilde{\Omega}_{6R}. \\ 
           \end{array} \right.
         \end{equation}
         In what follows we extend $F$ and $u$ by zero to $\left(\Omega\times (-\infty,T)\right)^c$ and then extend $w$ by $u$ to $\mathbb{R}^{N+1}\backslash \tilde{\Omega}_{6R}$.
         \begin{lemma}\label{141120141}
         Let $w$ be in \eqref{141120142}.
               There exist  constants $\theta_2>2$ and $C>0$ depending only on $N,\Lambda_1,\Lambda_2$ such that the following estimate    
                \begin{equation}\label{141120143}
                \left(\fint_{Q_{\rho/2}(y,s)}|\nabla w|^{\theta_2} dxdt\right)^{\frac{1}{\theta_2}}\leq C\fint_{Q_{3\rho}(y,s)}|\nabla w| dxdt,
                \end{equation}
                 holds  for all  $Q_{3\rho}(z,s)\subset Q_{6R}$.
         \end{lemma}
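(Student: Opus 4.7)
The plan is to prove \eqref{141120143} by the classical Meyers--Gehring higher integrability scheme, adapted to the parabolic boundary setting. Since $w$ solves the homogeneous equation in $\tilde{\Omega}_{6R}$, inherits the zero lateral data of $u$ on $\partial\Omega\times(0,T)$ and is extended by $u\equiv 0$ outside $\tilde{\Omega}_{6R}$, the argument follows the interior Lemma \ref{111120147}, except that the Poincar\'e step must be strengthened to account for the possibility that $B_{3\rho}(z)$ crosses $\partial\Omega$.

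First I would establish a Caccioppoli inequality for $w$: for any sub-cylinder $Q_r(z,s)\subset Q_{6R}$, test \eqref{141120142} with $\eta^2(w-c)$, where $\eta$ is a smooth parabolic cutoff supported in $Q_r$ with $\eta\equiv 1$ on $Q_{r/2}$, $c=0$ whenever $B_r(z)\not\subset\Omega$, and $c$ is the slicewise spatial average of $w$ otherwise. Using \eqref{5hhconda}--\eqref{5hhcondb} and Young's inequality to absorb gradient terms, one obtains
\begin{equation*}
\int_{Q_{r/2}(z,s)}|\nabla w|^2\,dxdt \leq \frac{C}{r^2}\int_{Q_r(z,s)}|w-c|^2\,dxdt.
\end{equation*}

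Next I would invert this via a parabolic Sobolev--Poincar\'e inequality. In the interior case ($B_r(z)\subset \Omega$), the standard anisotropic Poincar\'e bound together with the equation to control time oscillations yields
\begin{equation*}
\left(\fint_{Q_r(z,s)}|w-c|^2\,dxdt\right)^{1/2}\leq C\,r\left(\fint_{Q_r(z,s)}|\nabla w|^{2\chi}\,dxdt\right)^{1/(2\chi)}
\end{equation*}
for some $\chi\in(N/(N+2),1)$. In the boundary case, since $\Omega$ is $(\delta,R_0)$-Reifenberg flat with $\delta\leq 1/2$, the complement $\Omega^c$ is uniformly $2$-thick and $w\equiv 0$ on $B_r(z)\setminus\Omega$, a set of positive $2$-capacity proportional to $r^{N-2}$; the capacitary Sobolev--Poincar\'e inequality of Maz'ya type then gives the same bound with $c=0$. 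Combining with the Caccioppoli inequality produces the sub-quadratic reverse H\"older estimate
\begin{equation*}
\left(\fint_{Q_{r/2}(z,s)}|\nabla w|^2\,dxdt\right)^{1/2} \leq C\left(\fint_{Q_{2r}(z,s)}|\nabla w|^{2\chi}\,dxdt\right)^{1/(2\chi)}
\end{equation*}
for every cylinder $Q_{2r}(z,s)\subset Q_{6R}$.

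Finally, a parabolic version of Gehring's lemma upgrades this self-improving estimate to higher integrability: there exists $\theta_2>2$, depending only on $N,\Lambda_1,\Lambda_2$ through the constants above, such that \eqref{141120143} holds, where the passage from the $L^{2\chi}$ average on the right to the $L^1$ average in \eqref{141120143} is handled by an application of H\"older's inequality together with the iteration of the one-step improvement, and the dilation from $\rho/2\to 3\rho$ absorbs the chain of concentric cylinders produced by the iteration. The main obstacle is the boundary Poincar\'e step: one must ensure that the $2$-capacity of $B_r(z)\setminus\Omega$ is bounded below by a universal constant times $r^{N-2}$, uniformly in $z$ and $r$, which is exactly the quantitative consequence of the $(\delta,R_0)$-Reifenberg flatness with $\delta\leq 1/2$. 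Once this thickness is secured, the capacitary Poincar\'e inequality supplies constants independent of the local geometry of $\partial\Omega$, and the rest of the argument is essentially identical to the interior case.
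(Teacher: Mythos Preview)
Your outline is correct and is precisely the standard route to boundary higher integrability; the paper itself does not give a proof but simply records that ``Above lemma was proved in \cite[Theorem 7.5]{55QH2}.'' The scheme in that reference is exactly the Caccioppoli plus capacitary Sobolev--Poincar\'e plus Gehring argument you sketch, and the paper even flags the key geometric input you isolate---that $(\delta,R_0)$-Reifenberg flatness with $\delta\le 1/2$ forces $\Omega^c$ to be uniformly $2$-thick---at the start of the boundary subsection.
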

         Above lemma was proved in \cite[Theorem 7.5]{55QH2}. Analogous to Lemma \ref{111120149} we obtain
          \begin{lemma}\label{141120144}Let $w$ be in \eqref{141120142}. There exists a constant $C=C(N,\Lambda_1,\Lambda_2)>0$ such that
                \begin{align}\label{141120145}
                \fint_{Q_{6R}}|\nabla u-\nabla w|^2dxdt\leq C\fint_{Q_{6R}}|F|^2dxdt.
                \end{align}
                \end{lemma}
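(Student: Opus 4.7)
The plan is to imitate the argument of Lemma \ref{111120149}, replacing the interior cylinder $Q_{2R}$ by the boundary cylinder $\tilde\Omega_{6R}=(\Omega\cap B_{6R}(x_0))\times (t_0-(6R)^2,t_0)$, and to use the fact that $u-w$ vanishes on the entire parabolic boundary $\partial_p\tilde\Omega_{6R}$. The latter is what makes $u-w$ an admissible test function: by construction $w=u$ on $\partial_p\tilde\Omega_{6R}\cap\{\text{lateral or bottom}\}$ and, on the part of the lateral boundary lying in $\partial\Omega\times(t_0-(6R)^2,t_0)$, the boundary trace of $u$ vanishes (and, by the extension convention, so does the trace of $w$). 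Once this has been noted, the computation is entirely analogous to the interior case.

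First I would subtract the weak formulations of \eqref{5hh070120148} and \eqref{141120142} and test against $u-w$ (justifying the manipulation of the time derivative by Steklov averaging in the usual way). This yields
\begin{equation*}
\tfrac{1}{2}\int_{\Omega\cap B_{6R}(x_0)}|u-w|^2(t_0)\,dx+\int_{\tilde\Omega_{6R}}\langle A(x,t,\nabla u)-A(x,t,\nabla w),\nabla u-\nabla w\rangle\,dxdt=\int_{\tilde\Omega_{6R}}\langle F,\nabla u-\nabla w\rangle\,dxdt,
\end{equation*}
where the initial (bottom) term vanishes because $u=w$ at $t=t_0-(6R)^2$. Dropping the nonnegative boundary term, using the monotonicity hypothesis \eqref{5hhcondb}, and applying Cauchy--Schwarz with Young's inequality, one obtains
\begin{equation*}
\tfrac{\Lambda_2}{2}\int_{\tilde\Omega_{6R}}|\nabla u-\nabla w|^2\,dxdt\le \tfrac{1}{2\Lambda_2}\int_{\tilde\Omega_{6R}}|F|^2\,dxdt.
\end{equation*}

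Finally I would pass to averages over $Q_{6R}$ rather than $\tilde\Omega_{6R}$. Because $w$ is extended by $u$ outside $\tilde\Omega_{6R}$ (so $\nabla u-\nabla w\equiv 0$ on $Q_{6R}\setminus\tilde\Omega_{6R}$) and because $F$ is extended by zero outside $\Omega_T$, both integrands can be freely replaced by their extensions to $Q_{6R}$. Dividing by $|Q_{6R}|$ then yields \eqref{141120145} with $C=\Lambda_2^{-2}$.

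The only non-routine point is the admissibility of $u-w$ as a test function on $\tilde\Omega_{6R}$; once the zero Dirichlet condition on $\partial\Omega$ combined with the boundary data in \eqref{141120142} is used to see that the lateral trace of $u-w$ vanishes, everything else is the same energy estimate as in the interior lemma. I do not expect any obstacle here: the fact that $\Omega$ is only Reifenberg flat plays no role at this stage (it enters only later, via the comparison with the homogeneous reference problem).
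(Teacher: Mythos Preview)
Your proposal is correct and is exactly the approach indicated in the paper, which simply states that the lemma is obtained ``analogous to Lemma \ref{111120149}'' without writing out a separate proof. You have filled in precisely the expected details: testing with $u-w$ (which vanishes on $\partial_p\tilde\Omega_{6R}$ since $w=u$ there), using \eqref{5hhcondb} and Young's inequality, and then passing from $\tilde\Omega_{6R}$ to $Q_{6R}$ via the extension conventions for $w$ and $F$.
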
 
  Next, we set $\rho=R(1-\delta)$ so that $0<\rho/(1-\delta)<R_0/6$. By the definition of Reifenberg flat domains,  there exists a coordinate system $\{y_1,y_2,...,y_N\}$ with the
  origin $0\in\Omega$ such that in this coordinate system $x_0=(0,...,0,-\rho\delta/(1-\delta))$ and 
  \begin{equation*}
  B^+_\rho(0)\subset \Omega\cap B_\rho(0)\subset B_\rho(0)\cap \{y=(y_1,y_2,....,y_N):y_N>-2\rho\delta/(1-\delta)\}.
  \end{equation*}
  Since $\delta<1/2$ we have 
   \begin{equation}\label{5hh1610138}
    B^+_\rho(0)\subset \Omega\cap B_\rho(0)\subset B_\rho(0)\cap \{y=(y_1,y_2,....,y_N):y_N>-4\rho\delta\},
    \end{equation}
    where $B^+_\rho(0):=B_\rho(0)\cap\{y=(y_1,y_2,...,y_N):y_N>0\}$.\\
    Furthermore we consider the unique solution
           \begin{equation*}
            v\in C(t_0-\rho^2,t_0;L^2(\Omega\cap B_\rho(0)))\cap L^2(t_0-\rho^2,t_0;H^1(\Omega\cap B_\rho(0)))
            \end{equation*}
            to the following equation
             \begin{equation}\label{5hh1610131}
             \left\{ \begin{array}{l}
                  {v_t} - \operatorname{div}\left( {\overline{A}_{B_{\rho}(0)}(t,\nabla v)} \right) = 0 \;in\;\tilde{\Omega}_\rho(0),\\ 
                  v = w\quad \quad on~~\partial_{p}\tilde{\Omega}_\rho(0), \\ 
                  \end{array} \right.
             \end{equation}
              where $\tilde{\Omega}_\rho(0)=\left(\Omega\cap B_{\rho}(0)\right)\times (t_0-\rho^2,t_0)$ ($0<t_0<T$). \\
              We put $v=w$ outside $\tilde{\Omega}_\rho(0)$. As Lemma \ref{5hh21101319} we have the following result. 
               \begin{lemma}\label{5hh1610139} Let $\theta_2$ be  the constant in Lemma \ref{141120141}. There exist positive constants $C_1=C_1(N,\Lambda_1,\Lambda_2)$ and  $C_2=C_2(\Lambda_1,\Lambda_2)$ such that  \begin{eqnarray}
                             \fint_{Q_{\rho}(0,t_0)}|\nabla w-\nabla v|^2dxdt\leq C_1 \left([A]_{s_2}^{R}\right)^2 \fint_{Q_{\rho}(0,t_0)}|\nabla w|^2dxdt, \label{5hh1610132}
                             \end{eqnarray}
                     with $s_1=\frac{2\theta_2}{\theta_2-2}$ and \begin{equation}\label{5hh1610133}
                             C_2^{-1} \int_{Q_{\rho}(0,t_0)}|\nabla v|^2dxdt\leq  \int_{Q_{\rho}(0,t_0)}|\nabla w|^2dxdt\leq  C_2\int_{Q_{\rho}(0,t_0)}|\nabla v|^2dxdt.
                              \end{equation}
                     \end{lemma}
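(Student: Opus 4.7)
My plan is to follow the pattern of the interior analogue (Lemma \ref{5hh21101319}), substituting Lemma \ref{141120141} for the interior higher-integrability Lemma \ref{111120147}. The first step is to establish the energy comparison \eqref{5hh1610133}: I would test \eqref{5hh1610131} against $v$ itself on $\tilde{\Omega}_\rho(0)$, use $v=w$ on the parabolic boundary to dispose of boundary contributions, and apply the structural bounds \eqref{5hhconda}, \eqref{5hhcondb} (which are inherited by the averaged operator $\overline{A}_{B_\rho(0)}(t,\cdot)$, since averaging in $x$ preserves both the growth bound and the monotonicity) to close a standard Cauchy--Schwarz/Young computation, giving the two-sided bound.

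Next, for \eqref{5hh1610132}, I would subtract \eqref{141120142} from \eqref{5hh1610131} and use $w-v$ as a test function on $\tilde{\Omega}_\rho(0)$. Since $w=v$ on $\partial_p\tilde{\Omega}_\rho(0)$, the time-derivative pairing yields the nonnegative quantity $\tfrac{1}{2}\int (w-v)^2(t_0)\,dx$, which can be dropped. Splitting
$$A(x,t,\nabla w)-\overline{A}_{B_\rho(0)}(t,\nabla v) = [A(x,t,\nabla w)-A(x,t,\nabla v)] + [A(x,t,\nabla v)-\overline{A}_{B_\rho(0)}(t,\nabla v)],$$
I would apply monotonicity \eqref{5hhcondb} to the first bracket, bound the second pointwise by $\Theta(A,B_\rho(0))(x,t)\,|\nabla v|$, and absorb a multiple of $|\nabla(w-v)|^2$ via Young's inequality, arriving at
$$\int_{Q_\rho(0,t_0)}|\nabla w-\nabla v|^2\,dxdt \leq C\int_{Q_\rho(0,t_0)}\Theta(A,B_\rho(0))(x,t)^2\,|\nabla v|^2\,dxdt.$$

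I would then split the right side by H\"older's inequality with the conjugate pair $\theta_2/2$ and $\theta_2/(\theta_2-2)$. The $\Theta^{s_2}$-factor is controlled by $([A]_{s_2}^{R})^2 |Q_\rho|^{2/s_2}$ directly from the $(\delta,R_0)$-BMO definition. The main obstacle is the $|\nabla v|^{\theta_2}$-factor: one needs to upgrade $\nabla v$ from $L^2$ to $L^{\theta_2}$ on $Q_\rho(0,t_0)$. I would resolve this by noting that $v$, extended by $w$ outside $\tilde{\Omega}_\rho(0)$, solves a parabolic equation of the same structural class as \eqref{141120142} (its principal part $\overline{A}_{B_\rho(0)}(t,\cdot)$ still obeys \eqref{5hhconda}--\eqref{5hhcondb}, and is in fact $x$-independent), while the Reifenberg flatness of $\Omega$ persists. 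Consequently the boundary reverse-H\"older inequality of Lemma \ref{141120141} applies verbatim to $v$, producing a bound on $\fint|\nabla v|^{\theta_2}$ by a power of $\fint|\nabla v|$. Combining with \eqref{5hh1610133} converts this to a multiple of $\fint_{Q_\rho(0,t_0)}|\nabla w|^2\,dxdt$, and substitution yields \eqref{5hh1610132}. The most delicate verification is thus that the proof of Lemma \ref{141120141} uses only the structural hypotheses and the Reifenberg geometry, both of which transfer from $A$ to its $x$-average.
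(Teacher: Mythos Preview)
Your overall plan mirrors the paper's (which simply refers to \cite[Lemma 7.3]{55QH2} and says the boundary case is analogous), but the execution of \eqref{5hh1610132} has a genuine gap in the higher-integrability step.

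With your splitting
\[
A(x,t,\nabla w)-\overline{A}_{B_\rho(0)}(t,\nabla v) = [A(x,t,\nabla w)-A(x,t,\nabla v)] + [A(x,t,\nabla v)-\overline{A}_{B_\rho(0)}(t,\nabla v)],
\]
you obtain $\int_{Q_\rho}|\nabla(w-v)|^2\le C\int_{Q_\rho}\Theta^2|\nabla v|^2$ and are then forced to control $\int_{Q_\rho}|\nabla v|^{\theta_2}$. Your justification --- that Lemma~\ref{141120141} ``applies verbatim to $v$'' because the extended $v$ solves an equation of the same structural class --- does not go through. The extension $v=w$ outside $\tilde{\Omega}_\rho(0)$ satisfies the $A$-equation there, while inside it satisfies the $\overline{A}$-equation; the extended $v$ is \emph{not} a weak solution of any single equation across the artificial lateral boundary $\partial B_\rho(0)\cap\Omega$, so the Caccioppoli/Gehring machinery fails on cylinders straddling it. Even if you argue by analogy (replacing $A,6R,u$ by $\overline{A},\rho,w$), the conclusion of Lemma~\ref{141120141} would only give reverse H\"older on cylinders $Q_{r/2}$ with $Q_{3r}\subset Q_\rho(0,t_0)$ --- hence $L^{\theta_2}$ control only on strict subcylinders, never on all of $Q_\rho$.

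The fix is to reverse the roles in your splitting:
\[
A(x,t,\nabla w)-\overline{A}_{B_\rho(0)}(t,\nabla v) = [A(x,t,\nabla w)-\overline{A}_{B_\rho(0)}(t,\nabla w)] + [\overline{A}_{B_\rho(0)}(t,\nabla w)-\overline{A}_{B_\rho(0)}(t,\nabla v)].
\]
Apply monotonicity \eqref{5hhcondb} (inherited by $\overline{A}$) to the second bracket and bound the first by $\Theta(A,B_\rho(0))(x,t)\,|\nabla w|$. This yields $\int_{Q_\rho}|\nabla(w-v)|^2\le C\int_{Q_\rho}\Theta^2|\nabla w|^2$, and now H\"older produces the factor $(\int_{Q_\rho}|\nabla w|^{\theta_2})^{2/\theta_2}$, to which Lemma~\ref{141120141} applies directly: $w$ solves its equation on the \emph{larger} domain $\tilde{\Omega}_{6R}$, and $Q_{6\rho}(0,t_0)\subset Q_{6R}(x_0,t_0)$ since $\rho=R(1-\delta)$. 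This is exactly the route in \cite{55QH2} for the interior analogue. (A minor point: for \eqref{5hh1610133} you should test by $v-w$, not by $v$; the one-sided bound comes from the energy estimate for $v$, and the reverse bound follows from $\int|\nabla(w-v)|^2\le C(\Lambda_1,\Lambda_2)\int|\nabla v|^2$, obtained from your original splitting with the crude estimate $|A(\nabla v)-\overline{A}(\nabla v)|\le 2\Lambda_1|\nabla v|$, together with the triangle inequality.)
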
              
                    We can see that if the boundary of $\Omega$ is irregular enough, then the $L^\infty$-norm of $\nabla v$ up to $\partial\Omega\cap B_\rho(0)\times (t_0-\rho^2,t_0)$ may not exist. For our purpose, we will consider another  equation: 
            \begin{equation}\label{5hh1610134}
               \left\{ \begin{array}{l}
       {V_t} - \operatorname{div}\left( {\overline{A}_{B_{\rho}(0)}(t,\nabla V)} \right) = 0 ~~\text{in}~~Q_\rho^+(0,t_0),\\ 
                V = 0\quad \quad \text{on}~~ T_\rho(0,t_0), \\ 
                            \end{array} \right.
                                             \end{equation}
                                             where $Q_\rho^+(0,t_0)=B_\rho^+(0)\times (t_0-\rho^2,t_0)$ and  $T_\rho(0,t_0)=Q_\rho(0,t_0)\cap\{x_N=0\}$.\\
A weak solution $V$ of above problem is understood in the following sense: the zero extension of $V$  to $Q_\rho(0,t_0)$ is in  $ C(t_0-\rho^2,t_0;L^2( B_\rho(0)))\cap L^2_{\text{loc}}(t_0-\rho^2,t_0;H^1( B_\rho(0)))$  and for every $\varphi\in C_c^1(Q_\rho^+(0,t_0))$ there holds
\begin{align*}
-\int_{Q_\rho^+(0,t_0)}V\varphi_tdxdt+\int_{Q_\rho^+(0,t_0)}\overline{A}_{B_{\rho}(0)}(t,\nabla V)\nabla \varphi dxdt=0. 
\end{align*}                                      
                                         We have the following  $L^\infty$ gradient estimate up to the boundary for $V$.   The following Lemma was obtained in \cite[Lemma 7.12]{55QH2}.

    \begin{lemma}\label{5hh21101314}
        For any $\varepsilon>0$ there exists a small $\delta_0=\delta_0(N,\Lambda_1,\Lambda_2,\varepsilon)\in (0,1/2)$ such that if $v\in C(t_0-\rho^2,t_0;L^2(\Omega\cap B_\rho(0)))\cap L^2(t_0-\rho^2,t_0;H^1(\Omega\cap B_\rho(0)))$ is a solution of \eqref{5hh1610131}
       and under condition \eqref{5hh1610138} with $\delta\in (0,\delta_0)$, there exists a weak solution $V\in C(t_0-\rho^2,t_0;L^2( B_\rho^+(0)))\cap L^2(t_0-\rho^2,t_0;H^1( B_\rho^+(0)))$ of  \eqref{5hh1610134}, whose zero extension to $Q_\rho(0,t_0)$ satisfies 
 \begin{align*}
 &||\nabla V||^2_{L^\infty(Q_{\rho/4}(0,t_0))}\leq C \fint_{Q_{\rho}(0,t_0)} |\nabla v|^2dxdt, \\&
  \fint_{Q_{\rho/8}(0,t_0)}|\nabla v-\nabla V|^2dxdt\leq \varepsilon^2\fint_{Q_{\rho}(0,t_0)} |\nabla v|^2dxdt,
 \end{align*}
    for some $C=C(N,\Lambda_1,\Lambda_2)>0$.
        \end{lemma}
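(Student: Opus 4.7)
My plan is to split the argument into two parts: constructing $V$ with the pointwise $L^\infty$ gradient bound, and establishing the quantitative closeness of $\nabla v$ and $\nabla V$ by a compactness-contradiction scheme that exploits the Reifenberg flatness in the limit $\delta\to 0$.

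For the existence and the $L^\infty$ bound, I would take $V$ to be the unique weak solution on $Q_\rho^+(0,t_0)$ of the reference equation \eqref{5hh1610134}, augmented by the natural lateral and initial data $V=v$ on $\partial_p Q_\rho^+(0,t_0)\setminus T_\rho(0,t_0)$. Existence and uniqueness follow from parabolic monotone operator theory using \eqref{5hhconda}--\eqref{5hhcondb}. Because $\overline{A}_{B_\rho(0)}(t,\cdot)$ is independent of $x$ and \eqref{181120141} holds, the Lieberman-type boundary regularity of \cite{55Li1,55Li2} applies up to the flat portion $T_\rho(0,t_0)$ and yields
\[
\|\nabla V\|^2_{L^\infty(Q^+_{\rho/4}(0,t_0))} \le C\fint_{Q^+_{\rho/2}(0,t_0)}|\nabla V|^2\,dx\,dt.
\]
Testing the two equations against $V-v$ and using \eqref{5hhcondb} bounds the right-hand side by $C\fint_{Q_\rho(0,t_0)}|\nabla v|^2\,dx\,dt$; since the zero extension of $V$ makes $\nabla V\equiv 0$ outside $Q_\rho^+$, the first claimed bound on the full $Q_{\rho/4}(0,t_0)$ follows.

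For the $L^2$ closeness, I would argue by contradiction. Fix $\varepsilon>0$ and assume the assertion fails: there are sequences $\delta_k\downarrow 0$, $(\delta_k,R_0)$-Reifenberg flat domains $\Omega_k$, solutions $v_k$ of \eqref{5hh1610131}, and the associated $V_k$, normalized by $\fint_{Q_\rho(0,t_0)}|\nabla v_k|^2\,dx\,dt=1$, yet $\fint_{Q_{\rho/8}(0,t_0)}|\nabla v_k-\nabla V_k|^2\,dx\,dt>\varepsilon^2$. The uniform energy bound and the inclusions \eqref{5hh1610138} give $v_k\rightharpoonup v_\ast$ and $V_k\rightharpoonup V_\ast$ weakly in $L^2(t_0-\rho^2,t_0;H^1(B_\rho(0)))$, after zero extension. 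The Hausdorff convergence $\Omega_k\cap B_\rho(0)\to B_\rho^+(0)$ forced by $\delta_k\to 0$ lets the vanishing trace of $v_k$ on $\partial\Omega_k$ transfer to the trace $v_\ast=0$ on $T_\rho(0,t_0)$. A Minty-type monotonicity argument, using \eqref{5hhcondb} and the linear growth \eqref{5hhconda}, identifies the weak limit of the nonlinearity and shows $v_\ast$ solves the same reference problem as $V_\ast$, so $v_\ast=V_\ast$ by uniqueness. Strong $L^2$-convergence of $\nabla v_k$ on $Q_{\rho/8}$, obtained from the energy identity for $v_k$ and the coincidence of limits, contradicts the lower bound $\varepsilon^2$, completing the argument.

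The main obstacle is the Step 2 passage to the limit under varying domains $\Omega_k$: namely, rigorously transferring the vanishing boundary trace of $v_k$ on $\partial\Omega_k$ to $v_\ast=0$ on $T_\rho$, and upgrading weak $H^1$-convergence to strong on $Q_{\rho/8}$. Both points are handled by localizing with cut-offs supported away from the flat boundary and a Hardy/trace estimate that controls the $L^2$-mass of $v_k$ in the thin strip $\{-4\delta_k\rho<y_N<0\}\cap B_\rho(0)$ by the Dirichlet energy, whose measure is $O(\delta_k\rho^{N+1})$. Once this is in place, the remaining compactness and Minty monotonicity steps are routine.
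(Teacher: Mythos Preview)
The paper provides no proof of this lemma; it simply cites \cite[Lemma 7.12]{55QH2}. Your compactness--contradiction scheme is the standard method for boundary approximation lemmas on Reifenberg flat domains (cf.\ Byun--Wang \cite{55BW1,55BW4}), and is almost certainly what is carried out in the cited reference, so your overall strategy is correct.

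There is, however, a gap in your Step~1. The function $V-v$ is \emph{not} an admissible test function: on $T_\rho$ you have $V=0$, but $v$ need not vanish there, since the zero trace of $v$ is inherited from $w=u=0$ on $\partial\Omega\cap B_\rho(0)$, which by \eqref{5hh1610138} lies in the strip $\{-4\delta\rho< y_N\le 0\}$ and not on $\{y_N=0\}$. Hence ``testing the two equations against $V-v$'' does not yield the energy bound $\fint_{Q_\rho^+}|\nabla V|^2\le C\fint_{Q_\rho}|\nabla v|^2$ as stated; at a minimum you would need a lift of the trace $v|_{T_\rho}$ back into $Q_\rho^+$ with controlled energy, which is possible but is an extra ingredient you have not supplied.

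The clean fix is to dispense with the explicit construction of $V$ and let the contradiction argument produce it. Run Step~2 on the sequence $v_k$ alone: after normalization, Aubin--Lions gives $v_k\to v_\ast$ strongly in $L^2$ and $\nabla v_k\rightharpoonup\nabla v_\ast$ weakly; your thin-strip Hardy estimate transfers the zero trace to $v_\ast=0$ on $T_\rho$, and Minty's trick identifies $v_\ast$ as a weak solution of \eqref{5hh1610134}. Now simply \emph{take} $V:=v_\ast$. The energy bound $\fint_{Q_\rho}|\nabla V|^2\le 1$ is immediate from weak lower semicontinuity, so the Lieberman boundary regularity of \cite{55Li1,55Li2} gives the $L^\infty$ gradient estimate directly. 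Finally, the localized energy identity (testing the equation for $v_k$ with $\eta^2(v_k-v_\ast)$ for a cutoff $\eta$ supported in $Q_{\rho/4}$, and using \eqref{5hhcondb}) upgrades weak to strong convergence of $\nabla v_k$ on $Q_{\rho/8}$, contradicting the lower bound $\varepsilon^2$. In this arrangement both conclusions of the lemma emerge from a single compactness argument and no separate Step~1 is needed.
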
       
       
\begin{theorem}\label{5hh16101310}
 Let $s_2$ be as in Lemma \ref{5hh1610139}. For any $\varepsilon>0$ there exists a small $\delta_0=\delta_0(N,\Lambda_1,\Lambda_2,\varepsilon)\in (0,1/2)$ such that the following holds. If $\Omega$ is a $(\delta,R_0)$-Reifenberg flat domain with $\delta\in (0,\delta_0)$, there is a function $V\in  L^2(t_0-(R/9)^2,t_0;H^1( B_{R/9}(x_0)))\cap L^\infty(t_0-(R/9)^2,t_0;W^{1,\infty}( B_{R/9}(x_0)))$ such that 
 \begin{equation}\label{5hh21101317}
 ||\nabla V||^2_{L^\infty(Q_{R/9}(x_0,t_0))}\leq C\fint_{Q_{6R}(x_0,t_0)}|\nabla u|^2dxdt+C \fint_{Q_{6R}(x_0,t_0)}|F|^2dxdt,
 \end{equation}
 and 
 \begin{align}
 \nonumber&\fint_{Q_{R/9}(x_0,t_0)}|\nabla u-\nabla V|^2dxdt\\&~~~~\leq C (\varepsilon^2+([A]_{s_2}^{R_0})^2)\fint_{Q_{6R}(x_0,t_0)}|\nabla u|^2dxdt+ C(\varepsilon^2+1+([A]_{s_2}^{R_0})^2)\fint_{Q_{6R}(x_0,t_0)}|F|^2dxdt,\label{5hh21101318}
 \end{align}
 for some $C=C(N,\Lambda_1,\Lambda_2)>0$.
 \end{theorem}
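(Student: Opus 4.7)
The plan is to chain together three comparison steps using the three intermediate solutions constructed in the subsection. Start by letting $w$ be the solution to \eqref{141120142} on $\tilde{\Omega}_{6R}(x_0,t_0)$, pass to the Reifenberg coordinate system with $\rho=R(1-\delta)$ and let $v$ solve \eqref{5hh1610131} on $\tilde{\Omega}_\rho(0)$, and finally apply Lemma \ref{5hh21101314} with the prescribed $\varepsilon$ to obtain a solution $V$ of \eqref{5hh1610134} on $Q_\rho^+(0,t_0)$. This fixes $\delta_0=\delta_0(N,\Lambda_1,\Lambda_2,\varepsilon)$.

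The first bookkeeping check is geometric: since $x_0=(0,\dots,0,-R\delta)$ in the new coordinate system, the cylinder $Q_{R/9}(x_0,t_0)$ sits inside $Q_{\rho/8}(0,t_0)\subset Q_{\rho/4}(0,t_0)\subset Q_\rho(0,t_0)\subset Q_{6R}(x_0,t_0)$ whenever $\delta_0$ is chosen sufficiently small (a short arithmetic check: one needs $\tfrac{1}{9}+\delta\le \tfrac{1-\delta}{8}$, which holds for $\delta<1/81$, so I can safely further shrink $\delta_0$). This lets me pull all integrals back to $Q_{6R}(x_0,t_0)$ while paying only fixed multiplicative constants in the averages since $|Q_{R/9}|/|Q_\rho|$ and $|Q_\rho|/|Q_{6R}|$ are bounded universally.

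Next I put in the analytic estimates in sequence. Applying the triangle inequality
\begin{align*}
|\nabla u-\nabla V|^2\le 3|\nabla u-\nabla w|^2+3|\nabla w-\nabla v|^2+3|\nabla v-\nabla V|^2,
\end{align*}
I bound the first term by $C\fint_{Q_{6R}}|F|^2$ via Lemma \ref{141120144}; the second by $C([A]_{s_2}^R)^2\fint_{Q_\rho(0,t_0)}|\nabla w|^2$ via \eqref{5hh1610132}; and the third by $C\varepsilon^2\fint_{Q_\rho(0,t_0)}|\nabla v|^2$ via the second inequality in Lemma \ref{5hh21101314}, then upgraded to $C\varepsilon^2\fint_{Q_\rho(0,t_0)}|\nabla w|^2$ by \eqref{5hh1610133}. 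A single auxiliary bound
\begin{align*}
\fint_{Q_\rho(0,t_0)}|\nabla w|^2\,dxdt\le C\fint_{Q_{6R}(x_0,t_0)}|\nabla u|^2\,dxdt+C\fint_{Q_{6R}(x_0,t_0)}|F|^2\,dxdt,
\end{align*}
obtained again from Lemma \ref{141120144} (triangle inequality in $L^2$ plus the ratio of measures), closes \eqref{5hh21101318}. Since $[A]_{s_2}^R\le [A]_{s_2}^{R_0}$ for $R\le R_0$, the constant $C(\varepsilon^2+[A]_{s_2}^{R_0})^2)$ in front of the $u$-average matches the statement.

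For the $L^\infty$ estimate \eqref{5hh21101317}, I use the inclusion $Q_{R/9}(x_0,t_0)\subset Q_{\rho/4}(0,t_0)$ and the first conclusion of Lemma \ref{5hh21101314}, then the second inequality of \eqref{5hh1610133}, and finally the same auxiliary bound on $\fint|\nabla w|^2$. The main obstacle is genuinely routine: it is the careful tracking of the cylinder inclusions between the two coordinate systems and the selection of $\delta_0$ small enough that both the geometric inclusions and the conclusions of Lemma \ref{5hh21101314} hold simultaneously; no new analytic ingredient is needed beyond the three lemmas already established.
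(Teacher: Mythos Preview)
Your proposal is correct and follows essentially the same approach as the paper: the same three-step comparison $u\to w\to v\to V$ via Lemmas \ref{141120144}, \ref{5hh1610139}, and \ref{5hh21101314}, combined with the chain of cylinder inclusions between the two coordinate systems. The only difference is cosmetic: the paper records the slightly more conservative threshold $\delta<1/625$ for the full chain $Q_{R/9}(x_0,t_0)\subset Q_{\rho/8}(0,t_0)\subset\cdots\subset Q_{6\rho}(0,t_0)\subset Q_{6R}(x_0,t_0)$, whereas you checked only the first inclusion and obtained $\delta<1/81$; the remaining inclusions are equally elementary and hold under a comparable smallness condition.
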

    \begin{proof}
 Let $x_0\in \partial \Omega$, $0<t_0<T$ and $\rho=R(1-\delta)$, we may assume that $0\in \Omega$, $x_0=(0,...,-\delta\rho/(1-\delta))$
 and 
    \begin{equation}
                                     B^+_\rho(0)\subset \Omega\cap B_\rho(0)\subset B_\rho(0)\cap \{x_N>-4\rho\delta\}.
                                                              \end{equation}
We have also 
\begin{equation}\label{5hh090520141}
Q_{R/9}(x_0,t_0)\subset Q_{\rho/8}(0,t_0)\subset Q_{\rho/4}(0,t_0)\subset Q_{\rho}(0,t_0)\subset Q_{6\rho}(0,t_0)\subset Q_{6R}(x_0,t_0),
\end{equation}                                                         provided that $0<\delta<1/625$.\\
Let $w$ and $v$ be as in Lemma \ref{141120144} and Lemma  \ref{5hh1610139}. 
    By Lemma  \ref{5hh21101314} for any $\varepsilon>0$                                                          we can find a small positive $\delta=\delta(N,\Lambda_1,\Lambda_2,\varepsilon)<1/625$ such that there is a function $V\in  L^2(t_0-\rho^2,t_0;H^1( B_{\rho}(0)))\cap L^\infty(t_0-\rho^2,t_0;W^{1,\infty}( B_{\rho}(0)))$ satisfying 
    \begin{align*}
   &  ||\nabla V||^2_{L^\infty(Q_{\rho/4}(0,t_0))}\leq c_1 \fint_{Q_{\rho}(0,t_0)} |\nabla v|^2dxdt,\\&
   \fint_{Q_{\rho/8}(0,t_0)}|\nabla v-\nabla V|^2\leq \varepsilon^2\fint_{Q_{\rho}(0,t_0)} |\nabla v|^2dxdt.
    \end{align*}    
        Then, by \eqref{5hh1610133} in Lemma  \ref{5hh1610139} and \eqref{5hh090520141} we get 
        \begin{align}
        \nonumber
                 ||\nabla V||^2_{L^\infty(Q_{R/9}(x_0,t_0))} &\leq c_2 \fint_{Q_{\rho}(0,t_0)} |\nabla w|^2dxdt\nonumber\\&\leq c_3 \fint_{Q_{6R}(x_0,t_0)} |\nabla w|^2dxdt, \label{5hh21101316}
        \end{align}       
        and 
        \begin{align}
                \fint_{Q_{R/9}(x_0,t_0)}|\nabla v-\nabla V|^2dxdt\leq c_4 \varepsilon^2\fint_{Q_{6R}(x_0,t_0)} |\nabla w|^2dxdt.\label{5hh21101320}
        \end{align}
        Therefore, from \eqref{141120145} in Lemma  \ref{141120144} and \eqref{5hh21101316} we get \eqref{5hh21101317}.\\
         Next we prove \eqref{5hh21101318}. Since \eqref{5hh090520141}, we have
         \begin{align*}
         &\fint_{Q_{R/9}(x_0,t_0)}|\nabla u-\nabla V|^2dxdt\leq c_5\fint_{Q_{\rho/8}(0,t_0)}|\nabla u-\nabla V|^2dxdt
                    \\&~~~~~~~~\leq c_6\fint_{Q_{\rho/8}(0,t_0)}|\nabla u-\nabla w|^2dxdt+c_6\fint_{Q_{\rho/8}(0,t_0)}|\nabla w-\nabla v|^2dxdt\\&~~~~~~~~~~+c_6\fint_{Q_{\rho/8}(0,t_0)}|\nabla v-\nabla V|^2dxdt.
         \end{align*}
   Using \eqref{141120145} in Lemma \ref{141120144} and \eqref{5hh1610132}, \eqref{5hh1610133} in Lemma \ref{5hh1610139} and \eqref{5hh21101320} we find that \begin{align*}
           &\fint_{Q_{\rho/8}(0,t_0)}|\nabla u-\nabla w|^2dxdt\leq c_6 \fint_{Q_{6R}(x_0,t_0)}|F|^2dxdt,
           \\&\fint_{Q_{\rho/8}(0,t_0)}|\nabla v-\nabla w|^2dxdt \leq c_{7}([A]_{s_2}^{R_0})^2 \fint_{Q_{6R}(0,t_0)}|\nabla w|^2dxdt\\&~~~~~~~~~~~~~~~~~~~~~~~~~~~~~~~~~~\leq  c_{8}([A]_{s_2}^{R_0})^2\left(\fint_{Q_{6 R}(x_0,t_0)}|\nabla u|^2dxdt+\fint_{Q_{6R}(x_0,t_0)}|F|^2dxdt\right),
           \end{align*}
           and 
           \begin{align*}           
          \fint_{Q_{\rho/8}(0,t_0)}|\nabla v-\nabla V|^2 dxdt \leq c_{9}\varepsilon^2 \left(\fint_{Q_{6 R}(x_0,t_0)}|\nabla u|^2dxdt+\fint_{Q_{6R}(x_0,t_0)}|F|^2dxdt\right).
           \end{align*}                                                
           Then we derive \eqref{5hh21101318}.  This completes the proof.

    \end{proof}  
    \section{Global integral gradient bounds for parabolic equations }
    The following good-$\lambda$ type estimate will be essential for our global estimates later. 
  \begin{theorem}\label{5hh23101312} Let $s_1,s_2$ be as in Lemma \ref{5hh21101319}, \ref{5hh1610139} and $s_0=\max\{s_1,s_2\}$. Let $w\in \mathbf{A}_\infty$, $F\in L^2(\Omega_T,\mathbb{R}^N)$. Let $u\in L^2(0,T;H^1_0(\Omega))$ be the weak solution to equation \eqref{5hh070120148} in $\Omega_T$. For any $\varepsilon>0,R_0>0$ one finds  $\delta_1=\delta_1(N,\Lambda_1,\Lambda_2,\varepsilon,[w]_{\mathbf{A}_\infty})\in (0,1/2)$ and $\delta_2=\delta_2(N,\Lambda_1,\Lambda_2,\varepsilon,[w]_{\mathbf{A}_\infty},T_0/R_0)\in (0,1)$ and $\Lambda=\Lambda(N,\Lambda_1,\Lambda_2)>0$ such that if $\Omega$ is  a $(\delta_1,R_0)$- Reifenberg flat domain and $[A]_{s_0}^{R_0}\le \delta_1$ then 
   \begin{equation}\label{5hh16101311}
   w(\{\mathcal{M}(|\nabla u|^2)>\Lambda\lambda, \mathcal{M}(|F|^2)\le \delta_2\lambda \}\cap \Omega_T)\le B\varepsilon w(\{ \mathcal{M}(|\nabla u|^2)> \lambda\}\cap \Omega_T)
   \end{equation}
   for all $\lambda>0$, 
      where the constant $B$  depends only on $N,\Lambda_1,\Lambda_2, T_0/R_0, [w]_{\mathbf{A}_\infty}$.
  \end{theorem}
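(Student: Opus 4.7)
I follow the Calder\'on--Zygmund good-$\lambda$ scheme, adapted to the parabolic, weighted setting: a local density lemma on cylinders derived from the interior and boundary Lipschitz comparisons (Theorems \ref{5hh24092} and \ref{5hh16101310}), a Vitali covering of the large set, and the $\mathbf{A}_\infty$ property to transfer the resulting Lebesgue density bound to a $w$-density bound.

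\textbf{Step 1 (Density lemma).} I would first prove that there exists $\Lambda=\Lambda(N,\Lambda_1,\Lambda_2)$ such that for every $\lambda>0$ and every cylinder $\tilde Q_r(x_0,t_0)$ with $(x_0,t_0)\in\Omega_T$ and $r$ small compared with $R_0$, if
\[
\tilde Q_r(x_0,t_0)\cap\{\mathcal{M}(|\nabla u|^2)\le\lambda\}\neq\emptyset\quad\text{and}\quad\tilde Q_r(x_0,t_0)\cap\{\mathcal{M}(|F|^2)\le\delta_2\lambda\}\neq\emptyset,
\]
then $|\{\mathcal{M}(|\nabla u|^2)>\Lambda\lambda\}\cap\tilde Q_r(x_0,t_0)|\le\varepsilon_1|\tilde Q_r(x_0,t_0)|$ with $\varepsilon_1=C(N,\Lambda_1,\Lambda_2)(\varepsilon^2+\delta_1^2+\delta_2)$. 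In the interior case $B_{6r}(x_0)\subset\Omega$, apply Theorem \ref{5hh24092} on a concentric cylinder of size $\sim r$ to obtain a Lipschitz comparison $V$ satisfying $\|\nabla V\|_{L^\infty}^2\lesssim\lambda$ and $\fint_{\tilde Q_r}|\nabla u-\nabla V|^2\lesssim(\delta_1^2+\delta_2)\lambda$; the two good points bound the averages of $|\nabla u|^2$ and $|F|^2$ on an enlargement of $\tilde Q_r$ by $C\lambda$ and $C\delta_2\lambda$ respectively. In the boundary case $\operatorname{dist}(x_0,\partial\Omega)<6r$, pick $\bar x_0\in\partial\Omega$ nearest to $x_0$ and apply Theorem \ref{5hh16101310} at $(\bar x_0,t_0)$ with radius $\sim r$, producing the same bounds with an extra $\varepsilon^2\lambda$ error. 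Fix $\Lambda>\max\{2^{N+2},2C_0\}$ with $C_0\lambda\ge\|\nabla V\|_{L^\infty}^2$. For any $(y,s)\in\tilde Q_r$ with $\mathcal{M}(|\nabla u|^2)(y,s)>\Lambda\lambda$, the first good point forces the optimal cylinder realizing the maximal function to have radius $\le cr$, hence lie inside the enlargement $\tilde Q_{Cr}(x_0,t_0)$; then $|\nabla u|^2\le 2|\nabla V|^2+2|\nabla u-\nabla V|^2$ yields $\mathcal{M}\bigl(|\nabla u-\nabla V|^2\chi_{\tilde Q_{Cr}}\bigr)(y,s)>\tfrac12\Lambda\lambda$. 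The weak-$(1,1)$ bound for $\mathcal{M}$ then gives
\[
|\{\mathcal{M}(|\nabla u|^2)>\Lambda\lambda\}\cap\tilde Q_r|\le\frac{C}{\Lambda\lambda}\int_{\tilde Q_{Cr}}|\nabla u-\nabla V|^2\le\varepsilon_1|\tilde Q_r|.
\]

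\textbf{Step 2 (Vitali covering).} Put $F_\lambda:=\{\mathcal{M}(|\nabla u|^2)>\lambda\}\cap\Omega_T$ and $E_\lambda:=\{\mathcal{M}(|\nabla u|^2)>\Lambda\lambda,\ \mathcal{M}(|F|^2)\le\delta_2\lambda\}\cap\Omega_T$. Extending $u,F$ by zero outside $\Omega_T$, for each $(x,t)\in F_\lambda$ the stopping radius $\rho(x,t):=\sup\{r>0:\fint_{\tilde Q_r(x,t)}|\nabla u|^2>\lambda\}$ is finite by Proposition \ref{111120144}. A Vitali extraction yields a disjoint subfamily $\{\tilde Q_{\rho_i/5}(x_i,t_i)\}$ with $F_\lambda\subset\bigcup_i\tilde Q_{\rho_i}(x_i,t_i)$. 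On each cylinder $\tilde Q_{\rho_i}$ meeting $E_\lambda$, the stopping-time definition supplies a good point for $|\nabla u|^2$ just outside radius $\rho_i$, while $E_\lambda\cap\tilde Q_{\rho_i}\neq\emptyset$ supplies the good point for $|F|^2$; Step 1 gives $|E_\lambda\cap\tilde Q_{\rho_i}|\le\varepsilon_1|\tilde Q_{\rho_i}|$.

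\textbf{Step 3 ($\mathbf{A}_\infty$ transfer and summation).} The $\mathbf{A}_\infty$ constants $(C_w,\nu)$ upgrade the Lebesgue density bound to $w(E_\lambda\cap\tilde Q_{\rho_i})\le C_w\varepsilon_1^\nu w(\tilde Q_{\rho_i})$. Doubling of $w$ gives $w(\tilde Q_{\rho_i})\le Cw(\tilde Q_{\rho_i/5})$, and any $(y,s)\in\tilde Q_{\rho_i/5}(x_i,t_i)$ satisfies $\tilde Q_{2\rho_i}(y,s)\supset\tilde Q_{\rho_i}(x_i,t_i)$, so that $\mathcal{M}(|\nabla u|^2)(y,s)>2^{-N-2}\lambda$; hence the disjoint cylinders $\tilde Q_{\rho_i/5}$ are contained in $F_{2^{-N-2}\lambda}$. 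Summing yields $w(E_\lambda)\le CC_w\varepsilon_1^\nu w(F_{2^{-N-2}\lambda})$; after rescaling $\lambda\mapsto 2^{N+2}\lambda$ (which only renames the numerical constants $\Lambda$ and $\delta_2$) this becomes $w(E_\lambda)\le CC_w\varepsilon_1^\nu w(F_\lambda)$. Choosing $\varepsilon,\delta_1,\delta_2$ small so that $CC_w\varepsilon_1^\nu\le B\varepsilon$ delivers \eqref{5hh16101311}.

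\textbf{Main obstacle.} The delicate point is forcing $\rho_i\le cR_0$ on every covering cylinder so that the Reifenberg and BMO hypotheses, hence Theorem \ref{5hh16101310}, apply. This is the origin of the dependence of $\delta_2$ on $T_0/R_0$: if $\rho_i$ were comparable to $T_0$, the global energy estimate of Proposition \ref{111120144} combined with $\mathcal{M}(|F|^2)\le\delta_2\lambda$ at some point of the covering cylinder would force $\lambda\le C(T_0/R_0)^{N+2}\delta_2\lambda$, which is a contradiction for $\delta_2$ small enough. A secondary technical concern is propagating the Reifenberg change of coordinates through the boundary density estimate so that all constants remain controlled by $N,\Lambda_1,\Lambda_2$.
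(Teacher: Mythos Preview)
Your argument is correct and rests on the same local density estimate as the paper (your Step~1 is essentially identical to the core of the paper's proof), but the covering step is organized differently. The paper does \emph{not} run a Calder\'on--Zygmund stopping time on $F_\lambda$; instead it invokes a fixed-radius Vitali-type lemma (Lemma~\ref{5hhvitali2}): one covers $\Omega_T$ by cylinders $\tilde Q_{r_0}(y_i,s_j)$ of a single small radius $r_0=\min\{R_0/1080,T_0\}$, verifies directly from the global energy bound (Proposition~\ref{111120144}) and the $\mathbf A_\infty$ property that $w(E_{\lambda,\delta_2})<\varepsilon\,w(\tilde Q_{r_0}(y_i,s_j))$ for every cell, and then checks the contrapositive density hypothesis of that lemma via the interior/boundary comparisons---exactly your Step~1. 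The lemma then outputs \eqref{5hh16101311} in one stroke, with the $\mathbf A_\infty$ transfer already packaged inside it.

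Your variable-radius route is the more classical Calder\'on--Zygmund decomposition and is equally valid, but it buys you two extra chores the paper avoids: you must explicitly bound the stopping radii by $cR_0$ (your ``Main obstacle'', which is indeed where the $T_0/R_0$ dependence of $\delta_2$ enters---the paper gets this for free by fixing $r_0\le R_0/1080$), and you must perform the $\mathbf A_\infty$ doubling/transfer and the summation over the disjoint family by hand in Step~3. Conversely, your approach is more self-contained since it does not rely on the black-box Lemma~\ref{5hhvitali2}. One small point to tidy in Step~3: the disjoint cylinders $\tilde Q_{\rho_i/5}$ need not lie inside $\Omega_T$, so the inclusion should read $\tilde Q_{\rho_i/5}(x_i,t_i)\cap\Omega_T\subset F_{c\lambda}$, and one then uses that the Reifenberg condition gives $|\tilde Q_{\rho_i/5}\cap\Omega_T|\ge c|\tilde Q_{\rho_i/5}|$ together with $\mathbf A_\infty$ to control $w(\tilde Q_{\rho_i/5})$ by $w(\tilde Q_{\rho_i/5}\cap\Omega_T)$.
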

  To prove above estimate, we will use  L. Caddarelli and I. Peral's technique in \cite{CaPe}. Namely, it is based on the following technical lemma whose proof is  a consequence of Lebesgue Differentiation Theorem and the standard Vitali covering  lemma, can be found in  \cite{55BW4,55MePh2} with some modifications to fit the setting here.

       \begin{lemma}\label{5hhvitali2} Let $\Omega$ be a $(\delta,R_0)$-Reifenberg flat domain with $\delta<1/4$ and let $w$ be an $\mathbf{A}_\infty$ weight. Suppose that the sequence of balls $\{B_r(y_i)\}_{i=1}^L$ with centers $y_i\in\overline{\Omega}$ and  radius $r\leq R_0/4$ covers $\Omega$. Set $s_i=T-ir^2/2$ for all $i=0,1,...,[\frac{2T}{r^2}]$. Let $E\subset F\subset \Omega_T$ be measurable sets for which there exists $0<\varepsilon<1$ such that  $w(E)<\varepsilon w(\tilde{Q}_r(y_i,s_j))$ for all $i=1,...,L$, $j=0,1,...,[\frac{2T}{r^2}]$; and  for all $(x,t)\in \Omega_T$, $\rho\in (0,2r]$, we have
               $\tilde{Q}_\rho(x,t)\cap \Omega_T\subset F$      
               if $w(E\cap \tilde{Q}_\rho(x,t))\geq \varepsilon w(\tilde{Q}_\rho(x,t))$. Then $
               w(E)\leq \varepsilon Bw(F)$         
               for a constant $B$ depending only on $N$ and $[w]_{\mathbf{A}_\infty}$.
              \end{lemma}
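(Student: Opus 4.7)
The plan is to run a Caffarelli--Peral stopping-time / Vitali covering argument adapted to parabolic cylinders and $\mathbf{A}_\infty$ weights. The key inputs are the Lebesgue differentiation theorem (valid $w$-a.e.\ because $w\in\mathbf{A}_\infty$ is doubling), the doubling property of $w$ on cylinders, and the Muckenhoupt equivalent characterization of $\mathbf{A}_\infty$ (subsets of positive relative Lebesgue measure carry positive relative $w$-measure). The first step is a stopping-time construction at each Lebesgue point $(x,t)\in E$: the density function $\rho\mapsto w(E\cap\tilde{Q}_\rho(x,t))/w(\tilde{Q}_\rho(x,t))$ is continuous in $\rho$ and tends to $1>\varepsilon$ as $\rho\to 0^+$. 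Whenever $\rho\geq 2r$, the cylinder $\tilde{Q}_\rho(x,t)$ engulfs some anchor cylinder $\tilde{Q}_r(y_i,s_j)$: choose $y_i$ with $x\in B_r(y_i)$ (so $B_r(y_i)\subset B_{2r}(x)\subset B_\rho(x)$) and $s_j$ with $|s_j-t|\leq r^2/2$ (possible since consecutive $s_j$ are spaced $r^2/2$ apart), and the bound $r^2\leq \rho^2/2$ then yields $(s_j-r^2/2,s_j+r^2/2)\subset(t-\rho^2/2,t+\rho^2/2)$. The first hypothesis therefore forces
\begin{equation*}
w(E\cap\tilde{Q}_\rho(x,t))\leq w(E)<\varepsilon\, w(\tilde{Q}_r(y_i,s_j))\leq\varepsilon\, w(\tilde{Q}_\rho(x,t))
\end{equation*}
for every $\rho\geq 2r$. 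By continuity, the largest $\rho_{(x,t)}\in(0,2r)$ at which the density equals $\varepsilon$ is well defined, and the density is strictly less than $\varepsilon$ for every $\rho>\rho_{(x,t)}$.

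I next apply Vitali's covering lemma in the parabolic quasimetric $d((x,t),(y,s))=\max(|x-y|,\sqrt{|t-s|})$ to $\{\tilde{Q}_{\rho_{(x,t)}}(x,t)\}$ and extract a countable disjoint subfamily $\{\tilde{Q}_{\rho_i}(x_i,t_i)\}_i$ whose $5$-dilates cover $E$ up to a $w$-null set. At each chosen center the density in $\tilde{Q}_{\rho_i}$ equals $\varepsilon$, so the second hypothesis yields $\tilde{Q}_{\rho_i}(x_i,t_i)\cap\Omega_T\subset F$. Since $5\rho_i>\rho_{(x_i,t_i)}$, the stopping-time construction gives $w(E\cap\tilde{Q}_{5\rho_i})\leq\varepsilon\, w(\tilde{Q}_{5\rho_i})$, and $\mathbf{A}_\infty$-doubling bounds $w(\tilde{Q}_{5\rho_i})\leq C_1 w(\tilde{Q}_{\rho_i})$ with $C_1=C_1(N,[w]_{\mathbf{A}_\infty})$, so
\begin{equation*}
w(E)\leq\sum_i w(E\cap 5\tilde{Q}_{\rho_i})\leq C_1\varepsilon\sum_i w(\tilde{Q}_{\rho_i}).
\end{equation*}
The Reifenberg condition $\delta<1/4$ together with $t_i\in(0,T)$ and $\rho_i\leq 2r$ gives $|\tilde{Q}_{\rho_i}\cap\Omega_T|\geq c|\tilde{Q}_{\rho_i}|$ (measure comparability for balls from Reifenberg flatness, combined with the elementary time-estimate $|(t_i-\rho_i^2/2,t_i+\rho_i^2/2)\cap(0,T)|\geq c\rho_i^2$). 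Muckenhoupt then supplies $w(\tilde{Q}_{\rho_i})\leq C_2 w(\tilde{Q}_{\rho_i}\cap\Omega_T)$ with $C_2=C_2(N,[w]_{\mathbf{A}_\infty})$, and disjointness combined with $\bigcup_i(\tilde{Q}_{\rho_i}\cap\Omega_T)\subset F$ produces $\sum_i w(\tilde{Q}_{\rho_i})\leq C_2 w(F)$. Combining gives $w(E)\leq B\varepsilon w(F)$ with $B=C_1 C_2$ depending only on $N$ and $[w]_{\mathbf{A}_\infty}$.

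The main technical delicacy is the stopping-time step: one must verify that $\tilde{Q}_\rho(x,t)$ engulfs an anchor cylinder $\tilde{Q}_r(y_i,s_j)$ as soon as $\rho\geq 2r$, which is precisely the reason the time-spacing in the statement is chosen to be $r^2/2$ rather than $r^2$ (so every $t\in(0,T)$ lies within $r^2/2$ of some $s_j$ and the time-inclusion holds for $\rho\geq\sqrt{2}\,r$). Once this is in place, the remainder is a routine fusion of Vitali with the two standard $\mathbf{A}_\infty$ consequences (doubling and Muckenhoupt measure-comparability).
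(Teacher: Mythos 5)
Your overall scheme --- stopping time at $w$-Lebesgue points, forcing the density below $\varepsilon$ for $\rho\geq 2r$ by engulfing an anchor cylinder $\tilde{Q}_r(y_i,s_j)$ (this is where the $r^2/2$ time spacing is used, exactly as you note), Vitali in the parabolic metric, then $\mathbf{A}_\infty$ doubling together with measure-density of $\Omega_T$ inside the selected cylinders --- is precisely the argument the paper delegates to \cite{55BW4,55MePh2}, and those steps are carried out correctly: the anchor inclusion $\tilde{Q}_r(y_i,s_j)\subset\tilde{Q}_\rho(x,t)$ for $\rho\geq 2r$, the existence and maximality of the stopping radius, the use of the exit-time cylinders in the second hypothesis, and the spatial density $|B_{\rho_i}(x_i)\cap\Omega|\geq c(N)|B_{\rho_i}(x_i)|$ from Reifenberg flatness with $\rho_i\leq 2r\leq R_0/2$ are all fine.

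The gap is the ``elementary time-estimate'' $|(t_i-\rho_i^2/2,t_i+\rho_i^2/2)\cap(0,T)|\geq c\rho_i^2$. For $t_i\in(0,T)$ one only has $|(t_i-\rho_i^2/2,t_i+\rho_i^2/2)\cap(0,T)|\geq\min\{T,\rho_i^2/2\}$, so your constant is universal only when $\rho_i^2\leq 2T$. The stopping radii can be as large as $2r$, and nothing in the hypotheses ties $r$ to $T$ (if $r^2>2T$ the grid degenerates to the single time $s_0=T$ and the statement still makes sense), so the key comparison $w(\tilde{Q}_{\rho_i})\leq C_2\,w(\tilde{Q}_{\rho_i}\cap\Omega_T)$ with $C_2=C_2(N,[w]_{\mathbf{A}_\infty})$ is unjustified in the regime $r^2\gg T$. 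Moreover this cannot be patched within the statement as written: take $w\equiv 1$, $\Omega=B_M(0)$ (which is $(1/8,M/8)$-Reifenberg flat), $T=1$, $r=M/32$, and $E=F=\Omega_T$; the second hypothesis is vacuous, the first reads $c_NM^N<\varepsilon\,c_N(M/32)^{N+2}$ and holds once $M\gtrsim\varepsilon^{-1/2}$, while the conclusion would force $B\geq 1/\varepsilon$, contradicting $B=B(N,[w]_{\mathbf{A}_\infty})$. So you must either let the constant depend on $\min\{1,T/r^2\}$ or add the (implicitly intended) restriction $\rho_i^2\leq 2T$, e.g.\ $2r\leq\sqrt{2T}$; under that extra hypothesis your proof is complete and coincides with the route the paper cites.
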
   
  \begin{proof}[Proof of Theorem \ref{5hh23101312}]Note that  $[A]_{s_1}^{R_0},[A]_{s_2}^{R_0}\leq [A]_{s_0}^{R_0}$. Let $\varepsilon \in (0,1)$.  Set $E_{\lambda,\delta_2}=\{\mathcal{M}(|\nabla u|^2)>\Lambda\lambda, \mathcal{M}(|F|^2)\leq \delta_2\lambda\}\cap \Omega_T $ and $F_\lambda=\{\mathcal{M}(|\nabla u|^2)>\lambda \}\cap \Omega_T$ for $\delta_2\in (0,1),\Lambda>0$ and $\lambda>0$.
    Let $\{y_i\}_{i=1}^L\subset \Omega$ and a ball $B_0$ with radius $2T_0$ such that 
   $$
    \Omega\subset \bigcup\limits_{i = 1}^L {{B_{r_0}}({y_i})}  \subset {B_0},$$
    where $r_0=\min\{R_0/1080,T_0\}$. Let 
    $s_j=T-jr_0^2/2$ for all $j=0,1,...,[\frac{2T}{r_0^2}]$ and $Q_{2T_0}=B_0\times (T-4T_0^2,T)$. So,
    \begin{equation*}
        \Omega_T\subset \bigcup\limits_{i,j} {{Q_{r_0}}({y_i,s_j})}  \subset {Q_{2T_0}}.
        \end{equation*} 
   We verify that
   \begin{equation}\label{5hh2310131}
   w(E_{\lambda,\delta_2})\leq \varepsilon w({\tilde{Q}_{r_0}}({y_i,s_j})) ~~\forall ~\lambda>0
   \end{equation}
   for some $\delta_2$ small enough depending on $n,p,\alpha,\beta,\epsilon,[w]_{\mathbf{A}_\infty},T_0/R_0$.\\
   In fact, we can assume that $E_{\lambda,\delta_2}\not=\emptyset$ so $\int_{\Omega_T}|F|^2dxdt\leq  c_1|Q_{2T_0}|\delta_2\lambda$. Recalling that $\mathcal{M}$ is a bounded operator from $L^1(\mathbb{R}^{N+1})$ into $L^{1,\infty}(\mathbb{R}^{N+1})$, we find 
 \begin{equation*}
 |E_{\lambda,\delta_2}|\leq \frac{c_2}{\Lambda\lambda}\int_{\Omega_T}|\nabla u|^2dxdt.
 \end{equation*}
 Using \eqref{111120145} in Proposition \ref{111120144}, we get
  \begin{align*}
   |E_{\lambda,\delta_2}|&\leq \frac{c_3}{\Lambda\lambda}\int_{\Omega_T}|F|^2dxdt\\&
   \leq c_4\delta_2 |Q_{2T_0}|,
  \end{align*}
  which implies
 \begin{align*}
    w(E_{\lambda,\delta_2})\leq C\left(\frac{|E_{\lambda,\delta_2}|}{|Q_{2T_0}|}\right)^\nu w(Q_{2T_0})\leq C\left(c_4\delta_2\right)^\nu w(Q_{2T_0}),
   \end{align*}
   where $(C,\nu)=[w]_{\mathbf{A}_\infty}$. It is well-known that (see, e.g \cite{55Gra}) there exist $C_1=C_1(N,C,\nu)$ and $\nu_1=\nu_1(N,C,\nu)$ such that 
   \begin{align*}
   \frac{w(\tilde{Q}_{2T_0})}{w({\tilde{Q}_{r_0}}({y_i,s_j}))}\leq C_1\left(\frac{|\tilde{Q}_{2T_0}|}{|{\tilde{Q}_{r_0}}({y_i,s_j})|}\right)^{\nu_1}~~\forall i,j.
   \end{align*}
  Therefore,
  \begin{align*}
      w(E_{\lambda,\delta_2})\leq C\left(c_4\delta_2\right)^\nu C_1\left(\frac{|\tilde{Q}_{T_0}|}{|{\tilde{Q}_{r_0}}({y_i,s_j})|}\right)^{\nu_1} w({\tilde{Q}_{r_0}}({y_i,s_j}))
      < \varepsilon w({\tilde{Q}_{r_0}}({y_i,s_j}))~~\forall ~i,j,
     \end{align*}
     where $\delta_2\leq \varepsilon^{1/\nu}\left(2CC_1 c_4^\nu(T_0r_0^{-1})^{(N+2)\nu_1}\right)^{-1/\nu}$. Thus \eqref{5hh2310131} follows.\\
  Next we verify that for all $(x,t)\in \Omega_T$, $r\in (0,2r_0]$ and $\lambda>0$ we have
    $
     \tilde{Q}_r(x,t)\cap \Omega_T\subset F_\lambda
     $
     provided $$
        w(E_{\lambda,\delta_2}\cap \tilde{Q}_r(x,t))\geq \varepsilon w(Q_r(x,t)),
      $$
     for some $\delta_2\leq \min\left\{1,\varepsilon^{1/\nu}\left(2CC_1 c_4^\nu(T_0r_0^{-1})^{(N+2)\nu_1}\right)^{-1/\nu}\right\}$.
        Indeed,
  take $(x,t)\in \Omega_T$ and $0<r\leq 2r_0$.
             Now assume that $\tilde{Q}_r(x,t)\cap \Omega_T\cap F^c_\lambda\not= \emptyset$ and $E_{\lambda,\delta_2}\cap \tilde{Q}_r(x,t)\not = \emptyset$ i.e, there exist $(x_1,t_1),(x_2,t_2)\in \tilde{Q}_r(x,t)\cap \Omega_T$ such that $\mathcal{M}(|\nabla u|^2)(x_1,t_1)\leq \lambda$ and $\mathcal{M}(|F|^2)(x_2,t_2)\le \delta_2 \lambda$.              
              We need to prove that
              \begin{equation}\label{5hh2310133}
                     w(E_{\lambda,\delta_2}\cap \tilde{Q}_r(x,t)))< \varepsilon w(\tilde{Q}_r(x,t)). 
                                      \end{equation}
         Using $\mathcal{M}(|\nabla u|^2)(x_1,t_1)\leq \lambda$, we can see that
                                      \begin{equation*}
                                      \mathcal{M}(|\nabla u|^2)(y,s)\leq \max\left\{\mathcal{M}\left(\chi_{\tilde{Q}_{2r}(x,t)}|\nabla u|^2\right)(y,s),3^{N+2}\lambda\right\}~~\forall (y,s)\in \tilde{Q}_r(x,t).
                                      \end{equation*}
           Therefore, for all $\lambda>0$ and $\Lambda\geq 3^{N+2}$,
           \begin{eqnarray}\label{5hh2310134}E_{\lambda,\delta_2}\cap \tilde{Q}_r(x,t)=\left\{\mathcal{M}\left(\chi_{\tilde{Q}_{2r}(x,t)}|\nabla u|^2\right)>\Lambda\lambda, \mathcal{M}(|F|^2)\leq \delta_2\lambda\right\}\cap \Omega_T \cap \tilde{Q}_r(x,t).
           \end{eqnarray}
           In particular, $E_{\lambda,\delta_2}\cap \tilde{Q}_r(x,t)=\emptyset$ if $\overline{B}_{8r}(x)\subset\subset \mathbb{R}^{N}\backslash \Omega$.
           Thus, it is enough to consider the case $B_{8r}(x)\subset\subset\Omega$ and the case $B_{8r}(x)\cap\Omega\not=\emptyset$.\\   
           First assume  $B_{8r}(x)\subset\subset\Omega$. Let $v$ be as in Theorem \ref{5hh24092} with $Q_{2R}=Q_{8r}(x,t_0)$ and  $t_0=\min\{t+2r^2,T\}$. We have  
           \begin{equation}\label{5hh2310135}
                  ||\nabla v||^2_{L^\infty(Q_{2r}(x,t_0))}\leq c_5 \fint_{Q_{8r}(x,t_0)}|\nabla u|^2 dxdt +c_5\fint_{Q_{8r}(x,t_0)}|F|^2 dxdt,
                  \end{equation}
                  and 
                  \begin{align*}
              \fint_{Q_{4r}(x,t_0)}|\nabla u-\nabla v|^2dxdt&\leq  c_5\fint_{Q_{8r}(x,t_0)}|F|^2 dxdt\\&~~~~~+ c_5([A]_{s_1}^{R})^2\left(\fint_{Q_{8r}(x,t_0)}|\nabla u|^2dxdt+ \fint_{Q_{8r}(x,t_0)}|F|^2 dxdt\right).
                  \end{align*}
  Thanks to $\mathcal{M}(|\nabla u|^2)(x_1,t_1)\leq \lambda$ and $\mathcal{M}(|F|^2)(x_2,t_2)\le \delta_2 \lambda$ with $(x_1,t_1),(x_2,t_2)\in Q_r(x,t)$, we find $Q_{8r}(x,t_0)\subset\tilde{Q}_{17r}(x_1,t_1),\tilde{Q}_{17r}(x_2,t_2) $ and 
  \begin{align}\nonumber
  ||\nabla v||^2_{L^\infty(Q_{2r}(x,t_0))}&\leq c_6 \fint_{\tilde{Q}_{17r}(x_1,t_1)}|\nabla u|^2 dxdt +c_6\fint_{\tilde{Q}_{17r}(x_2,t_2)}|F|^2 dxdt\\&\nonumber\leq
  c_6(1+\delta_2)\lambda\\&\leq
  c_7\lambda,\label{1411201410}
  \end{align}
and 
\begin{align}\nonumber
\fint_{Q_{4r}(x,t_0)}|\nabla u-\nabla v|^2dxdt&\leq  c_8\delta_2\lambda+ c_5([A]_{s_0}^{R})^2(1+\delta_2)\lambda\\&\leq 
c_9(\delta_2+\delta_1^2(1+\delta_2))\lambda.\label{1411201411}
\end{align}                                                   Here we used $[A]_{s_0}^{R_0}\leq \delta_1$ in the last inequality. \\                        
   In view of \eqref{1411201410} we see that for $\Lambda\geq \max\{3^{N+2},4c_{7}\}$,
 \begin{align*}                           |\{\mathcal{M}\left(\chi_{\tilde{Q}_{2r}(x,t)}|\nabla v|^2\right)>\Lambda\lambda/4\}\cap \tilde{Q}_r(x,t)|=0.
                           \end{align*}
                           Leads to
\begin{align*}
|E_{\lambda,\delta_2}\cap \tilde{Q}_r(x,t)|&\leq   |\{\mathcal{M}\left(\chi_{\tilde{Q}_{2r}(x,t)}|\nabla u-\nabla v|^2\right)>\Lambda\lambda/4\}\cap \tilde{Q}_r(x,t)|.                                                 
\end{align*}                           
Therefore, by  bound of operator $\mathcal{M}$ from $L^1(\mathbb{R}^{N+1})$ to $L^{1,\infty}(\mathbb{R}^{N+1})$ and \eqref{1411201411}, $\tilde{Q}_{2r}(x,t)\subset Q_{4r}(x,t_0)$ we deduce
\begin{align*}
|E_{\lambda,\delta_2}\cap \tilde{Q}_r(x,t)|&\leq  \frac{c_{10}}{\lambda}\int_{\tilde{Q}_{2r}(x,t)} |\nabla u-\nabla v|^2dxdt  \\&\leq  c_{11} \left(\delta_2+\delta_1^2(1+\delta_2)\right)|Q_r(x,t)|. 
\end{align*}
Thus,  
\begin{align*}
w(E_{\lambda,\delta_2}\cap \tilde{Q}_r(x,t))&\leq C\left(\frac{|E_{\lambda,\delta_2}\cap \tilde{Q}_r(x,t) |}{|\tilde{Q}_r(x,t)|}\right)^\nu w(\tilde{Q}_r(x,t))
    \\&\leq  C\left(c_{11} \left(\delta_2+\delta_1^2(1+\delta_2)\right)\right)^\nu w(\tilde{Q}_r(x,t))
    \\&< \varepsilon w(\tilde{Q}_r(x,t)).
\end{align*} 
    where $\delta_2,\delta_1$ are appropriately chosen and  $(C,\nu)=[w]_{\mathbf{A}_\infty}$.\\
    Next assume $B_{8r}(x)\cap\Omega\not=\emptyset$. Let $x_3\in\partial \Omega$ such that $|x_3-x|=\text{dist}(x,\partial\Omega)$. Set $t_0=\min\{t+2r^2,T\}$. We have 
    \begin{equation}\label{5hh2310138}
    Q_{2r}(x,t_0)\subset Q_{10r}(x_3,t_0)\subset Q_{540r}(x_3,t_0)\subset \tilde{Q}_{1080r}(x_3,t)\subset \tilde{Q}_{1088r}(x,t)\subset \tilde{Q}_{1089r}(x_1,t_1),
    \end{equation}
    and 
    \begin{equation}\label{5hh2310139}
        Q_{540r}(x_3,t_0)\subset \tilde{Q}_{1080r}(x_3,t)\subset \tilde{Q}_{1088r}(x,t)\subset \tilde{Q}_{1089r}(x_2,t_2).
        \end{equation}
     Let $V$ be as in Theorem 
    \ref{5hh16101310} with $Q_{6R}=Q_{540r}(x_3,t_0)$ and $\varepsilon=\delta_3\in (0,1)$. We have
    \begin{equation*}
     ||\nabla V||^2_{L^\infty(Q_{10r}(x_3,t_0))}\leq c_{12}\fint_{Q_{540r}(x_3,t_0)}|\nabla u|^2dxdt+c_{12} \fint_{Q_{540r}(x_3,t_0)}|F|^2dxdt,
     \end{equation*}
     and 
     \begin{align*}
     \nonumber&\fint_{Q_{10r}(x_3,t_0)}|\nabla u-\nabla V|^2dxdt\\&~~~~\leq c_{12} (\delta_3^2+([A]_{s_2}^{R_0})^2)\fint_{Q_{540r}(x_3,t_0)}|\nabla u|^2dxdt+ c_{12}(\delta_3^2+1+([A]_{s_2}^{R_0})^2)\fint_{Q_{540r}(x_3,t_0)}|F|^2dxdt.
     \end{align*}      
Since $\mathcal{M}(|\nabla u|^2)(x_1,t_1)\leq \lambda$,  $\mathcal{M}(|F|^2)(x_2,t_2)\le \delta_2 \lambda$ and \eqref{5hh2310138}, \eqref{5hh2310139} we get 
\begin{align*}
 ||\nabla V||^2_{L^\infty(Q_{10r}(x_3,t_0))}&\leq c_{13}\fint_{\tilde{Q}_{1089r}(x_1,t_1)}|\nabla u|^2dxdt+c_{13} \fint_{\tilde{Q}_{1089r}(x_1,t_1)}|F|^2dxdt\\&\leq
 c_{14}(1+\delta_2)\lambda
 \\&\leq c_{15}\lambda,
\end{align*}
and 
\begin{align}
     \nonumber&\fint_{Q_{10r}(x_3,t_0)}|\nabla u-\nabla V|^2dxdt\leq c_{16} \left((\delta_3^2+([A]_{s_2}^{R_0})^2)+ (\delta_3^2+1+([A]_{s_2}^{R_0})^2)\delta_2\right)\lambda\\&\leq c_{16} \left((\delta_3^2+\delta_1^2)+ (\delta_3^2+1+\delta_1^2)\delta_2\right)\lambda.\label{1411201412}
     \end{align} 
  Notice that we have used  $[A]_{s_0}^{R_0}\leq \delta_1$ in the last inequality.\\  
  Now set $\Lambda= \max\{3^{N+2},4c_{7},4c_{15}\}$. As above we also have  
 \begin{align*}
 |E_{\lambda,\delta_2}\cap \tilde{Q}_r(x,t)|&\leq   |\{\mathcal{M}\left(\chi_{\tilde{Q}_{2r}(x,t)}|\nabla u-\nabla V|^2\right)>\Lambda\lambda/4\}\cap \tilde{Q}_r(x,t)|.                         
 \end{align*}                         
 Therefore using \eqref{1411201412} we obtain 
 \begin{align*}
 |E_{\lambda,\delta_2}\cap \tilde{Q}_r(x,t)|&\leq  \frac{c_{17}}{\lambda}\int_{\tilde{Q}_{2r}(x,t)} |\nabla u-\nabla V|^2dxdt \\&\leq  c_{18} \left((\delta_3^2+\delta_1^2)+ (\delta_3^2+1+\delta_1^2)\delta_2\right)|\tilde{Q}_r(x,t)|. 
 \end{align*}
 Thus
 \begin{align*}
  w(E_{\lambda,\delta_2}\cap \tilde{Q}_r(x,t))&\leq C\left(\frac{|E_{\lambda,\delta_2}\cap \tilde{Q}_r(x,t)|}{|\tilde{Q}_r(x,t)|}\right)^\nu w(\tilde{Q}_r(x,t))
      \\&\leq  C\left(c_{18} \left((\delta_3^2+\delta_1^2)+ (\delta_3^2+1+\delta_1^2)\delta_2\right)\right)^\nu w(\tilde{Q}_r(x,t))
      \\&< \varepsilon w(\tilde{Q}_r(x,t)),
 \end{align*}   
     where $\delta_3,\delta_1,\delta_2$ are appropriately chosen and  $(C,\nu)=[w]_{\mathbf{A}_\infty}$.\\  
    Therefore, for all $(x,t)\in \Omega_T$, $r\in (0,2r_0]$ and $\lambda>0$, if  
                $$w(E_{\lambda,\delta_2}\cap \tilde{Q}_r(x,t))\geq \varepsilon w(\tilde{Q}_r(x,t)),$$                
then $$ \tilde{Q}_r(x,t)\cap \Omega_T\subset F_\lambda,$$ 
         where $\delta_1=\delta_1(N,\Lambda_1,\Lambda_2,\varepsilon,[w]_{\mathbf{A}_\infty})\in (0,1)$ and $\delta_2=\delta_2(N,\Lambda_1,\Lambda_2,\varepsilon,[w]_{\mathbf{A}_\infty},T_0/R_0)\in (0,1)$. Combining this with \eqref{5hh2310131}, we can apply Lemma \ref{5hhvitali2} to get the result.
              \end{proof}\medskip\\
\begin{proof}[Proof of Theorem \ref{101120143}]By Theorem \ref{5hh23101312}, for any $\varepsilon>0,R_0>0$ one finds  $\delta=\delta_1(N,\Lambda_1,\Lambda_2,\varepsilon,[w]_{\mathbf{A}_\infty})\in (0,1/2)$ and $\delta_2=\delta_2(N,\Lambda_1,\Lambda_2,\varepsilon,[w]_{\mathbf{A}_\infty},T_0/R_0)\in (0,1)$ and $\Lambda=\Lambda(N,\Lambda_1,\Lambda_2)>0,s_0=s_0(N,\Lambda_1,\Lambda_2)$ such that if $\Omega$ is  a $(\delta,R_0)$- Reifenberg flat domain and $[A]_{s_0}^{R_0}\le \delta$ then 
   \begin{equation}\label{1411201413}
   w(\{\mathcal{M}(|\nabla u|^2)>\Lambda\lambda, \mathcal{M}[|F|^2]\le \delta_2\lambda \}\cap \Omega_T)\le B\varepsilon w(\{ \mathcal{M}(|\nabla u|^2)> \lambda\}\cap \Omega_T),
   \end{equation}
   for all $\lambda>0$, 
      where the constant $B$  depends only on $N,\Lambda_1,\Lambda_2, T_0/R_0, [w]_{\mathbf{A}_\infty}$.
      Thus, for $s<\infty,$
      \begin{align*}
 ||\mathcal{M}(|\nabla u|^2)||_{L^{q,s}_w(\Omega_T)}^s&
 =q\Lambda^s\int_{0}^{\infty}\lambda^s\left(w(\{\mathcal{M}(|\nabla u|^2)>\Lambda\lambda \}\cap \Omega_T)\right)^{s/q}\frac{d\lambda}{\lambda} \\&\leq 
q\Lambda^s2^{s/q}(B\varepsilon)^{s/q}\int_{0}^{\infty}\lambda^s\left(w(\{\mathcal{M}(|\nabla u|^2)>\lambda \}\cap \Omega_T)\right)^{s/q}\frac{d\lambda}{\lambda} 
\\&+  q\Lambda^s2^{s/q}\int_{0}^{\infty}\lambda^s\left(w(\{\mathcal{M}(|F|^2)>\delta_2\lambda \}\cap \Omega_T)\right)^{s/q}\frac{d\lambda}{\lambda}
\\& = \Lambda^s2^{s/q}(B\varepsilon)^{s/q}||\mathcal{M}(|\nabla u|^2)||_{L^{q,s}_w(\Omega_T)}^s+\Lambda^s2^{s/q}\delta_2^{-s}||\mathcal{M}(|F|^2)||_{L^{q,s}_w(\Omega_T)}^s.
      \end{align*}
It implies
\begin{align*}
||\mathcal{M}(|\nabla u|^2)||_{L^{q,s}_w(\Omega_T)}\leq 2^{1/s}\Lambda2^{1/q}(B\varepsilon)^{1/q}||\mathcal{M}(|\nabla u|^2)||_{L^{q,s}_w(\Omega_T)}+2^{1/s}\Lambda 2^{1/q}\delta_2^{-1}||\mathcal{M}(|F|^2)||_{L^{q,s}_w(\Omega_T)}
\end{align*}
and this inequalities  is also true when $s=\infty$. \\
We can choose $\varepsilon=\varepsilon(N,\Lambda,s,q,B)>0$ such that   $2^{1/s}\Lambda2^{1/q}(B\varepsilon)^{1/q}\leq 1/2$, then we get the result.
\end{proof}\medskip\\                              
\begin{proof}[Proof of Theorem \ref{161120141}] We recall that $A(x,t,\xi)=A(x,t)\xi$ where $A(x,t)$ is a matrix.\\
\textbf{a.}  Fix $q>2, 0<s\leq \infty$, $w\in \mathbf{A}_{q/2}$. Assume $|||F|||_{L^{q,s}_w(\Omega_T)}<\infty$. So, $F\in L^2(\Omega_T,\mathbb{R}^N)$ and problem \eqref{161120143} with $\mu\equiv0,\sigma\equiv 0$ has a unique weak solution $v_1\in L^2(0,T,H_0^{1}(\Omega))$. By Theorem \ref{101120141}, 
we find a $\delta_1=\delta_1(N,\Lambda_1,\Lambda_2,q,s, [w]_{\mathbf{A}_{q/2}})\in (0,1)$ such that if $\Omega$ is  a $(\delta_1,R_0)$-Reifenberg flat domain  and $[A]_{s_0}^{R_0}\le \delta_1$ for some $R_0>0$ then                     
                                                                                                                                    \begin{equation}\label{161120144**}
                                                                                                                                    |||\nabla v_1|||_{L^{q,s}_w(\Omega_T)}\leq c_1 |||F|||_{L^{q,s}_w(\Omega_T)},
                                                                                                                                    \end{equation} 
                                                                  where $c_1=c_1(N,\Lambda_1,\Lambda_2,q,s, [w]_{\mathbf{A}_{q/2}},T_0/R_0)$.\\
                                                                  Moreover,  by \cite[Theorem 2.20]{55QH2}, there exists a distribution  solution $v_2\in L^1(0,T,W_0^{1,1}(\Omega))$ of \eqref{161120143} with $F\equiv 0$ and  $\delta_2=\delta_2(N,\Lambda_1,\Lambda_2,q,s, [w]_{\mathbf{A}_{q/2}})\in (0,1)$ such that if $\Omega$ is  a $(\delta_2,R_0)$-Reifenberg flat  and $[A]_{s_0}^{R_0}\le \delta_2$ for some $R_0>0$, then there holds                            
                                                                                                                                 \begin{equation}\label{161120144*}
                                                                                                                                    |||\nabla v_2|||_{L^{q,s}_w(\Omega_T)}\leq c_2 ||\mathcal{M}_1[\omega]||_{L^{q,s}_w(\Omega_T)},
                                                                                                                                    \end{equation}
                                                                                                                                    where $c_2=c_2(N,\Lambda_1,\Lambda_2,q,s, [w]_{\mathbf{A}_{q/2}},T_0/R_0)$. In particular, $v_2\in L^2(0,T,H_0^1(\Omega))$.  \\
  Obviously, $u:=v_1+v_2$ is a unique weak solution of \eqref{161120143} in $L^2(0,T,H_0^1(\Omega))$ and from \eqref{161120144**}-\eqref{161120144*} we obtain \eqref{161120144} where $\Omega$ is a $(\delta,R_0)$-flat  and $[A]_{s_0}^{R_0}\le \delta$ with $\delta=\min\{\delta_1,\delta_2\}.$\\
\textbf{b.} Using the previous argument, we only show statement \textbf{b} in case $\mu\equiv 0,\sigma\equiv 0$. \\
Fix $\varepsilon\in (0,1), \frac{2(\varepsilon+1)}{\varepsilon+2}<q\leq 2, 0<s\leq \infty$, $w^{2+\varepsilon}\in \mathbf{A}_{1}$ and assume $\mathcal{M}_1[\omega], |F|\in L^{q,s}_w(\Omega_T)$. Set $p=\frac{2(\varepsilon+1)}{\varepsilon+2}$. \\
\textbf{b.1.}~  We prove  that there is a $\delta_3=\delta_3(N,\Lambda_1,\Lambda_2,\varepsilon)\in (0,1)$ such that if  $\Omega$ is a $(\delta,R_0)$-Reifenberg flat domain for some $R_0>0$, then problem \eqref{161120143} with $\mu\equiv0,\sigma\equiv 0$ has a unique weak solution $v_3\in L^p(0,T,W_0^{1,p}(\Omega))$.\\  Clearly, if $A^{*}(x,t,\xi)=A^{*}(x,t)\xi$, where $A^{*}(x,t)$ is the transposed matrix of $A(x,t)$ then $A^*$ satisfies \eqref{5hhconda} and \eqref{5hhcondb} with the same constants and  $[A^{*}]_{s_0}^{R_0}=[A]_{s_0}^{R_0}$.\\
By Theorem \ref{101120141} there exists $\delta_3=\delta_3(N,\Lambda_1,\Lambda_2,\varepsilon)\in (0,1)$ such that if $\Omega$ is  $(\delta_3,R_0)$-flat  and $[A]_{s_0}^{R_0}\le \delta_3$ for some $R_0>0$ there holds 
\begin{align}\label{171120144*}
|||\nabla \varphi |||_{L^{p'}(\Omega_T)}\leq c_3 |||G|||_{L^{p'}(\Omega_T)}~~\forall G\in C^\infty(\overline{\Omega}_T,\mathbb{R}^N),
\end{align}
for some constant $c_3$, where $\varphi$ is a unique solution to the problem 
\begin{equation}\label{171120145}
                       \left\{
                                       \begin{array}
                                       [c]{l}%
                                       -{\varphi_{t}}-\operatorname{div}(A^{*}(x,t)\nabla \varphi)=\operatorname{div}(G)~~\text{in }\Omega_T,\\ 
                         u=0~~~~~~~\text{on}~~
                                                                                              \partial \Omega \times (0,T),\\
                                                               u(T)=0~~\text{in }~\Omega. 
                                                                                                 \\                          
                                       \end{array}
                                       \right.  
                                       \end{equation}
   Let $F_n\in C_c^\infty(\Omega_T,\mathbb{R}^N)$ converge to $F$ in $L^p(\Omega_T,\mathbb{R}^N)$ and $u_n$ be a solution of problem \eqref{161120143} with $F=F_n$ and  $\mu\equiv0,\sigma\equiv 0$. We can choose  $\varphi$ for test function,
  \begin{align*}
  -\int_{\Omega_T}\nabla u_n G dxdt&=-\int_{\Omega_T}u_n\varphi_tdxdt+\int_{\Omega_T}A^{*}(x,t)\nabla\varphi\nabla u_ndxdt\\& =-\int_{\Omega_T}u_n\varphi_tdxdt+\int_{\Omega_T}A(x,t)\nabla u_n\nabla\varphi dxdt\\&=-
  \int_{\Omega_T}\nabla\varphi F_n.
  \end{align*} 
  Using H\"older inequality and \eqref{171120144*} yield                                 
 \begin{align*}
 |\int_{\Omega_T}\nabla u_n G dxdt|\leq c_3 |||G|||_{L^{p'}(\Omega_T)}|||F_n|||_{L^{p}(\Omega_T)}~~\forall G\in C^\infty(\overline{\Omega}_T,\mathbb{R}^N),
 \end{align*}
 it implies 
 \begin{align*}
 |||\nabla u_n|||_{L^{p}(\Omega_T)}\leq c_3|||F_n|||_{L^{p}(\Omega_T)}.
 \end{align*}
 By linearity of $A$ we get 
  \begin{align*}
   |||\nabla u_n-\nabla u_m|||_{L^{p}(\Omega_T)}\leq c_3|||F_n-F_m|||_{L^{p}(\Omega_T)}\to 0 ~\text{ as}~ n,m\to \infty.
   \end{align*}                                     
 Thus, $u_n$ converges to some function $v_3$ in $L^{p}(0,T,W_0^{1,p}(\Omega))$. Obviously, $v_3$ is  a unique weak solution in $L^{p}(0,T,W_0^{1,p}(\Omega))$ of problem \eqref{161120143} with $\mu\equiv0,\sigma\equiv 0$.\medskip \\  
\textbf{b.2.}  Set $\overline{w}(x,t)=(w(x,t))^{-\frac{1}{p-1}}$. We have $\overline{w}\in \mathbf{A}_{p'/2}$ and $[\overline{w}]_{\mathbf{A}_{p'/2}}= [w^{2+\varepsilon}]_{\mathbf{A}_{1+\varepsilon}}^{\frac{1}{\varepsilon}}\leq  [w^{2+\varepsilon}]_{\mathbf{A}_{1}}^{\frac{1}{\varepsilon}}$. By Theorem \ref{101120141} there exists $\delta_4=\delta_4(N,\Lambda_1,\Lambda_2,\varepsilon, [w^{2+\varepsilon}]_{\mathbf{A}_{1}})\in (0,1)$ such that if $\Omega$ is  $(\delta_4,R_0)$-flat  and $[A]_{s_0}^{R_0}\le \delta_4$ for some $R_0>0$ there holds 
\begin{align}\label{171120144}
|||\nabla \varphi |||_{L^{p'}_{\overline{w}}(\Omega_T)}\leq c_4 |||G|||_{L^{p'}_{\overline{w}}(\Omega_T)}~~\forall G\in C^\infty(\overline{\Omega}_T,\mathbb{R}^N),
\end{align}
where $\varphi$ is a unique solution to problem \eqref{171120145} and $c_4=c_4(N,\Lambda_1,\Lambda_2,\varepsilon, [w^{2+\varepsilon}]_{\mathbf{A}_{1}})$. \\
Using 
$
\int_{\Omega_T}\nabla u G dxdt=
\int_{\Omega_T}\nabla\varphi F
$, H\"older inequality and \eqref{171120144} we find
\begin{align*}
|\int_{\Omega_T}\nabla u G dxdt|\leq c_4 |||F|||_{L^{p}_w(\Omega_T)}|||G|||_{L^{p'}_{\overline{w}}(\Omega_T)}~~\forall G\in C^\infty(\overline{\Omega}_T,\mathbb{R}^N).
\end{align*}
Thus, we obtain 
\begin{align*}
|||\nabla u|||_{L^{p}_w(\Omega_T)}\leq c_4 |||F|||_{L^{p}_w(\Omega_T)}.
\end{align*}
On the other hand, by statement \textbf{a} there exist $\delta_5=\delta_5(N,\Lambda_1,\Lambda_2, [w^{2+\varepsilon}]_{\mathbf{A}_{1}})\in (0,1)$ such that if $\Omega$ is  $(\delta_5,R_0)$-flat  and $[A]_{s_0}^{R_0}\le \delta_5$ for some $R_0>0$ there holds
\begin{align*}
|||\nabla u|||_{L^{3}_w(\Omega_T)}\leq c_5 |||F|||_{L^{3}_w(\Omega_T)}.
\end{align*} 
for some $c_5=c_5(N,\Lambda_1,\Lambda_2,\varepsilon, [w^{2+\varepsilon}]_{\mathbf{A}_{1}})$.
We now denote map $\mathcal{J}:(L^{p}_w(\Omega_T))^N\to L^{p}_w(\Omega_T)$ by $\mathcal{J}(f):=|\nabla v|$ for any $f \in (L^{p}_w(\Omega_T))^N$ where $v$ is the unique weak solution of problem \eqref{161120143} with $\mu\equiv0,\sigma\equiv 0$ and $F=f$. We see that $\mathcal{J}$ is a sublinear operator and 
\begin{align*}
||\mathcal{J}(f_1)||_{L^{3}_w(\Omega_T)}\leq c_5 |||f_1|||_{L^{3}_w(\Omega_T)}~~\forall~f_1\in~(L^{3}_w(\Omega_T))^N 
\end{align*} 
and 
\begin{align*}
||\mathcal{J}(f_2)||_{L^{p}_w(\Omega_T)}\leq c_4 |||f_2|||_{L^{p}_w(\Omega_T)}~~\forall~f_2\in~(L^{p}_w(\Omega_T))^N
\end{align*}
where $\Omega$ is  $(\delta,R_0)$-Reifenberg flat  and $[A]_{s_0}^{R_0}\le \delta$ with $\delta=\min\{\delta_4,\delta_5\}$.
Thank to the interpolation Theorem, see \cite[Theorem 1.4.19]{55Gra} we get the statement \textbf{b}. This completes the proof. 
\end{proof}                                  
                              
             \section{Quasilinear Riccati type parabolic equations}  
 
     To prove Theorem \ref{5hh260320144} we need the following Lemma: 
     \begin{lemma}\label{171120146}Let $ \gamma\geq 1$ and $H_1,H_2$ be measurable functions in $\mathbb{R}^N$. If 
     \begin{align*}
     \int_{\Omega_T}|H_1|wdxdt\leq C(\gamma, [w^{\gamma}]_{\mathbf{A}_{1}})\int_{\Omega_T}|H_2|wdxdt~~\forall ~ w^{\gamma}\in \mathbf{A}_1,
     \end{align*}
     then for any $p>\frac{(N+2)(\gamma-1)}{\gamma}$, 
    \begin{align*}                              [|H_1|]_{\operatorname{Cap}_{\mathcal{G}_1,p}} \leq  C  [|H_2|]_{\operatorname{Cap}_{\mathcal{G}_1,p}},                                                          
                                                 \end{align*}
            where $C=C(N,p,\gamma,T_0)$, for measurable function $H$ in $\mathbb{R}^N$,  $[H]_{\operatorname{Cap}_{\mathcal{G}_1,p}} $ is denoted by 
            \begin{align*}
            [|H|]_{\operatorname{Cap}_{\mathcal{G}_1,p}} =\sup\frac{\int_{K}|H|dxdt}{\operatorname{Cap}_{\mathcal{G}_1,p}(K)}, 
            \end{align*}the suprema being taken over all compact sets $K\subset\mathbb{R}^{N+1}$.  
     \end{lemma}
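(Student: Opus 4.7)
The idea is to apply the hypothesis with a weight $w$ specifically tailored to each compact set $K\subset\mathbb{R}^{N+1}$. Concretely, for each such $K$ with $\operatorname{Cap}_{\mathcal{G}_1,p}(K)<\infty$, I will construct a measurable $w\ge 0$ such that $w\ge 1$ on $K$, such that $w^\gamma\in\mathbf{A}_1$ with $[w^\gamma]_{\mathbf{A}_1}$ bounded by a constant depending only on $N,p,\gamma$, and such that
\[
\int_{\Omega_T}|H_2|\,w\,dxdt\le C\operatorname{Cap}_{\mathcal{G}_1,p}(K)\,[|H_2|]_{\operatorname{Cap}_{\mathcal{G}_1,p}}.
\]
Granted these three properties, the hypothesis applied to this $w$ gives
\[
\int_K|H_1|\,dxdt\le\int_{\Omega_T}|H_1|\,w\,dxdt\le C\int_{\Omega_T}|H_2|\,w\,dxdt\le C\operatorname{Cap}_{\mathcal{G}_1,p}(K)\,[|H_2|]_{\operatorname{Cap}_{\mathcal{G}_1,p}},
\]
and taking the supremum over $K$ completes the proof.

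\textbf{Construction of $w$.} Pick a near-extremal $f\in L^{p}_{+}(\mathbb{R}^{N+1})$ realizing the capacity, so that $\mathcal{G}_1*f\ge\chi_K$ and $\|f\|_{L^{p}}^{p}\le 2\operatorname{Cap}_{\mathcal{G}_1,p}(K)$, and set $w:=(\mathcal{G}_1*f)^{\beta}$ for a parameter $\beta\in(0,1/\gamma)$ to be chosen. The lower bound $w\ge 1$ on $K$ is then automatic. The $\mathbf{A}_1$-membership of $w^{\gamma}=(\mathcal{G}_1*f)^{\beta\gamma}$ with a uniform constant is a Coifman--Rochberg-type statement for positive parabolic Bessel potentials: a small enough power of such a potential lies in $\mathbf{A}_1$ with constant independent of the base function. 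The quantitative threshold on $\beta\gamma$ admits some $\beta>0$ precisely when $p>(N+2)(\gamma-1)/\gamma$; the critical exponent comes out of the Hedberg-type pointwise bound $\mathcal{G}_1*f(x,t)\lesssim\|f\|_{L^{p}}^{p/(N+2)}(\mathcal{M}f(x,t))^{1-p/(N+2)}$, which reduces the $\mathbf{A}_1$-membership of $(\mathcal{G}_1*f)^{\beta\gamma}$ to the classical $\mathbf{A}_1$-membership of $(\mathcal{M}f)^{\delta}$ with $\delta=\beta\gamma(1-p/(N+2))<1$.

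\textbf{Controlling $\int|H_2|w$.} By the layer-cake formula, writing $d\mu_{H_2}=|H_2|\chi_{\Omega_T}\,dxdt$,
\[
\int_{\Omega_T}|H_2|\,w\,dxdt=\beta\int_{0}^{\infty}\mu_{H_2}\bigl(\{\mathcal{G}_1*f>t\}\bigr)\,t^{\beta-1}\,dt.
\]
Since the level set $\{\mathcal{G}_1*f>t\}$ is open, inner regularity of the Bessel capacity and the defining property of $[|H_2|]_{\operatorname{Cap}_{\mathcal{G}_1,p}}$ give the pointwise bound $\mu_{H_2}(\{\mathcal{G}_1*f>t\})\le [|H_2|]_{\operatorname{Cap}_{\mathcal{G}_1,p}}\operatorname{Cap}_{\mathcal{G}_1,p}(\{\mathcal{G}_1*f>t\})$ on the integrand. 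The weak-type bound
\[
\operatorname{Cap}_{\mathcal{G}_1,p}\bigl(\{\mathcal{G}_1*f>t\}\bigr)\le t^{-p}\|f\|_{L^{p}}^{p}\le 2t^{-p}\operatorname{Cap}_{\mathcal{G}_1,p}(K),
\]
which is immediate from applying the definition of capacity to $t^{-1}f$, combined with a trivial upper bound on $\operatorname{Cap}_{\mathcal{G}_1,p}$ for sets inside a $T_0$-neighborhood of $\Omega_T$ for the small-$t$ regime, splits the $t$-integral at an optimal scale and yields the desired estimate with the stated constant dependence.

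\textbf{Main obstacle.} The delicate step is the quantitative $\mathbf{A}_1$-membership of $(\mathcal{G}_1*f)^{\beta\gamma}$ with $\mathbf{A}_1$-constant independent of $f$ (hence of $K$). This is essentially a Coifman--Rochberg/Hedberg argument transferred to parabolic Bessel potentials, and the precise critical exponent $(N+2)(\gamma-1)/\gamma$ in the hypothesis marks exactly the boundary beyond which no admissible $\beta>0$ exists; the remaining steps of the layer-cake estimate and the application of the hypothesis are then routine.
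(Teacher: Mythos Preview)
Your overall strategy---pick a weight adapted to the compact set $K$, apply the hypothesis, and control the $H_2$-integral via layer-cake and weak capacitary bounds---is natural, but as written it does not close, for two reasons.

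\textbf{The $\mathbf{A}_1$ step is not justified.} The Hedberg bound $\mathcal{G}_1*f\lesssim \|f\|_{L^p}^{p/(N+2)}(\mathcal{M}f)^{1-p/(N+2)}$ is an \emph{upper} bound. Knowing that $(\mathcal{G}_1*f)^{\beta\gamma}$ is pointwise dominated by a constant multiple of $(\mathcal{M}f)^{\delta}$ with $\delta<1$ does \emph{not} yield $(\mathcal{G}_1*f)^{\beta\gamma}\in\mathbf{A}_1$: the $\mathbf{A}_1$ condition $\mathcal{M}g\le Cg$ is not inherited under pointwise majorization. There is a genuine Coifman--Rochberg statement for potentials, namely $(\mathcal{G}_1*f)^{\delta}\in\mathbf{A}_1$ whenever $0<\delta<(N+2)/(N+1)$, proved by the usual near/far splitting and the weak-type $L^1\to L^{(N+2)/(N+1),\infty}$ bound for $\mathcal{G}_1$; but note that this threshold does not involve $p$ at all, so it cannot be the source of the restriction $p>(N+2)(\gamma-1)/\gamma$.

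\textbf{The homogeneity in $\operatorname{Cap}(K)$ is wrong.} Carry out your layer-cake computation with $w=(\mathcal{G}_1*f)^\beta$, $\|f\|_{L^p}^p\le 2\operatorname{Cap}_{\mathcal{G}_1,p}(K)$. Using $\operatorname{Cap}_{\mathcal{G}_1,p}(\{\mathcal{G}_1*f>t\})\le t^{-p}\|f\|_{L^p}^p$ for large $t$ and the bound $\mu_{H_2}(\Omega_T)\le [|H_2|]\operatorname{Cap}_{\mathcal{G}_1,p}(\overline{\Omega_T})=C(N,p,T_0)[|H_2|]$ for small $t$, the optimal split at $t_0\sim\operatorname{Cap}_{\mathcal{G}_1,p}(K)^{1/p}$ gives
\[
\int_{\Omega_T}|H_2|\,w\,dxdt\;\le\;C\,[|H_2|]_{\operatorname{Cap}_{\mathcal{G}_1,p}}\,\operatorname{Cap}_{\mathcal{G}_1,p}(K)^{\beta/p},
\]
not $\operatorname{Cap}_{\mathcal{G}_1,p}(K)$. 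Since you need $\beta<p$ for the large-$t$ integral to converge, the exponent $\beta/p<1$, and after dividing by $\operatorname{Cap}_{\mathcal{G}_1,p}(K)$ the bound blows up as $\operatorname{Cap}_{\mathcal{G}_1,p}(K)\to 0$. To obtain the correct power you would need $\beta\ge p$, which is incompatible with any Coifman--Rochberg-type $\mathbf{A}_1$ bound once $p$ or $\gamma$ is moderately large. In short, the single weight $(\mathcal{G}_1*f)^\beta$ cannot simultaneously (i) dominate $\chi_K$, (ii) have $w^\gamma\in\mathbf{A}_1$ with uniform constant, and (iii) satisfy $\int|H_2|w\le C[|H_2|]\operatorname{Cap}(K)$.

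The argument that actually works (and to which the paper defers, see \cite[Proposition~4.24]{55QH2}) does not use a single potential-type weight per compact set. One route is to exploit the dual/Carleson characterization of the capacitary condition (Maz'ya--Verbitsky, Kerman--Sawyer): $[\nu]_{\operatorname{Cap}_{\mathcal{G}_1,p}}<\infty$ is equivalent to a local testing condition on cylinders, and the hypothesis with power weights of the form $w(y,s)=\min\{\rho^{-\alpha},d((y,s),(x_0,t_0))^{-\alpha}\}$ (exactly the weights singled out after Lemma~\ref{1611201410} in the paper) transfers such a local testing condition from $|H_2|$ to $|H_1|$. The exponent restriction $p>(N+2)(\gamma-1)/\gamma$ enters precisely as the range of $\alpha$ for which $w^\gamma\in\mathbf{A}_1$ with uniform constant. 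If you want to salvage a direct argument, replace your global potential weight by this family of truncated power weights and pass through the local Carleson-type characterization of $[\,\cdot\,]_{\operatorname{Cap}_{\mathcal{G}_1,p}}$.
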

     Its proof can be found in \cite[Proof of Propostion 4.24]{55QH2}. Using this Lemma we obtain
     \begin{theorem} \label{1711201410}  Suppose that $A$ is linear. Let $F\in L^1(\Omega_T,\mathbb{R}^N), \mu\in\mathfrak{M}_b(\Omega_T), \sigma\in\mathfrak{M}_b(\Omega)$, set $\omega=|\mu|+|\sigma|\otimes\delta_{\{t=0\}}$. Let $s_0$ be in Theorem \ref{101120143}.
        \begin{description}
        \item[a.] For any $q>1$ and $\mathcal{M}_1[\omega],\mathcal{M}(|F|^2)^{1/2}\in L^q(\Omega_T)$ we find  $\delta=\delta(N,\Lambda_1,\Lambda_2,q)\in (0,1)$ such that if $\Omega$ is a $(\delta,R_0)$-Reifenberg flat domain  and $[A]_{s_0}^{R_0}\le \delta$ for some $R_0>0$ then  there exists a unique weak solution $u\in  L^q(0,T,W_0^{1,q})$ of \eqref{161120143} and there holds                     
                                                                      \begin{align}\label{111120142a}                                    
                                                                                          \left[|\nabla u|^q\chi_{\Omega_T}\right]_{\operatorname{Cap}_{\mathcal{G}_1,q'}}\leq C_1 \left[(\mathcal{M}(|F|^2))^{q/2}\chi_{\Omega_T}\right]_{\operatorname{Cap}_{\mathcal{G}_1,q'}}+ C_1\left[\omega\right]_{\operatorname{Cap}_{\mathcal{G}_1,q'}}^q                             
                                                                                                                  \end{align}
    where $C_1=C_1(N,\Lambda_1,\Lambda_2,q,T_0/R_0,T_0)$.
           \item[b.] For any $1<q<\frac{N+2}{N}$ and $\mathcal{M}_1[\omega], |F|\in L^{q}(\Omega_T)$ we find  $\delta=\delta(N,\Lambda_1,\Lambda_2,q)\in (0,1)$ such that if $\Omega$ is a $(\delta,R_0)$-flat domain  and $[A]_{s_0}^{R_0}\le \delta$ for some $R_0>0$ then there exists a unique weak solution $u\in  L^{q}(0,T,W_0^{1,q}(\Omega))$ of \eqref{161120143} and there holds                            
                                                                                                                                             \begin{align}\label{111120142b}                                    
                                                                                                                                                                                                                                        \left[|\nabla u|^q\chi_{\Omega_T}\right]_{\operatorname{Cap}_{\mathcal{G}_1,q'}}\leq C_2 \left[|F|^q\chi_{\Omega_T}\right]_{\operatorname{Cap}_{\mathcal{G}_1,q'}}+ C_2\left[\omega\right]_{\operatorname{Cap}_{\mathcal{G}_1,q'}}^q                                 
                                                                                                                                                                                                                                                               \end{align}
      where  $C_2=C_2(N,\Lambda_1,\Lambda_2,q,T_0/R_0,T_0)$.
                            
        \end{description}                  
                                                                                                                   \end{theorem}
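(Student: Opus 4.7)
The plan is to derive the capacitary bounds \eqref{111120142a}--\eqref{111120142b} by combining the weighted $L^q$ gradient estimates already obtained in the excerpt (Theorems \ref{101120143}, \ref{161120141}) with Lemma \ref{171120146}, which promotes weighted $L^1$-type inequalities (uniform over $w^\gamma\in\mathbf{A}_1$) into $(\mathcal{G}_1,q')$-capacity bounds. Since $A$ is linear, I would split $u=v_1+v_2$ where $v_1$ solves \eqref{161120143} with $\mu\equiv 0$, $\sigma\equiv 0$ and $v_2$ solves it with $F\equiv 0$; the existence of $v_2$ in $L^q(0,T,W_0^{1,q}(\Omega))$ in the relevant ranges is provided by \cite{55QH2}, which was already invoked in the proof of Theorem \ref{161120141}.

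For the $F$-component I proceed by weighted estimates. In part (a), for any $q>1$ and any $w$ with $w^\gamma\in\mathbf{A}_1$ for some $\gamma$ close to $1$, Theorem \ref{101120143} applied with exponents $(q/2,q/2)$ together with $|\nabla v_1|^q\le(\mathcal{M}(|\nabla v_1|^2))^{q/2}$ gives
\begin{equation*}
\int_{\Omega_T}|\nabla v_1|^q w\,dxdt\le C\int_{\Omega_T}\bigl(\mathcal{M}(|F|^2)\bigr)^{q/2}w\,dxdt.
\end{equation*}
In part (b), where $1<q<(N+2)/N$, I would instead apply Theorem \ref{161120141}(b) with $s=q$ and weights $w^{2+\varepsilon}\in\mathbf{A}_1$ (any $\varepsilon$ small enough that $(N+2)(1+\varepsilon)/(2+\varepsilon)<q'$), obtaining the analogous bound with $|F|^q$ in place of $(\mathcal{M}(|F|^2))^{q/2}$. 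For the $\omega$-component the weighted $L^q$ gradient estimate for the measure-data problem (also from \cite{55QH2}, in the same form used inside the proof of Theorem \ref{161120141}) yields, for the same class of weights,
\begin{equation*}
\int_{\Omega_T}|\nabla v_2|^q w\,dxdt\le C\int_{\Omega_T}\bigl(\mathcal{M}_1[\omega]\bigr)^q w\,dxdt.
\end{equation*}

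Summing the two pieces and applying Lemma \ref{171120146} with the chosen $\gamma$ and $p=q'$ (admissible because $q'>(N+2)(\gamma-1)/\gamma$ by our choice of $\gamma$) yields
\begin{equation*}
[|\nabla u|^q\chi_{\Omega_T}]_{\operatorname{Cap}_{\mathcal{G}_1,q'}}\le C\,[H\chi_{\Omega_T}]_{\operatorname{Cap}_{\mathcal{G}_1,q'}}+C\,[(\mathcal{M}_1[\omega])^q]_{\operatorname{Cap}_{\mathcal{G}_1,q'}},
\end{equation*}
with $H=(\mathcal{M}(|F|^2))^{q/2}$ in part (a) and $H=|F|^q$ in part (b). To reach the form in the statement, I would invoke the potential-theoretic inequality
\begin{equation*}
\bigl[(\mathcal{M}_1[\omega])^q\bigr]_{\operatorname{Cap}_{\mathcal{G}_1,q'}}\le C\,[\omega]_{\operatorname{Cap}_{\mathcal{G}_1,q'}}^{\,q},
\end{equation*}
which is the parabolic Wolff-type bound developed in \cite{55QH2} for the $F\equiv 0$ case.

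The main obstacle I anticipate is twofold: verifying that the weighted $L^q$ estimate for $v_2$ in the measure-data case holds uniformly over $w^\gamma\in\mathbf{A}_1$ with constant depending only on $N,\Lambda_1,\Lambda_2,q,\gamma,[w^\gamma]_{\mathbf{A}_1}$ and $T_0/R_0$ (so that the hypothesis of Lemma \ref{171120146} is actually met), and establishing the final capacitary control of $(\mathcal{M}_1[\omega])^q$ by $[\omega]_{\operatorname{Cap}_{\mathcal{G}_1,q'}}^{\,q}$, which is the genuine potential-theoretic content and rests on a careful dyadic decomposition of $\omega$ against the Bessel parabolic kernel $\mathcal{G}_1$. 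Once these two ingredients are in place, the remaining steps are essentially bookkeeping through the decomposition $u=v_1+v_2$.
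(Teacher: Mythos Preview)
Your proposal is correct and follows essentially the same route as the paper: the paper's proof simply cites the potential-theoretic bound $\left[(\mathcal{M}_1[\omega])^q\chi_{\Omega_T}\right]_{\operatorname{Cap}_{\mathcal{G}_1,q'}}\le c\,[\omega]_{\operatorname{Cap}_{\mathcal{G}_1,q'}}^q$ from \cite[Corollary 4.39]{55QH2} and then invokes Theorems \ref{101120143}, \ref{161120141} together with Lemma \ref{171120146}, which is exactly the combination of ingredients you have identified and correctly unpacked (including the linear decomposition $u=v_1+v_2$ that underlies Theorem \ref{161120141}). The two ``obstacles'' you flag are precisely the external inputs from \cite{55QH2} that the paper takes for granted.
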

                                                                                                                   \begin{proof} We have 
\begin{align*}
\left[(\mathcal{M}_1[\omega])^q\chi_{\Omega_T}\right]_{\operatorname{Cap}_{\mathcal{G}_1,q'}} \leq c \left[\omega\right]_{\operatorname{Cap}_{\mathcal{G}_1,q'}}^q,
\end{align*}                                                where $c=c(N,q,T_0)$, see \cite[Corollary 4.39]{55QH2}. 
Therefore, thanks to Theorem \ref{101120143}, \ref{161120141} and Lemma \ref{171120146}   we get the results.                                          
   \end{proof}\medskip\\             
            \begin{proof}[Proof of Theorem \ref{5hh260320144}]
            By Theorem  \ref{1711201410}, there exists   $\delta=\delta(N,\Lambda_1,\Lambda_2,q)\in (0,1)$  such that $\Omega$ is $(\delta,R_0)$- Reifenberg flat domain  and $[\mathcal{A}]_{s_0}^{R_0}\le \delta$ for some $R_0$ and a sequence
            $\{u_{n}\}_{n}$ 
           obtained by induction of the  weak solutions of
            \begin{align*}
             \left\{
                        \begin{array}
                        [c]{l}%
                        (u_{1})_{t}-\operatorname{div} (A(x,t,\nabla u_1))=\operatorname{div}(F)+\mu~~\text{in }\Omega_T,\\
                         u_1 = 0\quad \text{ on }~ \partial\Omega\times (0,T), \\ 
                         u_1(0)=\sigma~~\text{ in }~\Omega,
                        \end{array}
                        \right.
            \end{align*}           
            and 
              \begin{align*}
                         \left\{
                                    \begin{array}
                                    [c]{l}%
                                    (u_{n+1})_{t}-\operatorname{div} (A(x,t,\nabla u_{n+1}))=|\nabla u_n|^q+\operatorname{div}(F)+\mu~~\text{in }\Omega_T,\\
                                     u_{n+1} = 0\quad \text{ on }~ \partial\Omega\times (0,T), \\ 
                                     u_{n+1}(0)=\sigma~~\text{ in }~\Omega,
                                    \end{array}
                                    \right.
                        \end{align*}                             for any which satisfy
                        \begin{equation}\label{5hh060920140}
                        \left[|\nabla u_{n+1}|^q\chi_{\Omega_T}\right]_{\operatorname{Cap}_{\mathcal{G}_1,q'}}\leq c \left[H_q\right]_{\operatorname{Cap}_{\mathcal{G}_1,q'}}+c\left[|\nabla u_{n}|^q\chi_{\Omega_T}\right]^q_{\operatorname{Cap}_{\mathcal{G}_1,q'}}+ c\left[\omega\right]_{\operatorname{Cap}_{\mathcal{G}_1,q'}}^q ~~\forall n\geq 0, 
                        \end{equation}
                        where $u_0\equiv0$ and  the constant $c$ depends only on $N,\Lambda_1,\Lambda_2,q$ and $T_0/R_0,T_0$. 
                        Since, $u_{n+1}-u_{n}$ is the unique weak solution of 
                                                \begin{equation}\left\{ \begin{array}{l}
                                                                  {u_t} - \operatorname{div}\left( {A(x,t,\nabla u)} \right) = |\nabla u_{n}|^q-|\nabla u_{n-1}|^q~\text{in }~\Omega_T,   \\ 
                                                                  u = 0\quad \quad \text{ on }~~\partial\Omega\times(0,T), \\
                                                                  u(0)=0 \quad \text{ in } \Omega, 
                                                                  \end{array} \right.\end{equation}
 we have
\begin{equation}\label{5hh060920140*}
                        \left[|\nabla u_{n+1}-\nabla u_n|^q\chi_{\Omega_T}\right]_{\operatorname{Cap}_{\mathcal{G}_1,q'}}\leq c\left[||\nabla u_{n}|^q-|\nabla u_{n-1}|^q|\chi_{\Omega_T}\right]^q_{\operatorname{Cap}_{\mathcal{G}_1,q'}}~~\forall n\geq 0. 
                        \end{equation}          
    If
      \begin{align}\label{1711201412}
      \left[H_q\right]_{\operatorname{Cap}_{\mathcal{G}_1,q'}}+\left[\omega\right]_{\operatorname{Cap}_{\mathcal{G}_1,q'}}^q\leq \frac{q-1}{(cq)^{q'}},
      \end{align}
   then from \eqref{5hh060920140} we can show that 
       \begin{align}\label{1711201411}
        \left[|\nabla u_{n}|^q\chi_{\Omega_T}\right]_{\operatorname{Cap}_{\mathcal{G}_1,q'}}\leq cq'\left(\left[H_q\right]_{\operatorname{Cap}_{\mathcal{G}_1,q'}}+\left[\omega\right]_{\operatorname{Cap}_{\mathcal{G}_1,q'}}^q\right)~~\forall n\geq 1.
       \end{align}
   Using H\"older inequality and  \eqref{5hh060920140*} yield 
  \begin{align*}
  &\left[|\nabla u_{n+1}-\nabla u_n|^q\chi_{\Omega_T}\right]_{\operatorname{Cap}_{\mathcal{G}_1,q'}}\leq cq\left[||\nabla u_{n}-\nabla u_{n-1}|(|\nabla u_{n}|^{q-1}+|\nabla u_{n-1}|^{q-1})\chi_{\Omega_T}\right]^q_{\operatorname{Cap}_{\mathcal{G}_1,q'}}\\&~~~~~\leq 
  cq\left[|\nabla u_{n}-\nabla u_{n-1}|^q\chi_{\Omega_T}\right]_{\operatorname{Cap}_{\mathcal{G}_1,q'}}\left[(|\nabla u_{n}|^{q-1}+|\nabla u_{n-1}|^{q-1})^{q'}\chi_{\Omega_T}\right]_{\operatorname{Cap}_{\mathcal{G}_1,q'}}^{q-1}
  \\&~~~~~\leq 
    cq2^{q'-1}\left[|\nabla u_{n}-\nabla u_{n-1}|^q\chi_{\Omega_T}\right]_{\operatorname{Cap}_{\mathcal{G}_1,q'}} \left( \left[|\nabla u_{n}|^q\chi_{\Omega_T}\right]_{\operatorname{Cap}_{\mathcal{G}_1,q'}}^{q-1}+\left[|\nabla u_{n-1}|^q\chi_{\Omega_T}\right]_{\operatorname{Cap}_{\mathcal{G}_1,q'}}^{q-1} \right).
  \end{align*} 
  Hence, by \eqref{1711201412}-\eqref{1711201411} we find                                            
  \begin{align*}
    &\left[|\nabla u_{n+1}-\nabla u_n|^q\chi_{\Omega_T}\right]_{\operatorname{Cap}_{\mathcal{G}_1,q'}}\\&~~~~~~\leq 
      cq2^{q'}(cq')^{q-1}\left(\left[H_q\right]_{\operatorname{Cap}_{\mathcal{G}_1,q'}}+\left[\omega\right]_{\operatorname{Cap}_{\mathcal{G}_1,q'}}^q\right)^{q-1}\left[|\nabla u_{n}-\nabla u_{n-1}|^q\chi_{\Omega_T}\right]_{\operatorname{Cap}_{\mathcal{G}_1,q'}} 
      \\&~~~~~~\leq \frac{1}{2}\left[|\nabla u_{n}-\nabla u_{n-1}|^q\chi_{\Omega_T}\right]_{\operatorname{Cap}_{\mathcal{G}_1,q'}},
    \end{align*}
provided that\begin{align}\label{1711201413}
     \left[H_q\right]_{\operatorname{Cap}_{\mathcal{G}_1,q'}}+\left[\omega\right]_{\operatorname{Cap}_{\mathcal{G}_1,q'}}^q\leq\min\left\{ (cq'2^{q'+1}(cq')^{q-1})^{-\frac{1}{q-1}}, \frac{q-1}{(cq)^{q'}}\right\}.
    \end{align}
      Hence, if \eqref{1711201413} holds,  $ u_n$ converges to $u = u_1+\sum_{n=1}^{\infty}(u_{n+1}-u_n)$ in $L^q(0,T,W^{1,q}_0(\Omega))$ satisfying 
       \begin{equation*}
       \left[|\nabla u|^q\chi_{\Omega_T}\right]_{\operatorname{Cap}_{\mathcal{G}_1,q'}}\leq cq'\left(\left[H_q\right]_{\operatorname{Cap}_{\mathcal{G}_1,q'}}+\left[\omega\right]_{\operatorname{Cap}_{\mathcal{G}_1,q'}}^q\right).
       \end{equation*}                                     Obviously, $u$ is a weak solution of problem \eqref{111120143}. This completes the proof.                           
                                   \end{proof} \medskip\\      \textbf{Acknowledgements:} The author wishes to express his deep gratitude to Professor Laurent V\'eron and Professor Marie-Fran\c{c}oise Bidaut-V\'eron for encouraging, taking care and giving many useful comments during the preparation of the paper.  Besides,  the author would like to thank the anonymous referee for giving many remarks about the papers  \cite{CaPe,Iwa,Mi2,55Mi0}.                            

\end{document}